\documentclass[reqno, 12pt]{amsart} 

\usepackage[expansion=false]{microtype} 
\usepackage{amsfonts,amsthm,amsmath,amssymb,amscd,mathrsfs} 
\allowdisplaybreaks  
\usepackage{latexsym} 
\usepackage{xcolor, color} 
\usepackage[colorlinks=true,linkcolor=blue,citecolor=teal,urlcolor=black]{hyperref} 
\usepackage{graphicx}  
\usepackage{indentfirst} 

\renewcommand{\MR}[1]{}

\usepackage{lmodern} 
\usepackage{soul}     
\usepackage{wasysym}  

\theoremstyle{plain} 
\newtheorem{Thm}{Theorem}[section] 
\newtheorem{Lem}[Thm]{Lemma}     
\newtheorem{Prop}[Thm]{Proposition}

\theoremstyle{definition}

\theoremstyle{remark}
\newtheorem{Rem}[Thm]{Remark}
\numberwithin{equation}{section} 

\newcommand{\beq}{\begin{equation}}            
	\newcommand{\eeq}{\end{equation}}
\newcommand{\ben}{\begin{eqnarray}}         
	\newcommand{\een}{\end{eqnarray}}
\newcommand{\beno}{\begin{eqnarray*}}
	\newcommand{\eeno}{\end{eqnarray*}}

\newcommand{\lt}{\left}
\newcommand{\rt}{\right}

\newcommand{\px}{\partial_x}
\newcommand{\py}{\partial_y}

\newcommand{\qqquad}{\qquad\quad}
\newcommand{\alabel}{\stepcounter{equation}\tag{\theequation}\label}  

\usepackage[
letterpaper,                 
textheight=8.35in,           
headsep=20pt,                
footskip=36pt,               
marginparwidth=0pt,          
marginparsep=0pt,            
left=0.75in, 
right=0.75in
]{geometry}

\begin{document}
	\title[Global bounded solutions for Hillen-Painter models]{Global bounded solutions for a class of generalized Hillen-Painter models near Couette flow in $\mathbb{R}^2$}
	
	\author{Yubo~Chen}
	\address[Yubo~Chen]{School of Mathematical Sciences, Dalian University of Technology, Dalian, 116024,  China}
	\email{1220823215@mail.dlut.edu.cn}
	
	\author{Wendong~Wang}
	\address[Wendong~Wang]{School of Mathematical Sciences, Dalian University of Technology, Dalian, 116024,  China}
	\email{wendong@dlut.edu.cn}
	
	\author{Guoxu~Yang}
	\address[Guoxu~Yang]{School of Mathematical Sciences, Dalian University of Technology, Dalian, 116024,  China}
	\email{guoxu\_dlut@outlook.com}
	
   \author{Yi~Zhang}
   \address[Yi~Zhang]{School of Mathematical Sciences, Dalian University of Technology, Dalian, 116024,  China}
   \email{zysx@mail.dlut.edu.cn}
	
	\begin{abstract}
    We investigate the global well-posedness of a class of generalized Hillen--Painter systems---specifically, supercritical volume-filling chemotaxis models---in $\mathbb{R}^2$ under the influence of Couette flow. It is well established that, in the absence of fluid flow, solutions to this system may develop finite-time singularities (blow-up) for arbitrary initial cell mass. It is  proved that the introduction of a Couette flow with sufficiently large amplitude guarantees the global existence of solutions for all initial masses. By employing a novel frequency decomposition technique, we successfully remove the mass threshold limitation presented in previous studies on the domain $\mathbb{T}\times\mathbb{R}$ (Wang et al., Commun. Contemp. Math.), thereby establishing global regularity in the whole space without any smallness assumptions.

    \end{abstract}

	\maketitle
	
	{\small {\bf Keywords:} Hillen-Painter system; chemotaxis system; stability; Couette flow; blow-up suppression}

    {\small {\bf  Mathematics Subject Classification:} 35B44; 35Q92; 92C17}
	

	\section{Introduction}  
	Consider the following Patlak--Keller--Segel (PKS) system coupled with the Navier--Stokes (NS) equations in $\mathbb{R}^2$:
	\begin{equation}\begin{aligned} \label{eq:main0}
			\left\{\begin{array}{l}
				\partial_t n+v \cdot \nabla n=\nabla\cdot\lt(D(n)\nabla n\rt)-\nabla\cdot\lt(S(n)\nabla c\rt), \\
				- \Delta c=n- \alpha c , \\
				\partial_t v+v \cdot \nabla v+\nabla P=\Delta v+n \nabla \phi, \\
				\nabla \cdot v=0,
			\end{array}\right.
	\end{aligned}\end{equation}
    where $n$ denotes the microorganism density, $c$ the chemoattractant density and $v$ the velocity of the fluid. Moreover, $P$ is the scalar pressure, $\alpha \geq 0$ and $\phi$ is a given potential function accounting the effects of external forces.  For simplicity, assume that $\phi(x, y)=y$.
    Moreover, $D(n)$ represents self-diffusion and $S(n)$ denotes chemotactic sensitivity, which originate from a series of papers of Hillen-Painter (for example, see \cite{HP2001,PH2002}) by  incorporation of a population-sensing (or “quorum-sensing”) mechanism, where they assumed that
    \begin{equation}\label{eq:Qn}
        D(n)=Q(n)-nQ'(n),\quad S(n)=n Q(n),
    \end{equation}
     and $Q(n)$ states that the probability of a jump into a site decreases linearly with the cell density at that site.


    If $v=0$, $\phi=0$, $D\equiv1$ and $S\equiv id$, the system \eqref{eq:main0} is reduced to the classical Patlak--Keller--Segel system, and the origin of the PKS model is a classic narrative of interdisciplinary science derived by Patlak \cite{P1953} and Keller--Segel \cite{KS1970}. 
    For the initial mass $M:=\|n_{\rm in}\|_{L^1}<8\pi$, Blanchet--Dolbeault--Perthame \cite{BDP2006} proved the existence of bounded solutions; when $M=8\pi$, Blanchet--Carrillo--Masmoudi \cite{BCM2008} used the free energy functional to prove that infinite-time aggregation occurs; when $M>8\pi$, the solution blows up with delta Dirac formation, and the relevant proof was provided in \cite{V2002,V2004}. Wei \cite{W2018} unified and extended these results to the critical case, proving that a global mild solution exists if and only if $M\leq8\pi$. For the related progress and higher-dimensional settings, we refer the reader to \cite{N2000,CC2008,CGMN2023} and the references therein.
    
   For $v=0$, the general rates $D$ and $S$ of diffusion and cross-diffusion essentially exhibit asymptotic behavior of the form
    \begin{equation}\label{DS}
        D(n)\simeq n^{p},\quad S(n)\simeq n^{q},\quad n\simeq\infty,
    \end{equation}
    where $p\in\mathbb{R},q\in\mathbb{R}$.
    For the  system of the form (\ref{DS}) in $\mathbb{R}^N$, whether blow-up occurs depends on a critical exponent. It was established in \cite{SK2006} by Sugiyama-Kunii that the critical threshold is $p=q-\frac{2}{N}$: global weak solutions exist when $p>q-\frac{2}{N}$, whereas blow-up may occur for $p\leq q-\frac{2}{N}$ with sufficiently large initial data. The supercritical case $1<p<q-\frac{2}{N}$ is analyzed in  \cite{LS2007} by Luckhaus-Sugiyama, where the solution behaves like ``the Barenblatt solution" asymptotically as $t\rightarrow\infty$. In bounded domains, Winkler--Djie \cite{WD2010} established a similar critical threshold: the solutions are globally bounded when $p>q-\frac{2}{N}$, and finite-time blow-up occurs when $p<q-\frac{2}{N}$. In \cite{DW2024}, Ding-Winkler investigated the asymptotic behavior of the radial solution as $n\rightarrow\infty$ and showed that finite-time blow-up occurs if 
   \ben\label{eq:dingwinkler}
    \liminf_{\xi\rightarrow \infty}\frac{S(\xi)}{D(\xi)\xi^{\frac{2}{N}}}>0.
    \een
    For more references on this topic, we refer to Tao--Winkler \cite{TW2025} and the references therein.

   As shown in \cite{DCCGK2004},  chemotaxis typically occurs in fluid environments and is subject to fluid advection. We focus on whether the mixing effect of fluid flow can suppress finite-time blow-up. First, substantial progress has been made in using fluid flows to suppress chemotactic blow-up via fluid advection. Bedrossian--He \cite{BH2017} studied the suppression of blow-up by strong shear flows in $\mathbb{T}^d(d=2\;\rm or\;3)$: in the 3D case, when $M<8\pi$, a sufficiently strong shear flow can suppress solution blow-up; however, for $M>8\pi$, finite-time blow-up solutions still exist. Kiselev--Xu \cite{KX2016} provided a blow-up example and demonstrated that blow-up can be suppressed by stationary relaxation-enhancing flows and Yao--Zlatos flows in $\mathbb{T}^d(d=2\;\rm or\;3)$. He \cite{H2018} studied the suppression of blow-up by using a large strictly monotone shear flow on $\mathbb{T}\times\mathbb{R}$. Deng--Shi--Wang \cite{DSW2025} considered the blow-up suppression in $\mathbb{R}^3$ by Green's method.
    Second, for the 2D PKS system coupled with the fluid equations, Zeng--Zhang--Zi \cite{ZZZ2021} studied the PKS–NS system near Couette flow in $\mathbb{T}\times\mathbb{R}$. Li--Xiang--Xu \cite{LXX2025} suppressed the blow-up for the PKS--NS system via the Poiseuille flow in the same domain. Hu--Kiselev \cite{HK2024} suppressed blow-up in a PKS–coupled Stokes--Boussinesq system by using strong buoyancy effects. In 2D whole space, Chen--Wang--Yang \cite{CWY2025b} investigated the suppression of blow-up of solutions to the PKS(--NS) system by a large Couette flow, where they established a precise relationship between the amplitude $A$ of the Couette flow and the initial data. Moreover, for the suppression of blow-up for the 3D PKS--NS system via Couette flow, we refer to the following works: Cui--Wang--Wang \cite{CWW2025} and Cui--Wang--Wang--Wei--Yang \cite{CWWWY2025} in $\mathbb{T}\times\mathbb{I}\times\mathbb{T}$; Cui--Wang--Wang \cite{CWW2024} in $\mathbb{T}\times\mathbb{R}\times\mathbb{T}$; Cui--Wang--Wang--Wei \cite{CWWW2025} in $\mathbb{T}^3$ and the references therein.
    
   Motivated by the above progress, a natural question is whether the shear flow can suppress the blow-up of the supercritical system  \eqref{eq:main0} when $Q(n)$ behaves as in \eqref{eq:Qn}. By assuming that the chemotactic
response is switched off at high cell densities, Hillen-Painter in \cite{HP2001} showed that the response to high population
densities prevents overcrowding, and they proved local and global existence in time of
classical solutions when $Q(n)=1-n$.
   For $Q(n)=n+1$, Wang--Wang--Zhang \cite{WWZ2024} proved that a sufficiently large-amplitude shear flow can suppress blow-up of solutions in $\mathbb{T}\times\mathbb{R}$, while this result  holds for $M<\frac{2\pi}{\sqrt{3}}$. However, the sharp critical mass threshold is still unknown. Next we investigate this issue and focus on the system \eqref{eq:main0} with a general form of  $Q(n)=\kappa n+\mu$  on $\mathbb{R}^2$, and we call it \textbf{a generalized Hillen-Painter model}.
   Introduce the Couette flow, which generates horizontal advection and induces vertical diffusion. It is easy to verify that $(\tilde{n}, \tilde{c}, \tilde{v})=(0,0, U)$ with the Couette flow $U=(A y, 0)$ is a stationary solution to the system \eqref{eq:main0}. Denote the perturbation $u(t, x, y)=v(t, x, y)-U(y)$, then $(n, c, u)$ satisfies
    \begin{equation}\begin{aligned} \label{eq:main1}
    		\left\{\begin{array}{l}
    			\partial_t n+A y \partial_x n+u \cdot \nabla n-\Delta n=-\nabla \cdot \lt[n \lt(\kappa n +\mu\rt) \nabla c\rt], \\
    			-\Delta c = n - \alpha c, \\
    			\partial_t u+A y \partial_x u+\binom{A u_2}{0}+u \cdot \nabla u-\Delta u+\nabla p=\binom{0}{n}, \\
    			\nabla \cdot u=0,\\
                \left.(n, \, u)\right|_{t=0}= (n_{\mathrm{in}}, \, u_{\mathrm{in}} ),
    		\end{array}\right.
    \end{aligned}\end{equation}
    where $\kappa,\mu$ are constants. Define
    $$
    \omega = \partial_y u_1-\partial_x u_2, \quad u=\nabla^{\perp} \Phi=\left(\partial_y \Phi,-\partial_x \Phi\right),
    $$
    then
    $
    \Delta \Phi=\omega
    $
    and $\omega$ satisfies
    $$
    \partial_t \omega+A y \partial_x \omega-\Delta \omega+u \cdot \nabla \omega=-\partial_x n .
    $$
    After the time rescaling $t \mapsto A^{-1} t$, one rewrites the system \eqref{eq:main1} by
    \begin{equation}\begin{aligned} \label{eq:main}
    		\left\{\begin{array}{l}
    			\partial_t n+y \partial_x n-\frac{1}{A} \Delta n=-\frac{1}{A}(\nabla \cdot \lt[n \lt(\kappa n + \mu\rt) \nabla c\rt]   +   u \cdot \nabla n), \\
    			- \Delta c=  n - \alpha  c, \\
    			\partial_t \omega+y \partial_x \omega-\frac{1}{A} \Delta \omega=-\frac{1}{A}\left(\partial_x n+u \cdot \nabla \omega\right), \\
    			u=\nabla^{\perp} \Delta^{-1} \omega ,
                \\
                \left.(n, \, u)\right|_{t=0}= (n_{\mathrm{in}}, \, u_{\mathrm{in}} ).
    		\end{array}\right.
    \end{aligned}\end{equation}
    
    \subsection{Main result}
    To state the main theorem, we first introduce some notation. Denote $D_x := -i \partial_x$, understood as a Fourier multiplier operator. All norms involving $D_x$ are defined on the Fourier side and $\langle \cdot \rangle := \sqrt{1 + (\cdot)^2}$. For $t \geq 0$ and certain smooth enough $f$, define the anisotropic norm
    \begin{align*}
    	\|f\|_{Y_{m, \epsilon}} := \| \langle D_x\rangle^m\langle\frac{1}{D_x}\rangle^\epsilon f\|_{ L^2},
    \end{align*}
    and the space-time norm
    \begin{align*}
    	\|f\|_{X_a}^2&:= \|  e^{a A^{-\frac{1}{3}}\left|D_x\right|^{\frac{2}{3} }t}  f\|_{L^\infty L^2}^2      + \frac{1}{A} \|e^{a A^{-\frac{1}{3}}\left|D_x\right|^{\frac{2}{3} }t}  \nabla f\|_{L^2 L^2}^2   \\
    	& \quad + \frac{1}{ A^{\frac{1}{3}}}\|e^{a A^{-\frac{1}{3}}\left|D_x\right|^{\frac{2}{3} } t } \left|D_x\right|^{\frac{1}{3}} f\|_{L^2 L^2}^2  +   \|e^{a A^{-\frac{1}{3}}\left|D_x\right|^{\frac{2}{3} }t }  \partial_x \nabla  \Delta^{-1} f \|_{L^2 L^2}^2.
    \end{align*}

    \begin{Thm} \label{thm:PKSNS}
    	  For $0<\epsilon<\frac12<m$, assume that the initial data $u_{\rm in} \in H^2(\mathbb{R}^2)$, $0\leq n_{\mathrm{in}} \in L^\infty(\mathbb{R}^2) \cap L^1(\mathbb{R}^2)$, and $\omega_{\mathrm{in}} = \operatorname{curl} u_{\mathrm{in}}$ satisfying $\|\omega_{\rm in}\|_{Y_{m, \epsilon}}  + \| \langle D_x \rangle^{1/3} n_{\rm in}\|_{Y_{m, \epsilon}} <\infty.$
    	There exists a positive constant $C^*$ depending on $\epsilon, m, \kappa, \mu$ and $\alpha$ 
    	such that if  
    	$$
    	A>C^*  \left( \|n_{\mathrm{in}}\|_{L^\infty}^2\chi_{|\kappa| + |\mu| > 0}  + M^2 \chi_{|\mu| > 0} + \|\omega_{\rm in}\|_{Y_{m, \epsilon}}^2  + \| \langle D_x \rangle^\frac13 n_{\rm in}\|_{Y_{m, \epsilon}}^2    +1\right)^{3 + 12\chi_{|\kappa|>0} + 6 \chi_{|\mu|>0} - 3\chi_{|\kappa||\mu|>0}},
    	$$
    	where \(\chi\) denotes the characteristic function, the solutions to \eqref{eq:main} are global in time satisfying
    	\begin{gather*}
    		 \|  \langle D_x\rangle^m  \langle \frac{1}{D_x}\rangle^\epsilon   \omega \|_{X_a}  +  \|  \langle D_x\rangle^{m +\frac13  }  \langle \frac{1}{D_x}\rangle^\epsilon   n \|_{X_a}    
    			\leq  C\lt(\|\omega_{\rm in}\|_{Y_{m, \epsilon}}  + \| \langle D_x \rangle^\frac13 n_{\rm in}\|_{Y_{m, \epsilon}}+ 1 \rt) , \\
            \|n\|_{L^\infty L^\infty} \leq C\lt( \|n_{\mathrm{in}}\|_{L^\infty}^6 + M^6 + \|\omega_{\rm in}\|_{Y_{m, \epsilon}}^6  + \| \langle D_x \rangle^\frac13 n_{\rm in}\|_{Y_{m, \epsilon}}^6  +1\rt)
    	\end{gather*}
    	for all $t\geq0$, where $0<a<\frac{1}{16(1+2 \pi)}$ and $C$  depends on $\epsilon, m, \kappa, \mu$ and $\alpha$. Additionally, $\epsilon>\frac13$ is required when $\alpha= 0$.
    \end{Thm}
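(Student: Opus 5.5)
The proof will be a bootstrap (continuity) argument built on the space--time norm $X_a$, which encodes enhanced dissipation at rate $A^{-1/3}|D_x|^{2/3}$ together with inviscid damping. Work on the Fourier side in $x$ only: for each frequency $k\neq 0$ treat the linearized operator $\partial_t + y\partial_x - A^{-1}\Delta$. The first step is a linear estimate. For $\partial_t f + y\partial_x f - A^{-1}\Delta f = -A^{-1}(\partial_x g_1 + \partial_y g_2 + g_3)$, show that
\[
\|e^{aA^{-1/3}|D_x|^{2/3}t} f\|_{X_a}\lesssim \|f_{\rm in}\|_{L^2} + \text{(suitable norms of } g_i),
\]
uniformly in $k$. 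This estimate would come from a time-dependent Fourier multiplier (ghost-weight) energy method that captures the $A^{-1/3}|k|^{2/3}$ decay and the $\|\partial_x\nabla\Delta^{-1}f\|_{L^2L^2}$ damping term. Then multiply by $\langle k\rangle^m\langle 1/k\rangle^\epsilon$. Because $m>1/2$, the weighted norm controls $\ell^1$ in $k$, which gives the algebra/product estimates needed for the nonlinear terms. Because $\epsilon>0$, the $k$ near $0$ region is handled. The zero mode $k=0$ has no enhanced dissipation, so it will be treated separately by plain heat-type energy estimates in $y$.

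Next, set up the bootstrap. Assume on $[0,T]$ that
\[
\|\langle D_x\rangle^m\langle D_x^{-1}\rangle^\epsilon\omega\|_{X_a}+\|\langle D_x\rangle^{m+1/3}\langle D_x^{-1}\rangle^\epsilon n\|_{X_a}\le 2E_1
\qquad\text{and}\qquad
\|n\|_{L^\infty L^\infty}\le 2E_\infty ,
\]
with $E_1\sim$ (data $+1$) and $E_\infty\sim$ (data$)^6$. Apply the linear estimate with sources $u\cdot\nabla n$, $u\cdot\nabla\omega$, $\partial_x n$ and the chemotactic term $\nabla\cdot[n(\kappa n+\mu)\nabla c]$. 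The forcing $\partial_x n$ in the $\omega$ equation is absorbed using the extra $\langle D_x\rangle^{1/3}$ on $n$ together with the dissipation term $A^{-1/3}\||D_x|^{1/3}\cdot\|_{L^2L^2}$. For the quadratic transport terms, put $u$ in $L^\infty$ or in the damped norm and gain a factor $A^{-1/3}$ or better. For the chemotactic term, split it into the $\mu$ part $\mu n\nabla c$ (quadratic) and the $\kappa$ part $\kappa n^2\nabla c$ (cubic). Estimate $\nabla c=\nabla(-\Delta+\alpha)^{-1}n$ by elliptic estimates; when $\alpha=0$ the low $x$-frequencies of $\nabla\Delta^{-1}$ are singular, and this is exactly where $\epsilon>1/3$ is needed. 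Control $n$ in $L^\infty$ by $E_\infty$ and in $L^1$ by $M$; the cubic term costs one more factor of $E_\infty$. Each nonlinear contribution carries a negative power of $A$, so the largeness condition, with exponents depending on which of $\kappa,\mu$ vanish, closes the estimate to $E_1$.

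The $L^\infty$ bound is the step I expect to be the main obstacle, because the zero mode $n_0(t,y)$ does not enjoy enhanced dissipation and carries the full mass $M$. My plan is to decompose $n=n_0+n_{\neq}$. For $n_{\neq}$, use the enhanced-dissipation bound plus the anisotropic embedding $\|f\|_{L^\infty}\lesssim\|\langle D_x\rangle^m f\|_{L^2_xH^1_y}$-type interpolation. For $n_0$, which satisfies a 1D equation $\partial_t n_0-A^{-1}\partial_y^2 n_0=-A^{-1}\partial_y(\ldots)_0$, the self-interaction $\partial_y(n_0(\kappa n_0+\mu)\partial_y c_0)$ is the supercritical piece. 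In 1D it is subcritical, since $\partial_y c_0$ is bounded by $M$ and the structure $n_0\le\|n\|_\infty$ gives a Gagliardo--Nirenberg closing. The mixed terms involving $n_{\neq}$ decay fast in time. Run a Moser--Alikakos $L^{2^j}$ iteration on the full equation, using $\nabla\cdot u=0$. Bound $\|\nabla c\|_{L^\infty}$ by $\|n\|_{L^1\cap L^\infty}$ for the $\mu$ term, and treat the $\kappa n^2$ term with the damping available from the $X_a$ norm. This yields $\|n\|_{L^\infty}\le C(\text{data})^6\le E_\infty$ once $A$ is large. Together with local well-posedness and a blow-up criterion in these norms, continuity then gives $T=\infty$.
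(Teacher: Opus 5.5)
Your overall architecture matches the paper: a ghost-weight linear estimate, a two-quantity bootstrap, absorbing $\partial_x n$ through the extra $|D_x|^{1/3}$, $\epsilon>1/3$ for $\alpha=0$, and Moser--Alikakos for $L^\infty$. But there are two genuine gaps.

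\textbf{First gap: the zero-mode step.} The step you single out as the main obstacle relies on a mechanism that does not exist here, and the mechanism the paper actually uses is missing. In $\mathbb{R}^2$ the horizontal frequency $k$ is continuous and $\{k=0\}$ is a null set. Moreover $\int n\,dx$ is not an additive piece of $n$: a nonzero $x$-independent function lies in neither $L^1$ nor $L^2(\mathbb{R}^2)$. So the splitting $n=n_0+n_{\neq}$ is not available.

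Even as a heuristic borrowed from $\mathbb{T}\times\mathbb{R}$, your 1D claim is false: the zero-mode problem is mass-\emph{critical}, not subcritical. Testing against $n_0$ produces $\frac{2\kappa}{3A}\int n_0^4$. With $|\partial_y c_0|\lesssim M$ and $\|n_0\|_{L^4}^2\lesssim M\|\partial_y n_0\|_{L^2}$, the flux is bounded by $CM^2A^{-1}\|\partial_y n_0\|_{L^2}^2$. This is scale-invariant and can be absorbed only for small $M$; it is precisely the origin of the threshold $M<2\pi/\sqrt3$ that the theorem removes. Using $n_0\le 2K_\infty$ instead is circular: a mode without enhanced dissipation gives a bound growing with $K_\infty$ and carrying no negative power of $A$.

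What you need is the whole-space observation that, since $0<\epsilon<\frac12$, $\||k|^{-1/3}\langle k\rangle^{-m}\langle 1/k\rangle^{-\epsilon}\|_{L^2_k}<\infty$. Hence $\|n\|_{L^\infty_xL^2_y}\lesssim\|\langle D_x\rangle^m\langle D_x^{-1}\rangle^\epsilon|D_x|^{1/3}n\|_{L^2}$, whose $L^2_t$ norm is at most $A^{1/6}$ times the $X_a$ norm. So \emph{all} of $n$, mass included, is controlled by the enhanced-dissipation term.

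The paper uses this exactly at the one level where Nash's inequality cannot close without smallness, namely the $L^4$ bound. There the nonlinear term and the Nash dissipation are both quadratic in $\|n^2\|_{L^2}^2$, whereas for $p>2$ the nonlinear power is $\frac{p}{p-1}<2$. In Step I of Lemma~\ref{lem:n L infty L infty}, the term $\frac{C}{A}(\kappa^2\|n\|_{L^\infty}^4+\mu^2\|n\|_{L^\infty}^2)\|n\|_{L^\infty_xL^2_y}^2\|\nabla c\|_{L^2_xL^\infty_y}^2$ becomes time-integrable with a factor $A^{-2/3}$ ($A^{-1/3}$ if $\alpha=0$). Thus $K_\infty$ re-enters only with a negative power of $A$. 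Your embedding for $n_{\neq}$ through $L^2_xH^1_y$ would fail anyway, since $X_a$ controls $\nabla n$ only in $L^2_t$, not in $L^\infty_t$.

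\textbf{Second gap: the cubic term in $X_a$.} You plan to spend one factor of $n$ in the supercritical flux $\kappa\nabla\cdot(n^2\nabla c)$ through $\|n\|_{L^\infty}\le 2E_\infty$. That does not work. The weight $e^{aA^{-1/3}|D_x|^{2/3}t}\langle D_x\rangle^{m+1/3}\langle D_x^{-1}\rangle^{\epsilon}$ is a horizontal Fourier multiplier. Its Gevrey-type factor is only sub-multiplicative, and $\langle 1/k\rangle^{\epsilon}$ is large at low output frequency. So multiplication by a function known only to be bounded does not preserve the weighted norm, and every factor in the product must carry the weight.

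The paper's new ingredient is Lemma~\ref{lem:a preliminary step}. The weighted convolution $\int\|n_{k-l-\eta}\|_{L^2_y}\|n_\eta\|_{L^\infty_y}\,d\eta$ is split into the near-resonant, high--low and low--high regions. Then $\|n_\eta\|_{L^\infty_y}\le\|n_\eta\|_{L^2_y}^{1/2}\|\partial_y n_\eta\|_{L^2_y}^{1/2}$, together with a transfer of $|\eta|^{1/6}$, converts it into the two dissipative pieces $\||D_x|^{1/3}n\|$ and $\|\partial_y n\|$ of $X_a$. The same trichotomy is applied to the outer product with $\nabla_l c_l\in L^1_lL^\infty_y$, which needs $\epsilon>0$. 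In the low--high region, the weight $\langle 1/k\rangle^{2\epsilon}$ at the smallest frequency is paid by $\|(\langle 1/k\rangle^{\epsilon}+\langle 1/k\rangle^{1/2-\epsilon})\langle k\rangle^{-m}\|_{L^2_k}<\infty$, which needs $\epsilon<\frac12$. The result, $\frac{|\kappa|}{A^{1/6}}\|\langle D_x\rangle^{m+1/3}\langle D_x^{-1}\rangle^\epsilon n\|_{X_a}^4$, uses no $L^\infty$ information. So the $X_a$ bootstrap closes by itself, and $K_\infty$ is afterwards computed from $K$.
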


     \begin{Rem} It is worth noting that the supercritical volume-filling chemotaxis system ($\kappa>0$) can exhibit blow-up for arbitrarily initial mass in the absence of fluid flow (as established in Theorem 1.2 \cite{WYZ2026}). Here we investigated a generalized Hillen--Painter type model coupled with the Navier--Stokes equations under the influence of a strong Couette flow. For $\kappa=\mu=\alpha =1$, Wang--Wang--Zhang \cite{WWZ2024} had proven that a sufficiently large Couette flow can suppress blow-up of \eqref{eq:main} in $\mathbb{T}\times\mathbb{R}$, where  $M < \frac{2\pi}{\sqrt{3}}$ is necessary. In fact, the zero mode $n_0$ defined by
		$P_0n=n_0=\frac{1}{|\mathbb{T}|}\int_{\mathbb{T}}n(t,x,y)dx$ satisfying
        \beno
        \dfrac{d}{dt}\| n_0\|_{L^2}^2+\dfrac{2}{A}\|\partial_yn_0\|_{L^2}^2=\dfrac{2}{3A}\int_{\mathbb{R}}n_0^4dy+\cdots
        \eeno
        with the Gagliardo-Nirenberg inequality
		$$
			\|n_{0}\|_{L^{4}}\leq C_{*}\|n_{0}\|_{L^{1}}^{\frac12}\|\partial_{y}n_{0}\|_{L^{2}}^{\frac12}$$
     implies that $\|n_{0}\|_{L^{1}}$ must be small (for full details, see Lemma 3.6 in \cite{WWZ2024}), which seems necessary.
        In contrast, our result in the whole space $\mathbb{R}^2$ removes this restriction on the initial mass $M$. It seems surprising that the critical mass threshold may depend on the different domains.
    \end{Rem}

     \begin{Rem} \label{rem:domain}
        To handle horizontal convolutions in nonlinear terms, we adopt the frequency decomposition strategy from \cite{LLZ2025}. Specifically, we partition the integration domain into three regions (for example, see  \eqref{eq:Fourier}) based on the interactions among $k$, $l$, and $k-l$:
\begin{itemize}
    \item \textbf{Near-resonant region} ($\frac{1}{2}|k-l| \leq |k| \leq 2|k-l|$): here $|k| \sim |k-l|$, and $|l|$ represents the lowest frequency;
    \item \textbf{High-low interaction} ($|k| > 2|k-l|$): corresponds to $|k| \sim |l|$, with $|k-l|$ being the lowest frequency;
    \item \textbf{Low-high interaction} ($2|k| < |k-l|$): corresponds to $|k-l| \sim |l|$, with $|k|$ being the lowest frequency.
\end{itemize}
This decomposition facilitates precise estimates by exploiting the relative scales of the frequencies in each region. The essential point is that, in the whole-space setting, the horizontal
low-frequency singularity is compensated by the weighted factor
$\langle D_x^{-1}\rangle^\epsilon$ and by a trichotomy of horizontal
frequency interactions. Consequently, one can avoid the separate
zero-mode analysis which, in the domain $\mathbb T\times\mathbb R$,
leads to a mass restriction.
    \end{Rem}
    
    \begin{Rem} \label{rem:Y space}
        Define the space $Y_{\epsilon} := \big\{ f : \| \langle D_x^{-1} \rangle^\epsilon f \|_{L_x^2} < \infty \big\}$ with a small parameter $\epsilon > 0$. Compared to $L_x^2$, the space $Y_{\epsilon}$ requires additional regularity in the $x$-direction, specifically imposing a stricter constraint on the low-frequency Fourier modes. Assume the asymptotic behavior
\begin{align*}
    |\hat{f}(k)| \sim |k|^s \quad \text{as} \quad |k| \to 0,
\end{align*}
inclusion in $Y_{\epsilon}$ requires $s > \epsilon - \frac12$. This condition is strictly stronger than the standard $L^2$-integrability threshold $s > -\frac12$.
    \end{Rem}

    \begin{Rem}
        In Theorem \ref{thm:PKSNS}, $\epsilon$ needs an additional lower bound ${1}/{3}$ when $\alpha =0$, which allows us to gain additional integrability
		\begin{equation*}
			\||k|^{-\frac{5}{6}}\langle k\rangle^{-m}\langle\frac{1}{k}\rangle^{-\epsilon}\|_{L_k^2} < \infty
		\end{equation*}
		(for example, see (3.7) in \cite{CWY2025b} ) and thereby formally control $ \| \nabla^2 c \|_{L^2}$ with the help of  $ \| n\|_{L^2}$.
    \end{Rem}

    \begin{Rem}
		As seen in Theorem \ref{thm:PKSNS}, the initial $n$ requires $1/3$ more regularity in $x$ compared to $\omega$. This is a consequence of the derivative transfer $|D_x|^{1/3}$ in \eqref{eq:px n omega}, analogous to the handling of the buoyancy term in \cite{CWY2025a}.
	\end{Rem}


    \subsection{Some notations and outlines}
    
    Here are some notations used in this paper.
    
    \noindent\textbf{Notations}:
    \begin{itemize}
    	\item For a given function $f(x,y)$ on $\mathbb{R}^2$, its $k$-th horizontal Fourier modes can be defined by
    	\begin{equation} \label{eq:def of fk}
    		f_k(y)=\mathcal{F}_{x \rightarrow k}(f)(k, y) = (2\pi)^{-\frac12}\int_{\mathbb{R}} f(x, y) \mathrm{e}^{- ikx} dx,
    	\end{equation}
    	In addition, denote 
    	$$
    	\hat{f}_k(\xi)=\hat{f}(k, \xi) = \mathcal{F}_{y \rightarrow \xi}(f_k)(y) = (2\pi)^{-1}  \int_{\mathbb{R}^2} f(x, y) \mathrm{e}^{- i(kx + \xi y)} dxdy.
    	$$
        \item Denote $\nabla_l :=(l,\, \py)$ and $\Delta_l := \py^2 - l^2 $.
    	\item The space norm $\|f\|_{L^{p}}$ is defined by	
    	$\|f\|_{L^{p}(\mathbb{R}^2)}=\left(\int_{\mathbb{R}^2}|f|^p dxdy\right)^{{1}/{p}}  $. For $t\geq0$, the time-space norm $\|f\|_{L^{q}L^{p}}$ is defined by	
    	$\|f\|_{L^qL^p}=\|\|f\|_{L^p(\mathbb{R}^2)}\|_{L^q(0,t)}$.
    	For simplicity, we write $\|f\|_{L^p(\mathbb{R}^2)}$ as $\|f\|_{L^p}$ and write $\| (f,g)\|_{L^p} = \sqrt{\|f\|_{L^p}^2 + \|g\|_{L^p}^2 }$. 
    	\item Denote $(f | g)$ by the $L^2\left(\mathbb{R}^2\right)$ inner product of $f$ and $g$.
        \item Denote $C$ by  a positive constant independent of $A$, $t$ and the initial data, and it may be different from line to line. $A \lesssim B$ means there exists an absolute constant $C$, such that $A \leq C B$.
    \end{itemize}
    
    \noindent\textbf{Outlines}:
    The paper is organized as follows. In Section \ref{Sec.2}, we introduce the constructed multipliers, outline the main proof strategy and present Proposition \ref{main prop}, which is important for the proof of Theorem \ref{thm:PKSNS}. Combining this proposition with the local well-posedness of the system, we complete the proof of Theorem \ref{thm:PKSNS}. In Section \ref{sec.3}, we establish a series of key lemmas to close the energy estimates, thereby providing the proof of Proposition \ref{main prop}.    
    

    \section{Preliminaries and proofs of the main theorem} \label{Sec.2}
    
    \subsection{Construction of the multipliers}
    For $k,\xi \in \mathbb{R}$ and $A >0$, denote that 
    \begin{equation*}
    	\begin{aligned}
    		\mathcal{M}_1(k, \xi) & := \arctan \left(A^{-\frac{1}{3}}|k|^{-\frac{1}{3}} \operatorname{sgn}(k) \xi\right)+ \frac{\pi}{2},\\
    		\mathcal{M}_2(k, \xi) & := \arctan \left(\frac{\xi}{k}\right)+\frac{\pi}{2}
    		.
    	\end{aligned}
    \end{equation*}
    Then, these two self-adjoint Fourier multipliers satisfy
    \begin{equation} \label{eq:bound of M}
    	1\leq \mathcal{M}:= \mathcal{M}_1+\mathcal{M}_2
    	+1 \leq 1+ 2\pi.
    \end{equation}

    The enhanced dissipation multiplier $\mathcal{M}_1$ and the inviscid damping multiplier $\mathcal{M}_2$ are inspired by \cite{BGM2017,DWZ2021,WZ2023}. On the Fourier side, these multipliers act as `ghost weights'---bounded weights that provide additional dissipation properties. A crucial feature of these multipliers is the following identity:
    \begin{equation} \label{eq:crucial M}
    	2 \Re\big(\left(\partial_t+y \partial_x\right) f \mid \mathcal{M}_i f\big)=\frac{d}{d t}\|\sqrt{\mathcal{M}_i} f\|_{L^2}^2+\int_{\mathbb{R}^2}  k \partial_{\xi}  \mathcal{M}_i(k, \xi)|\hat{f}|^2 d k d \xi,
    \end{equation}
    which holds for any sufficiently smooth function $f= f(t, x, y)$ on $(0, +\infty) \times \mathbb{R}^2$.
    \begin{Lem} \label{lem:M12}
    	For any smooth enough function $f= f(t, x ,y)$ on $(0, +\infty) \times \mathbb{R}^2$ and Fourier multiplier $\mathcal{M}$ defined in \eqref{eq:bound of M}, it holds that
    	\begin{equation}  \label{eq:m}
    		\int_{\mathbb{R}^2}  k \partial_{\xi}   \mathcal{M}(k, \xi)|\hat{f}|^2 d k d \xi 
    		\geq \frac{1}{4 A^{\frac{1}{3}}}\|\left|D_x\right|^{\frac{1}{3}} f\|_{L^2}^2-\frac{1}{2A}\|\partial_y f\|_{L^2}^2 + \|\partial_x \nabla \Delta^{-1} f\|_{L^2}^2 
    		.
    	\end{equation}
    \end{Lem}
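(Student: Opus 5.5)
Since $\mathcal{M}=\mathcal{M}_1+\mathcal{M}_2+1$ and the constant part has zero $\xi$-derivative, we split
\[
\int k\,\partial_\xi\mathcal{M}\,|\hat f|^2 = \int k\,\partial_\xi\mathcal{M}_1\,|\hat f|^2+\int k\,\partial_\xi\mathcal{M}_2\,|\hat f|^2
\]
and treat the two pieces separately, pointwise in $(k,\xi)$.

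\textbf{The $\mathcal{M}_2$ part.} A direct computation gives $\partial_\xi\arctan(\xi/k)=\frac{k}{k^2+\xi^2}$. Hence
\[
k\,\partial_\xi\mathcal{M}_2=\frac{k^2}{k^2+\xi^2}.
\]
By Plancherel, this integrates exactly to $\|\partial_x\nabla\Delta^{-1}f\|_{L^2}^2$, since the symbol of $\partial_x\nabla\Delta^{-1}$ has squared modulus $k^2(k^2+\xi^2)/(k^2+\xi^2)^2$. This term is therefore an identity and needs no estimate.

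\textbf{The $\mathcal{M}_1$ part.} Set $\beta=A^{-1/3}|k|^{-1/3}$. Then
\[
\partial_\xi\mathcal{M}_1=\frac{\beta\,\operatorname{sgn}(k)}{1+\beta^2\xi^2},
\qquad
k\,\partial_\xi\mathcal{M}_1=\frac{A^{-1/3}|k|^{2/3}}{1+A^{-2/3}|k|^{-2/3}\xi^2}\ge 0.
\]
It remains to prove the pointwise inequality
\[
\frac{A^{-1/3}|k|^{2/3}}{1+A^{-2/3}|k|^{-2/3}\xi^2}\ \ge\ \frac{1}{4}A^{-1/3}|k|^{2/3}-\frac{1}{2A}\xi^2 .
\]
We argue by cases. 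If $A^{-2/3}|k|^{-2/3}\xi^2\le 1$, the left side is at least $\tfrac12 A^{-1/3}|k|^{2/3}$, which already exceeds the right side. If $A^{-2/3}|k|^{-2/3}\xi^2>1$, then $\xi^2>A^{2/3}|k|^{2/3}$, so
\[
\frac{1}{2A}\xi^2>\frac12 A^{-1/3}|k|^{2/3},
\]
which makes the right side negative while the left side is nonnegative. (The factor $\tfrac14$ leaves room to spare; $\tfrac12$ would work too.)

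\textbf{Conclusion.} Multiplying the pointwise bound by $|\hat f|^2$, integrating, and applying Plancherel ($|k|^{2/3}\leftrightarrow |D_x|^{1/3}$, $\xi^2\leftrightarrow\partial_y^2$) yields \eqref{eq:m}.

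The only care needed is the sign bookkeeping for $k<0$: the $\operatorname{sgn}(k)$ in $\mathcal{M}_1$ and the $1/k$ inside $\mathcal{M}_2$ are exactly what make $k\,\partial_\xi\mathcal{M}_i\ge0$, and the measure-zero set $k=0$ contributes nothing. No step presents a real obstacle.
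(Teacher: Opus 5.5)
Your proof is correct and follows the paper's argument: compute $k\,\partial_\xi\mathcal{M}_1$ and $k\,\partial_\xi\mathcal{M}_2$ explicitly, bound the first pointwise, and identify the second with $\|\partial_x\nabla\Delta^{-1}f\|_{L^2}^2$ via Plancherel. Your case split on whether $A^{-2/3}|k|^{-2/3}\xi^2\le 1$ supplies the justification of the pointwise inequality, which the paper states without proof.
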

    \begin{proof}
    	Direct calculation shows that
    	$$
    	k \partial_{\xi} \mathcal{M}_1(k, \xi)=\frac{A^{-\frac{1}{3}}|k|^{\frac{2}{3}}}{1+A^{-\frac{2}{3}}|k|^{-\frac{2}{3}}|\xi|^2} \geq \frac{1}{4 A^{\frac{1}{3}}  }  |k|^{\frac{2}{3}}-\frac{1}{2A}|\xi|^2
    	$$
    	and
    	$$
    	k \partial_{\xi} \mathcal{M}_2(k, \xi)=\frac{k^2}{k^2+\xi^2},
    	$$
    	which imply \eqref{eq:m}.
    \end{proof}

    \subsection{A key proposition and some useful lemmas}
    Let $u$ be determined by $\eqref{eq:main}_4$, and $f$ satisfy
    \begin{equation}\begin{aligned} \label{eq:f}
    		\left\{\begin{array}{l}
    			\partial_t f+y \partial_x f-\frac{1}{A} \Delta f=-\frac{1}{A}u\cdot\nabla f+g,\quad (t,x,y) \in [0, T]\times \mathbb{R}^2, \\
    			\left. f\right|_{t=0} = f_{\rm in}, \quad (x,y) \in  \mathbb{R}^2,
    		\end{array}\right.
    \end{aligned}\end{equation}
    where $g$ is an additional term. The space-time estimate for \eqref{eq:f} is stated as follows.
    \begin{Prop}[Proposition 2.3 in \cite{CWY2025b}] \label{lem:est of f}
    	Let $u$ be determined by $\eqref{eq:main}_4$ and $0<a<\tfrac{1}{16(1+2 \pi)}$.
    	Then, for $0<\epsilon<1/2<m$ and $0\leq t\leq T$, it holds that 
    	\begin{equation*}\begin{aligned}
    			\|  \langle D_x\rangle^m  \langle \frac{1}{D_x}\rangle^\epsilon   f  \|_{X_a}^2 
    			&\lesssim    
    			\|  \langle D_x\rangle^m  \langle \frac{1}{D_x}\rangle^\epsilon   f_{\mathrm{in}} \|_{L^2}^2 +   \int_{0}^{t}\left| \Re\left(g \left\lvert\, \mathcal{M} e^{2 a A^{-\frac{1}{3}}\left|D_x\right|^{\frac{2}{3} }t }\langle D_x\rangle^{2 m}\langle\frac{1}{D_x}\rangle^{2 \epsilon} f\right.\right)\right| dt  \\
    			&\quad + \frac1{A^\frac12}\|  \langle D_x\rangle^m  \langle \frac{1}{D_x}\rangle^\epsilon   \omega \|_{X_a}   \|  \langle D_x\rangle^m  \langle \frac{1}{D_x}\rangle^\epsilon   f \|_{X_a}^2 .
    	\end{aligned}\end{equation*}
    \end{Prop}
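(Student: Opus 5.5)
We will prove the estimate by a weighted energy method built on the ghost multiplier $\mathcal{M}$ and the Gevrey-type time weight. Set
\[
W(t,D_x):=e^{aA^{-\frac13}|D_x|^{\frac23}t}\langle D_x\rangle^{m}\langle \tfrac{1}{D_x}\rangle^{\epsilon},\qquad F:=Wf .
\]
We pair \eqref{eq:f} with $\mathcal{M}W^2 f$ in $L^2$ and take twice the real part.

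\textbf{Step 1: the linear terms.} Since $W$ depends only on $D_x$, it commutes with $\partial_t+y\partial_x$ up to the time derivative of the exponential. Hence \eqref{eq:crucial M} applied to $F$ gives
\[
\frac{d}{dt}\|\sqrt{\mathcal{M}}F\|_{L^2}^2+\int k\partial_\xi\mathcal{M}\,|\hat F|^2\,dk\,d\xi
-2aA^{-\frac13}\big\|\sqrt{\mathcal{M}}\,|D_x|^{\frac13}F\big\|_{L^2}^2 .
\]
The dissipation term contributes $\frac{2}{A}\Re(\nabla F|\mathcal{M}\nabla F)\ge\frac2A\|\nabla F\|_{L^2}^2$, because $\mathcal{M}\ge1$ and $\mathcal{M}$ commutes with derivatives. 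Lemma \ref{lem:M12} bounds the multiplier integral from below by
\[
\tfrac14A^{-\frac13}\||D_x|^{\frac13}F\|_{L^2}^2-\tfrac1{2A}\|\partial_yF\|_{L^2}^2+\|\partial_x\nabla\Delta^{-1}F\|_{L^2}^2 .
\]
The negative $\partial_y$ term is absorbed by $\frac2A\|\nabla F\|_{L^2}^2$. The weight loss $2a(1+2\pi)A^{-\frac13}\||D_x|^{\frac13}F\|_{L^2}^2$ is absorbed by $\frac14A^{-\frac13}\||D_x|^{\frac13}F\|_{L^2}^2$, since $a<\frac1{16(1+2\pi)}$. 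Integrating in time and using $1\le\mathcal{M}\le1+2\pi$ then produces all four components of $\|F\|_{X_a}^2$ on the left. On the right we get the initial data term plus the $g$-term, which is kept in the form $\int_0^t|\Re(g|\mathcal{M}W^2f)|\,dt$ as in the statement.

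\textbf{Step 2: the transport nonlinearity.} It remains to bound
\[
\frac1A\int_0^t\big|\Re(u\cdot\nabla f\,|\,\mathcal{M}W^2f)\big|\,dt .
\]
We take the Fourier transform in $x$, so this becomes a convolution in the horizontal frequency, $k=l+(k-l)$, with $u$ at frequency $k-l$ and $f$ at frequency $l$. Several facts are used here:
\begin{itemize}
\item Incompressibility lets us write $u\cdot\nabla f=\nabla\cdot(uf)$ and move a derivative onto the test function.
\item Since $|k|^{\frac23}\le|l|^{\frac23}+|k-l|^{\frac23}$, the exponential weight splits as $e^{\cdots|k|^{2/3}}\le e^{\cdots|l|^{2/3}}e^{\cdots|k-l|^{2/3}}$.
\item The weight $\langle k\rangle^m$ is distributed as $\langle k\rangle^m\lesssim\langle l\rangle^m+\langle k-l\rangle^m$.
\end{itemize}
We then work with the trichotomy of Remark \ref{rem:domain}. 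Where the velocity carries the lower frequency, we place $u$ in $L^\infty$ via $m>\frac12$: the sum over $k-l$ of $\langle k-l\rangle^{-m}\langle\frac1{k-l}\rangle^{-\epsilon}|k-l|^{-1}\cdot|k-l|$ converges. The velocity is controlled through the inviscid-damping component $\|\partial_x\nabla\Delta^{-1}W\omega\|_{L^2L^2}$ and the $L^\infty L^2$ component of $\|W\omega\|_{X_a}$. The gradient of $f$ is controlled by $A^{-\frac12}\|\nabla F\|_{L^2L^2}$.

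\textbf{Step 3: assembling and the main obstacle.} Taking the $L^\infty L^2$ norm of one factor of $f$ and the $L^2L^2$ norms of the remaining factors, each with the appropriate power of $A$, yields
\[
A^{-1}\cdot A^{\frac12}\,\|W\omega\|_{X_a}\|F\|_{X_a}^2 = A^{-\frac12}\,\|W\omega\|_{X_a}\|F\|_{X_a}^2 .
\]
The main obstacle is the low horizontal frequency singularity. The Biot--Savart symbol of $u$ behaves like $|k-l|^{-1}$, and the weights $\langle\frac1k\rangle^\epsilon$ versus $\langle\frac1l\rangle^\epsilon$ do not compare when $|k|\ll|l|$. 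In the low--high region $2|k|<|k-l|$ we instead use $\langle\frac1k\rangle^\epsilon\le\langle\frac1k\rangle^{\epsilon}$ directly: the square integrability of $\langle\frac1k\rangle^{\epsilon}\langle k\rangle^{-m}\cdots$ near $k=0$ requires $\epsilon<\frac12$, and this is where that constraint enters. In the near-resonant region $|l|$ is the smallest frequency; there we exploit $\langle\frac1l\rangle^{-\epsilon}$ being small, together with $|l|^{-1}|l|$ cancellation from $\partial_x$ acting on $f$ in the divergence form. Everything else is routine Young/Cauchy--Schwarz bookkeeping.
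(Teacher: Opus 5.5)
Your Step 1 is correct. Commuting $W$ with $\partial_t+y\partial_x$, then using \eqref{eq:crucial M}, Lemma \ref{lem:M12}, $\mathcal M\ge1$ and $2a(1+2\pi)<\frac18$, does put all four components of $\|F\|_{X_a}^2$ on the left.

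\textbf{The gap is in Steps 2--3,} which is where the whole content of the proposition lies. Your count $A^{-1}\cdot A^{1/2}$ puts one copy of $f$ in $L^\infty_tL^2$ and $\nabla f$ in $L^2_tL^2$ (cost $A^{1/2}$). It therefore needs the velocity in $L^2_tL^\infty_{x,y}$, frequency-weighted, bounded by $\|W\omega\|_{X_a}$ with no loss in $A$. The only $A$-free time-integrable velocity quantity is the inviscid-damping piece $\|W\partial_x\nabla\Delta^{-1}\omega\|_{L^2L^2}$.

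This works for $u_2=-\partial_x\Delta^{-1}\omega$. Both $u_{2,j}$ (with a factor $|j|^{-1}$) and $\partial_yu_{2,j}$ are bounded by $(\partial_x\nabla\Delta^{-1}\omega)_j$, so $\|u_{2,j}\|_{L^\infty_y}\le|j|^{-1/2}\|(\partial_x\nabla\Delta^{-1}\omega)_j\|_{L^2_y}$, and $u_2\partial_yf$ closes as you describe.

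It fails for $u_1=\partial_y\Delta^{-1}\omega$. The inviscid-damping term controls $u_{1,j}$ only in $L^2_y$, while $\partial_yu_1=\partial_y^2\Delta^{-1}\omega$ is not damped at all, and the symbol ratio $\xi^2/(|j|\sqrt{j^2+\xi^2})$ is unbounded.
\begin{itemize}
\item Interpolating with $\|\omega\|_{L^\infty L^2}$, as you suggest, gives only $u_1\in L^4_tL^\infty$. Together with $\nabla f\in L^2_t$ and $f\in L^\infty_t$, the time exponents sum to $\frac14+\frac12+0<1$, so Hölder produces a factor $t^{1/4}$.
\item Placing $\omega$ in $L^2_t$ instead, via $A^{-1/3}\||D_x|^{1/3}\omega\|_{L^2L^2}^2$ or $A^{-1}\|\nabla\omega\|_{L^2L^2}^2$, costs a positive power of $A$. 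This leaves at best $A^{-5/12}$ rather than $A^{-1/2}$.
\end{itemize}

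\textbf{The missing idea is to treat $u_1\partial_xf$ separately,} using that the derivative on $f$ is horizontal.
\begin{itemize}
\item Put $u_1$ in $L^\infty_t$: by \eqref{eq:GN2} and $\|\nabla_ju_j\|_{L^2_y}=\|\omega_j\|_{L^2_y}$, one has $\|u_{1,j}\|_{L^\infty_y}\le|j|^{-1/2}\|\omega_j\|_{L^2_y}$.
\item Put \emph{both} copies of $f$ in $L^2_t$ by splitting the horizontal derivative, e.g.\ $|l|\lesssim|l|^{2/3}|k|^{1/3}$ when $|k|\gtrsim|l|$.
\item Use the interpolation $\||D_x|^sF\|_{L^2L^2}\lesssim A^{s/2}\|F\|_{X_a}$ for $s\in[\frac13,1]$, between the enhanced-dissipation and viscous components. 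This gives exactly $A^{-1}A^{1/6}A^{1/3}=A^{-1/2}$.
\end{itemize}
In the low--high region ($|k|$ smallest, $|l|\sim|k-l|$) the split changes:
\begin{itemize}
\item Move $|k-l|^{1/3}$ onto $\omega$ (cost $A^{1/6}$) and put $|l|^{2/3}f_l$ in $L^2_t$ (cost $A^{1/3}$).
\item Put $f_k$ in $L^\infty_t$.
\item Absorb the leftover $\langle\frac1k\rangle^{\epsilon}$ through $L^2_k$-integrability near $k=0$; this is where $\epsilon<\frac12$ is used.
\end{itemize}

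Separately, your remark about a ``$|l|^{-1}|l|$ cancellation from $\partial_x$ acting on $f$ in the divergence form'' does not describe a real mechanism. In divergence form the derivative lands on the test function at frequency $k$, and the Biot--Savart factor sits at frequency $k-l$, not $l$.
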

    Then, we give a direct estimate for nonlinear terms of the form $|D_x|^{1/3}u \cdot \nabla f$.
    \begin{Lem}[Lemma A.3 in \cite{CWY2025b}] \label{lem:est of dx u}
    	Let $u$ be determined by $\eqref{eq:main}_4$ and $0<a<\tfrac{1}{16(1+2 \pi)}$. For $0<\epsilon<1/2<m$, $0\leq t\leq T$ and certain smooth enough $f$, it holds that
    	\begin{equation*}\begin{aligned}    
    			&\quad\frac1A \int_{0}^{t} \left|\Re\left(|D_x|^{\frac13}u \cdot \nabla f \left\lvert\, \mathcal{M} e^{2 a A^{-\frac{1}{3}}\left|D_x\right|^{\frac{2}{3} }t }\langle D_x\rangle^{2 m   }\langle\frac{1}{D_x}\rangle^{2 \epsilon} |D_x|^\frac13  f  \right.\right)\right|   dt\\
    			&\lesssim \frac1{A^\frac12}\| \langle D_x\rangle^m\langle\frac{1}{D_x}\rangle^\epsilon \omega \|_{X_a}    \| \langle D_x\rangle^m\langle\frac{1}{D_x}\rangle^\epsilon |D_x|^\frac13 f \|_{X_a}^2   .
    	\end{aligned}\end{equation*}
    \end{Lem}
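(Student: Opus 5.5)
The estimate is a trilinear bound, and I would prove it on the horizontal Fourier side, in the same way as Proposition \ref{lem:est of f}. Set
\[
F:=\mathcal{M} e^{2aA^{-1/3}|D_x|^{2/3}t}\langle D_x\rangle^{2m}\langle D_x^{-1}\rangle^{2\epsilon}|D_x|^{1/3}f .
\]
Since $u=\nabla^\perp\Delta^{-1}\omega$, the $k$-th mode of $|D_x|^{1/3}(u\cdot\nabla f)$ is
\[
|k|^{1/3}\int_{\mathbb R}\bigl(u_{k-l}\cdot\nabla_l f_l\bigr)\,dl ,
\qquad u_{k-l}=\bigl(\partial_y\Delta_{k-l}^{-1}\omega_{k-l},\,-i(k-l)\Delta_{k-l}^{-1}\omega_{k-l}\bigr).
\]
I would pair this with $F_k$ in $L^2_y$ and integrate in $k,l,t$.

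\textbf{Step 1: moving $|k|^{1/3}$.} The whole point is that the $|k|^{1/3}$ can be moved onto $f_l$ or onto $\omega_{k-l}$. By the subadditivity $|k|^{1/3}\le |l|^{1/3}+|k-l|^{1/3}$ this splits into two pieces.
\begin{itemize}
\item The piece $|l|^{1/3}$ gives exactly the transport structure $u\cdot\nabla(|D_x|^{1/3}f)$ tested against $F$. This is the commutator-type term already treated in the proof of Proposition \ref{lem:est of f}, with $f$ replaced by $|D_x|^{1/3}f$. In particular, its symmetric part, where the weights at $k$ and at $l$ coincide, vanishes by $\nabla\cdot u=0$.
\item The piece $|k-l|^{1/3}$ lands on $\omega$. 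There it is absorbed by the dissipation $A^{-1/3}\||D_x|^{1/3}\omega\|_{L^2L^2}^2$ contained in $\|\langle D_x\rangle^m\langle D_x^{-1}\rangle^\epsilon\omega\|_{X_a}$.
\end{itemize}

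\textbf{Step 2: handling the weights.} To compare weights I use the trichotomy of Remark \ref{rem:domain}.
\begin{itemize}
\item \emph{High-low region} $|k|>2|k-l|$. Here $|k|\sim|l|$, so
\[
\langle k\rangle^{m}\langle 1/k\rangle^{\epsilon}\lesssim\langle l\rangle^{m}\langle 1/l\rangle^{\epsilon}.
\]
The exponential weights satisfy $e^{a|k|^{2/3}}\le e^{a|l|^{2/3}}e^{a|k-l|^{2/3}}$. The bounded multiplier $\mathcal M$ and its commutator are controlled through the smoothness of $\mathcal M$ in the $\xi$ variable.
\item \emph{Remaining regions} $|k|\sim|k-l|$ and $2|k|<|k-l|$. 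Here the weight is moved onto $\omega_{k-l}$. The low-frequency factor $\langle 1/k\rangle^{\epsilon}$ is not comparable to the weight at $k-l$, so it is paid for in $L^2_k$ integrability: integrating $\langle k\rangle^{-m}\langle 1/k\rangle^{-\epsilon}$ over the lowest frequency gives a convergent integral because $\epsilon<1/2<m$.
\end{itemize}

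\textbf{Step 3: closing the estimate with the correct power of $A$.} The velocity $u$ is estimated in $L^2_tL^\infty_y$ through the inviscid damping term $\|\partial_x\nabla\Delta^{-1}\omega\|_{L^2L^2}$ of $X_a$, together with the enhanced dissipation. Concretely,
\[
\|u_{k-l}\|_{L^\infty_y}\lesssim \|\nabla_{k-l}\Delta^{-1}_{k-l}\omega_{k-l}\|^{1/2}_{L^2}\,\|\omega_{k-l}\|^{1/2}_{L^2}.
\]
The factor $\nabla f$ uses $A^{-1/2}\|\nabla\cdot\|_{L^2L^2}$ from $X_a$, and the test function uses the $L^\infty L^2$ part. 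Collecting the powers, the prefactor $A^{-1}$ times the available dissipation gives
\[
A^{-1}\cdot A^{1/2}\cdot A^{1/6}\cdot A^{1/6}\cdots \le A^{-1/2}.
\]
This has to be balanced by interpolating between the $|D_x|^{1/3}$-dissipation and the $\partial_x\nabla\Delta^{-1}$ term.

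\textbf{Main obstacle.} I expect the hardest part to be the low-high region when $|k|$ is tiny. There $\langle 1/k\rangle^{2\epsilon}$ is large, and the commutator of $\mathcal M$ in $\xi$ must not lose a factor $|k|^{-1}$. I would first try bounding $|\partial_\xi\mathcal M_2|\le 1/|k|$ against the gain $|k|^{1/3}\cdot|k-l|^{-1}$ coming from the $\Delta^{-1}$ in $u$.
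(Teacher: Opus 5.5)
\textbf{There is a genuine gap.} Your overall frame is the right one: the horizontal Fourier side, the trichotomy of Remark~\ref{rem:domain}, subadditivity of the exponential weight, and low-frequency integrability. The problem begins with how you read the operator.

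In the lemma, $|D_x|^{1/3}$ acts on $u$ only, so the term is $(|D_x|^{1/3}u)\cdot\nabla f$. In the proof of Lemma~\ref{lem:est of n}, the paper formally splits $|D_x|^{1/3}(u\cdot\nabla n)$ into $u\cdot\nabla|D_x|^{1/3}n$, handled by Proposition~\ref{lem:est of f}, and $|D_x|^{1/3}u\cdot\nabla n$, handled by this lemma. Your Step 1 therefore redoes a reduction made outside the lemma. The whole content of the statement is your second bullet, which you dispose of in one line.

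Two further points about Step 1. After the pointwise bound $|k|^{1/3}\le|l|^{1/3}+|k-l|^{1/3}$ the integrand is in absolute value, so no $\nabla\cdot u=0$ cancellation survives for your first piece; you would need the exact splitting $|k|^{1/3}=|l|^{1/3}+(|k|^{1/3}-|l|^{1/3})$. And since the actual lemma needs no cancellation, $\mathcal M$ enters only through $\|\mathcal M(k,D_y)\|_{L^2_y\to L^2_y}\le 1+2\pi$. No commutator of $\mathcal M$ appears, so the ``main obstacle'' you name is not where the difficulty lies.

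The real difficulty is the factor $A^{-1/2}$, and your mechanism does not produce it. Absorbing $|k-l|^{1/3}$ into $A^{-1/3}\||D_x|^{1/3}\omega\|_{L^2L^2}^2$ costs $A^{1/6}$, and putting $\partial_y f$ in $L^2_t$ costs $A^{1/2}$. That gives $A^{-1}A^{1/6}A^{1/2}=A^{-1/3}$; your displayed product $A^{-1}A^{1/2}A^{1/6}A^{1/6}$ is $A^{-1/6}$. The interpolation bound in Step 3 places $u$ only in $L^4_t$ if no power of $A$ is to be lost. Also, the right-hand side controls $A^{-1/2}\||D_x|^{1/3}\nabla f\|_{L^2L^2}$, not $A^{-1/2}\|\nabla f\|_{L^2L^2}$, so a factor $|l|^{1/3}$ on $\nabla f_l$ must be produced.

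The missing idea is to treat the two velocity components separately.

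\emph{The term $u_2\partial_y f$.} Since $u_2=-\partial_x\Delta^{-1}\omega$,
$\|u_{2,j}\|_{L^\infty_y}\lesssim\|\hat u_2(j,\cdot)\|_{L^1_\xi}\lesssim|j|^{-1/2}\|j\nabla_j\Delta_j^{-1}\omega_j\|_{L^2_y}$.
So $u_2$ lies in $L^2_tL^\infty_y$ through the inviscid-damping part of $X_a$ at no cost in $A$. With $\partial_y f\in L^2_t$ and the test function in $L^\infty_tL^2$, this gives exactly $A^{-1}A^{1/2}=A^{-1/2}$. The factor $|k-l|^{1/3}$ is handled by region:
where $|k-l|\lesssim|l|$, move it onto $\partial_y f_l$;
where $l$ is the lowest frequency, keep it on $u_2$, which leaves $|k-l|^{-1/6}\lesssim|l|^{-1/6}$, and pay for the missing $|l|^{1/3}$ with $\||l|^{-1/2}\langle l\rangle^{-m}\langle 1/l\rangle^{-\epsilon}\|_{L^2_l}<\infty$;
where $k$ is the lowest frequency, handle $\langle 1/k\rangle^{\epsilon}$ as in Case 3 of Lemma~\ref{lem:est of n}, via $\|(\langle 1/k\rangle^{\epsilon}+\langle 1/k\rangle^{1/2-\epsilon})\langle k\rangle^{-m}\|_{L^2_k}<\infty$.
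This last step, not the integrability of $\langle k\rangle^{-m}\langle 1/k\rangle^{-\epsilon}$, is where $\epsilon<1/2$ enters.

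\emph{The term $u_1\partial_x f$.} The previous route is unavailable here, because $u_1=\partial_y\Delta^{-1}\omega$ is not controlled in $L^2_tL^\infty_y$ by the inviscid-damping norm ($|\xi|/|(j,\xi)|\notin L^2_\xi$). Instead, use $\|u_{1,j}\|_{L^\infty_y}\lesssim|j|^{-1/2}\|\omega_j\|_{L^2_y}$ and let the horizontal derivative on $f$ place the $f$-factors in the enhanced-dissipation part of $\||D_x|^{1/3}f\|_{X_a}$. In the high-low region, $|k-l|^{1/3}|l|\,|k|^{1/3}\lesssim|k-l|^{1/3}|l|^{2/3}|k|^{2/3}$, with $u_1$ in $L^\infty_t$; this gives $A^{-1}A^{1/6}A^{1/6}=A^{-2/3}$. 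In the other two regions, shift part of the derivative onto $\omega$ and use its enhanced dissipation, which again gives $A^{-2/3}$.

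Without this component-wise bookkeeping your argument yields a weaker power of $A$, which is not the statement.
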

    A fundamental elliptic estimate for $c$ is stated as follows.
    \begin{Lem}[Lemma 2.2 in \cite{CWY2025b}] \label{lem:Elliptic estimate}
    	Let $\mathcal{N} = \mathcal{N}(D_x, D_y, t)$ be a Fourier multiplier. Then, for $c$ and $n$ satisfying $\eqref{eq:main}_2$ and all $t \geq 0$, the following estimates hold.
    	
    	\textbf{Case of $\alpha = 0$:}
    	\begin{equation*}
    		\| \nabla^2 \mathcal{N} c \|_{L^2} \leq \| \mathcal{N} n \|_{L^2}, \quad
    		\| \nabla c \|_{L^4} \lesssim   \| n \|_{L^2} + M  ;
    	\end{equation*}
    	
    	\textbf{Case of $\alpha > 0$:}
    	\begin{equation*}
    		\| \nabla^2 \mathcal{N}c \|_{L^2}^2 +  	2 \alpha \| \nabla \mathcal{N}c \|_{L^2}^2 + 	\|  \alpha \mathcal{N}c \|_{L^2}^2 = 	\|  \mathcal{N}n \|_{L^2}^2, \quad 
    		\| \nabla c \|_{L^4} \leq C(\alpha) \| n \|_{L^2}.
    	\end{equation*}
    \end{Lem}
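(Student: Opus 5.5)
Both estimates in Lemma~\ref{lem:Elliptic estimate} will be proved on the Fourier side. The equation $-\Delta c = n-\alpha c$ becomes
\[
(|k|^2+|\xi|^2+\alpha)\,\hat c(k,\xi)=\hat n(k,\xi).
\]
A Fourier multiplier $\mathcal N(k,\xi,t)$ commutes with this relation, so $\mathcal N c$ and $\mathcal N n$ satisfy the same equation. We may therefore take $\mathcal N=\mathrm{Id}$ after replacing $n$ by $\mathcal N n$.

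\textbf{The $L^2$ identity.} Write $\rho=|k|^2+|\xi|^2=|(k,\xi)|^2$. By Plancherel,
\[
\|\nabla^2 c\|_{L^2}^2=\int \rho^2|\hat c|^2, \qquad \|\nabla c\|_{L^2}^2=\int \rho|\hat c|^2 .
\]
The first equality uses $\sum_{i,j}|\eta_i\eta_j|^2=|\eta|^4$ with $\eta=(k,\xi)$. Expanding $|\hat n|^2=(\rho+\alpha)^2|\hat c|^2=(\rho^2+2\alpha\rho+\alpha^2)|\hat c|^2$ and integrating gives exactly
\[
\|\nabla^2 c\|^2+2\alpha\|\nabla c\|^2+\|\alpha c\|^2=\|n\|^2
\]
for $\alpha>0$. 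For $\alpha=0$ the same computation gives $\|\nabla^2 c\|_{L^2}=\|n\|_{L^2}$, where $c$ is defined as $(-\Delta)^{-1}n$ through the multiplier $\rho^{-1}$. This is meaningful for $\nabla^2 c$ because $\eta_i\eta_j/\rho$ is bounded.

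\textbf{The $L^4$ bound for $\alpha>0$.} I will use Gagliardo--Nirenberg in 2D,
\[
\|\nabla c\|_{L^4}\lesssim\|\nabla c\|_{L^2}^{1/2}\|\nabla^2 c\|_{L^2}^{1/2}.
\]
From the identity, $\|\nabla c\|_{L^2}\le(2\alpha)^{-1/2}\|n\|_{L^2}$ and $\|\nabla^2c\|_{L^2}\le\|n\|_{L^2}$. Hence $\|\nabla c\|_{L^4}\le C(\alpha)\|n\|_{L^2}$.

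\textbf{The $L^4$ bound for $\alpha=0$.} This is the main obstacle, since $\nabla c$ need not lie in $L^2$. Instead I write $\nabla c=\nabla(-\Delta)^{-1}n=K*n$ with $K(z)=-\frac{z}{2\pi|z|^2}$, so $|K(z)|\lesssim|z|^{-1}$. Split $K=K\mathbf 1_{|z|<1}+K\mathbf 1_{|z|\ge1}$.

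\emph{Near part.} We have $K\mathbf 1_{|z|<1}\in L^{r}$ for every $r<2$. Take $r=4/3$; Young's inequality with $1+\tfrac14=\tfrac34+\tfrac12$ gives $\|K\mathbf 1_{|z|<1}*n\|_{L^4}\lesssim\|n\|_{L^2}$.

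\emph{Far part.} We have $K\mathbf 1_{|z|\ge1}\in L^4$, since $\int_{|z|\ge 1}|z|^{-4}\,dz<\infty$. Young's inequality with $L^4*L^1\subset L^4$ gives $\|K\mathbf 1_{|z|\ge1}*n\|_{L^4}\lesssim\|n\|_{L^1}=M$.

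Summing the two parts gives $\|\nabla c\|_{L^4}\lesssim\|n\|_{L^2}+M$. This also makes precise in which sense $\nabla c$ is defined when $\alpha=0$: as $K*n$, for $n\in L^1\cap L^2$.
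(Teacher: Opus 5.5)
Your proof is correct. The paper gives no proof of this lemma; it quotes it as Lemma 2.2 of \cite{CWY2025b}. So the comparison is with the standard argument.

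Your Plancherel identity (via $\sum_{i,j}\eta_i^2\eta_j^2=|\eta|^4$) and the Ladyzhenskaya step for $\alpha>0$ are exactly the standard ones. You also correctly note that for $\alpha=0$ only the bounded symbols $\eta_i\eta_j/|\eta|^2$ are needed for $\nabla^2\mathcal N c$, while $\nabla c$ itself need not lie in $L^2$.

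For the $\alpha=0$ bound on $\|\nabla c\|_{L^4}$, the usual route is Hardy--Littlewood--Sobolev followed by interpolation: $\|\nabla c\|_{L^4}\lesssim\|n\|_{L^{4/3}}\le\|n\|_{L^1}^{1/2}\|n\|_{L^2}^{1/2}$. Your splitting of the kernel $-z/(2\pi|z|^2)$ replaces HLS with two applications of Young's inequality, which is more elementary. It also loses nothing. Cutting at radius $R$ instead of $1$ gives $\|\nabla c\|_{L^4}\lesssim R^{1/2}\|n\|_{L^2}+R^{-1/2}\|n\|_{L^1}$, and optimizing in $R$ recovers the same multiplicative bound.

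One point you should state explicitly. $M$ denotes the initial mass $\|n_{\rm in}\|_{L^1}$, so writing $\|n(t)\|_{L^1}=M$ uses two facts about the dynamics: $n\ge 0$, and conservation of mass for $\eqref{eq:main}_1$ (it is in divergence form and $\nabla\cdot u=0$). This does not follow from $\eqref{eq:main}_2$ alone.
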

    At the end of this subsection, we give the following $L^\infty$ embedding, which is frequently used in this paper.
    \begin{Lem}[Lemma A.2 in \cite{CWY2025b}]
    	For $f_l= f_l(y) \in H^1(\mathbb{R})$ (defined in \eqref{eq:def of fk}), it holds that
    	\begin{align}
    		\| f_l(\cdot)\|_{L^\infty} &\leq    \|   f_l(\cdot)\|_{L^2}^\frac12  \|   \py f_l(\cdot)\|_{L^2}^\frac12, \label{eq:GN1}\\
    		\| f_l(\cdot)\|_{L^\infty} &\leq  |l|^{-\frac12} \| \nabla_l f_l(\cdot)\|_{L^2}. \label{eq:GN2}
    	\end{align}
    \end{Lem}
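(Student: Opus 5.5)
The plan is to prove \eqref{eq:GN1} directly from the fundamental theorem of calculus, using the decay at infinity of $H^1(\mathbb{R})$ functions. Then \eqref{eq:GN2} follows from \eqref{eq:GN1} by Young's inequality. Throughout, $l\neq0$ is fixed and $f_l$ is complex-valued, as it is a horizontal Fourier mode.

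For \eqref{eq:GN1}, I first argue for $f_l\in C_c^\infty(\mathbb{R})$ and then pass to $H^1(\mathbb{R})$ by density. The passage is legitimate because both sides are continuous in the $H^1$ topology, and $H^1(\mathbb{R})\hookrightarrow C_0(\mathbb{R})$, so the $L^\infty$ norm passes to the limit. For smooth compactly supported $f_l$, $|f_l|^2$ is absolutely continuous with $\partial_s|f_l(s)|^2=2\Re\big(\overline{f_l(s)}\,\partial_s f_l(s)\big)$. Integrating from $-\infty$ to $y$, and separately from $y$ to $+\infty$, gives
\begin{equation*}
|f_l(y)|^2=\int_{-\infty}^{y}2\Re\big(\overline{f_l}\,\partial_s f_l\big)\,ds=-\int_{y}^{\infty}2\Re\big(\overline{f_l}\,\partial_s f_l\big)\,ds .
\end{equation*}
Averaging the two representations and bounding the real part by the modulus yields $|f_l(y)|^2\le\int_{\mathbb{R}}|f_l||\partial_s f_l|\,ds$. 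The Cauchy--Schwarz inequality then gives $|f_l(y)|^2\le\|f_l\|_{L^2}\|\partial_y f_l\|_{L^2}$. Taking the supremum over $y$ gives \eqref{eq:GN1} with constant exactly $1$. The averaging step is the one point requiring care: using only one half-line would cost a factor $\sqrt2$.

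For \eqref{eq:GN2}, recall $\nabla_l=(l,\partial_y)$, so that
\begin{equation*}
\|\nabla_l f_l\|_{L^2}^2=l^2\|f_l\|_{L^2}^2+\|\partial_y f_l\|_{L^2}^2 .
\end{equation*}
Applying \eqref{eq:GN1} and then the AM--GM inequality in the form $ab\le\frac{1}{2|l|}\big(l^2a^2+b^2\big)$ gives
\begin{equation*}
\|f_l\|_{L^\infty}^2\le\|f_l\|_{L^2}\|\partial_y f_l\|_{L^2}\le\frac{1}{2|l|}\|\nabla_l f_l\|_{L^2}^2\le|l|^{-1}\|\nabla_l f_l\|_{L^2}^2 .
\end{equation*}
Taking square roots gives \eqref{eq:GN2}, in fact with the better constant $2^{-1/2}$. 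There is no genuine obstacle here; the only points needing care are the sharp constant in \eqref{eq:GN1}, obtained from the two-sided averaging, and the density justification.
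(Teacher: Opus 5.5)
Your proof is correct. The two-sided averaging gives \eqref{eq:GN1} with constant exactly $1$, which is sharp (equality holds for $e^{-|y|}$). Combining it with $ab\le\frac{1}{2|l|}(l^2a^2+b^2)$ then yields \eqref{eq:GN2} with the better constant $(2|l|)^{-1/2}$. Your density step is also sound; if you want it self-contained, applying the smooth-case inequality to $f_n-f_m$ already gives uniform convergence of the approximants.

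The paper gives no proof of its own here: it imports the statement verbatim as Lemma A.2 of \cite{CWY2025b}. Your argument is the standard elementary one, so there is no substantive route to compare.

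The only alternative worth mentioning is for \eqref{eq:GN2}. It can be obtained directly on the $y$-Fourier side from $\|f_l\|_{L^\infty}\le(2\pi)^{-1/2}\|\hat f_l\|_{L^1_\xi}$ together with Cauchy--Schwarz and $\int_{\mathbb{R}}(l^2+\xi^2)^{-1}\,d\xi=\pi/|l|$. This gives the same constant $(2|l|)^{-1/2}$, but does not reuse \eqref{eq:GN1} as your AM--GM step does.
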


    \subsection{Bootstrap argument} 
    
    In this paper, we use the standard bootstrap argument to prove the main theorem. Let us define $T$ to be the end-point of the largest interval $[0, T]$ such that the following hypotheses hold for all $0 \leq t \leq T$ :
    \begin{align*}
    		&E(t):=  \sqrt{\|  \langle D_x\rangle^m  \langle \frac{1}{D_x}\rangle^\epsilon   \omega \|_{X_a} ^2    +   \|  \langle D_x\rangle^{m + \frac13}  \langle \frac{1}{D_x}\rangle^\epsilon    n \|_{X_a}^2 }   \leq 2K, \\
    		&E_\infty (t) :=  \| n\|_{L^\infty L^\infty}   \leq 2K_\infty, \alabel{eq:bootstap}
    \end{align*}
    where $K\geq 1$ and $K_\infty$ will be determined in the proof. We shall show the following proposition to improve the hypotheses above.
    \begin{Prop} \label{main prop}
    	Under the same assumptions of Theorem \ref{thm:PKSNS}, there exists a positive constant $C^*$ depending on $\epsilon, m, \kappa, \mu$ and $\alpha$ 
    	such that if $A> \bar{A}$ with 
    	$$
    	  \bar{A}
          = 
          C^*  \left( \|n_{\mathrm{in}}\|_{L^\infty}^2\chi_{|\kappa| + |\mu| > 0}  + M^2 \chi_{|\mu| > 0} + \|\omega_{\rm in}\|_{Y_{m, \epsilon}}^2  + \| \langle D_x \rangle^\frac13 n_{\rm in}\|_{Y_{m, \epsilon}}^2    +1\right)^{3 + 12\chi_{|\kappa|>0} + 6 \chi_{|\mu|>0} - 3\chi_{|\kappa||\mu|>0}},
    	$$
    	there hold
    	$$
    	E(t) \leq K \quad \text{and} \quad E_\infty(t) \leq K_\infty
    	$$
    	for all $0<t<T$. Additionally, $\epsilon>\frac13$ is required when $\alpha= 0$.
    \end{Prop}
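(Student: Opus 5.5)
We close the two bootstrap hypotheses separately, first $E(t)\le K$ with $K\approx C(\|\omega_{\rm in}\|_{Y_{m,\epsilon}}+\|\langle D_x\rangle^{1/3}n_{\rm in}\|_{Y_{m,\epsilon}}+1)$, then $E_\infty(t)\le K_\infty$.

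\textbf{Energy bound for $\omega$.} Apply Proposition \ref{lem:est of f} with $f=\omega$ and $g=-\frac1A\partial_x n$. The transport term gives $A^{-1/2}E^3\lesssim A^{-1/2}K^3$. For the buoyancy term we move $|D_x|^{1/3}$ onto $\omega$ and use $|k|^{2/3}\le\langle k\rangle^{2/3}$:
\[
\frac1A\int_0^t\bigl|(\partial_x n\,|\,\mathcal M e^{2a\cdots}\langle D_x\rangle^{2m}\langle D_x^{-1}\rangle^{2\epsilon}\omega)\bigr|\,dt\lesssim \frac1A\,\|\langle D_x\rangle^{m}\langle D_x^{-1}\rangle^{\epsilon}|D_x|^{\frac23}n\|_{L^2L^2}\,\|\cdots|D_x|^{\frac13}\omega\|_{L^2L^2}.
\]
Both factors are controlled by the $A^{-1/3}|D_x|^{1/3}$ dissipation part of $X_a$. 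This yields $\lesssim A^{-1/3}\|\langle D_x\rangle^{m+1/3}\cdots n\|_{X_a}\|\cdots\omega\|_{X_a}$, and this is the reason $n$ carries $1/3$ more $x$-regularity.

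\textbf{Energy bound for $n$.} Apply Proposition \ref{lem:est of f} to $\langle D_x\rangle^{1/3}n$, with $g$ consisting of the commutator $-\frac1A[\langle D_x\rangle^{1/3},u\cdot\nabla]n$ and $-\frac1A\langle D_x\rangle^{1/3}\nabla\cdot[n(\kappa n+\mu)\nabla c]$. The commutator is handled by Lemma \ref{lem:est of dx u} together with the trichotomy of Remark \ref{rem:domain}.

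For the chemotactic term, integrate by parts so that one $\nabla$ falls on the test function, where it is absorbed by $\frac1A\|\nabla\cdot\|_{L^2L^2}^2$. It then remains to bound $\frac1A\|\langle D_x\rangle^{m+1/3}\langle D_x^{-1}\rangle^{\epsilon}(n(\kappa n+\mu)\nabla c)\|_{L^2L^2}^2$. For this we go to horizontal Fourier variables, writing the product as a convolution in $l$ and $k-l$, and split into the three regions:
\begin{itemize}
\item in the high-low and low-high regions, the high frequency carries the weight, and the low-frequency factor is bounded in $L^\infty_y$ through \eqref{eq:GN1}/\eqref{eq:GN2} using $\langle l^{-1}\rangle^{-\epsilon}$ integrability;
\item in the near-resonant region, $\langle k^{-1}\rangle^{\epsilon}\lesssim\langle (k-l)^{-1}\rangle^{\epsilon}$, so the weight transfers to one factor.
\end{itemize}
The remaining factors are bounded by $\|n\|_{L^\infty}\le 2K_\infty$, by Lemma \ref{lem:Elliptic estimate} (giving $\nabla^2 c$ from $n$ and $\|\nabla c\|_{L^4}\lesssim\|n\|_{L^2}+M$), and, when $\alpha=0$, by the $\epsilon>1/3$ integrability remark. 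The time integral is converted through $\|n\|_{L^2L^2}\lesssim A^{1/6}\|\,|D_x|^{1/3}\cdot\|_{L^2L^2}$-type bounds on nonzero frequencies and the $\partial_x\nabla\Delta^{-1}$ part of $X_a$.

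The result should have the form
\[
E^2\le C E(0)^2+CA^{-1/2}K^3+CA^{-\theta}K^2\bigl(\kappa K_\infty+\mu+\mu M\bigr)^2,
\]
with a suitable $\theta>0$. Choosing $A$ large relative to $K,K_\infty,M$ gives $E\le K$; the exponent in $\bar A$ records these powers.

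\textbf{$L^\infty$ bound.} Run a Moser–Alikakos iteration on the $n$ equation in the original time variable, or equivalently with $1/A$ factors. Testing with $n^{p-1}$, the flow terms vanish by incompressibility, and the chemotactic term becomes
\[
\frac{p-1}{A}\int n^{p-1}(\kappa n+\mu)\nabla n\cdot\nabla c .
\]
This is controlled through $\|\nabla c\|_{L^4}$ or, integrating by parts, through $\Delta c=\alpha c-n$. The quantities $\|n\|_{L^2}$ and $\|\nabla c\|_{L^4}$ are bounded by $K$ and $M$, and Gagliardo–Nirenberg in $\mathbb R^2$ closes the recursion. The resulting $L^\infty$ bound is polynomial in $\|n_{\rm in}\|_{L^\infty},M,K$, which gives the sixth power.

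The crucial point is time-uniformity. We plan to obtain it not from the dissipation $\frac1A\|\nabla n\|^2$ alone, but by exploiting that $\int_0^t\|\nabla c\|_{L^\infty}^2$-type quantities are bounded by $A^{-1/3}$ times energy through the $X_a$ dissipation norms. The iteration then yields $\|n\|_{L^\infty}\le C(\|n_{\rm in}\|_{L^\infty}+\dots)^{\dots}+$ small, and we set $K_\infty$ equal to this bound.

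\textbf{Main obstacle.} The hardest step is the nonlinear term $\frac1A\kappa\nabla\cdot(n^2\nabla c)$ in the weighted energy. Its low horizontal frequency factors must be controlled without a zero-mode mass restriction, which is exactly the purpose of the three-region decomposition and the $\langle D_x^{-1}\rangle^\epsilon$ weight. We will first try putting the $L^\infty$ bound on $n$ in the near-resonant region, and $H^1_y$ embeddings in the other two regions.
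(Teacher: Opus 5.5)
Your plan fails at the step you yourself flag as the main obstacle. In every region of your trichotomy in $(l,k-l)$, one factor is $n(\kappa n+\mu)$. That factor is itself a product, and its weighted Fourier norms must be controlled. You propose to do this by peeling off one $n$ with the bootstrap bound $\|n\|_{L^\infty}\le 2K_\infty$ (explicitly in the near-resonant region), and by using $\|\nabla c\|_{L^4}\lesssim\|n\|_{L^2}+M$ inside the weighted estimate.

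Write $\mathcal W:=e^{aA^{-1/3}|D_x|^{2/3}t}\langle D_x\rangle^{m+\frac13}\langle\frac{1}{D_x}\rangle^{\epsilon}$. No bound of the form $\|\mathcal W(nh)\|_{L^2}\lesssim\|n\|_{L^\infty}\|\mathcal W h\|_{L^2}$ holds, for two reasons:
\begin{itemize}
\item The Gevrey factor is only submultiplicative: $e^{\lambda|k|^{2/3}}\le e^{\lambda|k-\eta|^{2/3}}e^{\lambda|\eta|^{2/3}}$, with $\lambda=aA^{-1/3}t$ unbounded in time. So every factor must carry the exponential weight on the Fourier side. For a counterexample, take $h$ with compact horizontal Fourier support and $n$ bounded with a polynomially decaying transform; then the left side is infinite.
\item The weight $\langle D_x^{-1}\rangle^{\epsilon}$ cannot be moved across multiplication by a bounded function. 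Two comparable high frequencies produce low output frequencies, and handling that is exactly what the trichotomy is for.
\end{itemize}

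The missing ingredient is the paper's Lemma \ref{lem:a preliminary step}. It runs a second trichotomy on the inner convolution $\int n_{k-l-\eta}n_\eta\,d\eta$, estimating $n_\eta$ mode by mode through \eqref{eq:GN1}. This bounds the weighted quantity $\int\|n_{k-l-\eta}\|_{L^2_y}\|n_\eta\|_{L^\infty_y}\,d\eta$ by $\|\mathcal W n\|_{L^2}\|\mathcal W|D_x|^{\frac13}n\|_{L^2}^{1/2}\|\mathcal W\partial_y n\|_{L^2}^{1/2}$. Combined with \eqref{eq:temp0.341} and H\"older in time, it gives the quartic bound $|\kappa|A^{-1/6}E^4$, in which $K_\infty$ does not appear at all.

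This decoupling is also what produces the threshold in the statement. Because the energy inequality closes in $K$ alone, the paper proceeds in two tiers:
\begin{itemize}
\item fix $K$ from the data and take $A>\bar A_1\sim|\kappa|^6K^{24}+|\mu|^3K^9+K^6$;
\item define $K_\infty$ polynomially in $K$, $\|n_{\rm in}\|_{L^\infty}$ (and $M$ if $\alpha=0$) via Lemma \ref{lem:n L infty L infty}, then take $A>\bar A_2\sim|\kappa|^3K_\infty^6+|\mu|^3K_\infty^3$.
\end{itemize}
This is where the exponent $3+12\chi_{|\kappa|>0}+6\chi_{|\mu|>0}-3\chi_{|\kappa||\mu|>0}$ comes from. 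With $K_\infty$ inside your energy inequality and $\theta$ unspecified, you have not derived it.

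There is also a smaller issue in your $L^\infty$ step. The quantity $\int_0^t\|\nabla c\|_{L^\infty}^2$ is $A^{1/3}$ times the energy, not $A^{-1/3}$. A Gronwall argument built on it gives $\|n(t)\|_{L^p}\le\|n_{\rm in}\|_{L^p}\exp(CpA^{-2/3}\cdots)$, which degenerates as $p\to\infty$. The paper uses the integrated dissipation only for the $L^4$ step. At higher levels it uses Nash's inequality and an ODE comparison, with coefficients $\|n\|_{L^\infty L^2}$ and $\|\nabla c\|_{L^\infty L^4}$ bounded uniformly in time. As a result the level-$p$ constants grow only like $Bp^8$, and the iterated product converges.
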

    \begin{proof}[Proof of Theorem \ref{thm:PKSNS}]
    	Proposition \ref{main prop} with the local well-posedness (for example, the similar arguments as \cite{W2018}, \cite{WWZ2026} or \cite{WYZ2026}) of the system \eqref{eq:main} implies that $T=+\infty$, and thus completes the proof.
    \end{proof}

    \section{Proof of Proposition \ref{main prop}}	 \label{sec.3}
    In this section, we will estimate $n$, $\omega$ and then complete the proof of Proposition \ref{main prop}.
    
    \subsection{Estimate of $n^2$.}
    As a preliminary step, we give a lemma about the estimate of $n^2$ by a new frequency decomposition, which plays an important role in  the energy estimate of $n$.
    \begin{Lem} \label{lem:a preliminary step}
        For $a, \gamma, t\geq 0$ and $0<\epsilon< \frac12 <m$, it holds that
    	\begin{align*}
    		&\quad\big\|e^{a A^{-\frac{1}{3}}|k-l|^{\frac{2}{3} } t }\langle k-l\rangle^{m + \gamma } \langle\frac{1}{k-l}\rangle^\epsilon    \int_{\mathbb{R}}\| n_{k-l-\eta}\|_{L_y^2} \|n_{\eta}\|_{L_y^\infty} d\eta \big\|_{L_{k-l}^2} \\
    		&\lesssim  \big\|  e^{a A^{-\frac{1}{3}}|D_x|^{\frac{2}{3} } t } \langle D_x\rangle^{m + \gamma}\langle\frac{1}{D_x}\rangle^\epsilon   n    \big\|_{L^2 }
    		\big\|  e^{a A^{-\frac{1}{3}}|D_x|^{\frac{2}{3} } t } \langle D_x\rangle^{m + \gamma}\langle\frac{1}{D_x}\rangle^\epsilon |D_x|^\frac13  n    \big\|_{L^2 }^\frac12 \\
    		&\quad\times \big\|  e^{a A^{-\frac{1}{3}}|D_x|^{\frac{2}{3} } t } \langle D_x\rangle^{m + \gamma}\langle\frac{1}{D_x}\rangle^\epsilon  \py n   \big\|_{L^2 }^\frac12 . 
    	\end{align*}
    \end{Lem}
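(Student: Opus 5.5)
The plan is to reduce the estimate to a weighted Young-type convolution inequality in the horizontal frequency, after splitting the $\eta$-integral into the three regions of Remark \ref{rem:domain}. Write $j:=k-l$ for the output frequency, so the integrand involves $n_{j-\eta}$ and $n_\eta$. I would first move all the weights onto the two factors:
\begin{itemize}
\item by subadditivity of $s\mapsto s^{2/3}$, $e^{aA^{-1/3}|j|^{2/3}t}\le e^{aA^{-1/3}|j-\eta|^{2/3}t}e^{aA^{-1/3}|\eta|^{2/3}t}$;
\item $\langle j\rangle^{m+\gamma}\lesssim \langle j-\eta\rangle^{m+\gamma}+\langle \eta\rangle^{m+\gamma}$.
\end{itemize}
The $L^\infty_y$ factor is handled by \eqref{eq:GN1}, $\|n_\eta\|_{L^\infty_y}\le \|n_\eta\|_{L^2_y}^{1/2}\|\partial_y n_\eta\|_{L^2_y}^{1/2}$. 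This is the source of the two square-root norms on the right-hand side.

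The basic one-dimensional bound is the following. Set $w(\eta):=e^{aA^{-1/3}|\eta|^{2/3}t}\langle\eta\rangle^{m+\gamma}\langle 1/\eta\rangle^\epsilon$. Writing the integrand against $h(\eta):=|\eta|^{-1/6}w(\eta)^{-1}$ and applying Cauchy--Schwarz twice gives
$$\int_{\mathbb R}\|n_\eta\|_{L^\infty_y}\,d\eta\le \|h\|_{L^2_\eta}\,\big\|w|\eta|^{1/3}\|n_\eta\|_{L^2_y}\big\|_{L^2_\eta}^{1/2}\big\|w\|\partial_yn_\eta\|_{L^2_y}\big\|_{L^2_\eta}^{1/2}.$$
Moreover $\|h\|_{L^2}<\infty$. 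Near $0$ we have $h^2\sim|\eta|^{-1/3+2\epsilon}$, which is integrable. At infinity $h^2\lesssim\langle\eta\rangle^{-2m-1/3}$, which is integrable since $m>\tfrac12$. The same argument, without the $|\eta|^{1/6}$ trick, also gives an $L^2_\eta$ bound on $w\|n_\eta\|_{L^\infty_y}$ if needed.

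I would then treat the three regions separately.

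\textbf{Region where $|j-\eta|\sim|j|$.} This covers the near-resonant and high-low cases, in which $|\eta|\lesssim|j-\eta|$. All weights $\langle j\rangle^{m+\gamma}$, $\langle 1/j\rangle^\epsilon$ (since $|j|\gtrsim|j-\eta|$ up to constants, or the region is symmetric enough) and the exponential go onto $n_{j-\eta}$. Young's inequality $L^2*L^1\to L^2$ together with the display above then yields the claimed bound.

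\textbf{Near-resonant region with $|j|\sim|j-\eta|$.} Here the low frequency $\eta$ may be very small. This is exactly why $\langle 1/\eta\rangle^{\epsilon}$ sits inside $w$ and why $h\in L^2$ requires $\epsilon<\tfrac12$ and $|\eta|^{1/6}$ only mildly.

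\textbf{Low-high region.} Here $|j|\ll|j-\eta|\sim|\eta|$, and the difficulty is the weight $\langle 1/j\rangle^\epsilon$ with $j$ small. I would put $\langle j\rangle^{m+\gamma}$ and the exponential on $n_\eta$ and estimate the convolution pointwise in $j$ by Cauchy--Schwarz: it is at most $\|n\|_{L^2}\,\|w\|n_\eta\|_{L^\infty_y}\|_{L^2_\eta}$, with $\|n\|_{L^2}\le\|\langle D_x\rangle^{m+\gamma}\langle 1/D_x\rangle^\epsilon n\|_{L^2}$. It then remains to take the $L^2$ norm in $j$ of $\langle 1/j\rangle^\epsilon$ over $|j|\le1$, which is finite because $\epsilon<\tfrac12$. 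For $|j|\ge1$ this weight is harmless and the previous Young argument applies.

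I expect this low-high region to be the main obstacle. It is where the whole-space low-frequency singularity appears, and where one must check that the $|D_x|^{1/3}$ half-power, rather than a full power, can still be placed on the $L^\infty$ factor. The fallback is to apply \eqref{eq:GN1} and then H\"older in $\eta$ with a slightly different distribution of $|\eta|^{1/6}$.
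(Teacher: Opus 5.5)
Your near-resonant argument is correct and matches the paper's $I_1$, but the proposal has two genuine gaps. Write $W:=e^{aA^{-1/3}|D_x|^{2/3}t}\langle D_x\rangle^{m+\gamma}\langle\frac1{D_x}\rangle^\epsilon$.

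\textbf{Missing high-low region.} You say the high-low case has $|\eta|\lesssim|j-\eta|$. In the trichotomy, however, the high-low region is $2|j-\eta|<|j|$. There $j-\eta$ is the lowest frequency and $|j|\sim|\eta|$, and none of your regions covers it. Your region-one argument fails there, because $\langle j\rangle^{m+\gamma}$ cannot be moved onto $n_{j-\eta}$ when $|j-\eta|\ll|j|$. The weights must go onto the $L^\infty_y$ factor $n_\eta$. To produce the $|D_x|^{1/3}$ half-power you must then insert $1\lesssim|j-\eta|^{-1/6}|\eta|^{1/6}$ and apply Young $L^1*L^2$. The $L^1$ factor is controlled by $\big\||\zeta|^{-1/6}\|n_\zeta\|_{L^2_y}\big\|_{L^1_\zeta}\lesssim\|Wn\|_{L^2}$, which holds since $|\zeta|^{-1/6}\langle\zeta\rangle^{-m}\langle\frac1\zeta\rangle^{-\epsilon}\in L^2_\zeta$. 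This is the paper's $I_2$. Your alternative $L^2_\eta$ bound on $w\|n_\eta\|_{L^\infty_y}$ without the $|\eta|^{1/6}$ trick only gives $\|Wn\|^{3/2}\|W\partial_y n\|^{1/2}$. That is not the stated right-hand side, since $\|Wn\|$ is not controlled by $\|W|D_x|^{1/3}n\|$ at low horizontal frequencies.

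\textbf{Low-high region with $|j|\le 1$.} The same defect remains unresolved here. Your pointwise Cauchy--Schwarz gives $\|n\|_{L^2}\|Wn\|^{1/2}\|W\partial_y n\|^{1/2}$, again with no $|D_x|^{1/3}$. The obvious repair $1\lesssim|j-\eta|^{-1/6}|\eta|^{1/6}$ breaks down for $\epsilon<\frac16$, because $|\zeta|^{-1/6}$ is not dominated by $\langle\frac1\zeta\rangle^{\epsilon}$ near $\zeta=0$. The missing idea is to use $|j-\eta|>2|j|$ to shift the excess low-frequency singularity onto the output frequency $j$:
\[
1\le\langle\tfrac{1}{j-\eta}\rangle^{\frac16}|j-\eta|^{\frac16}\lesssim\langle\tfrac1\eta\rangle^{\epsilon}\langle\tfrac{1}{j-\eta}\rangle^{\epsilon}\big(1+\langle\tfrac1j\rangle^{\frac16-2\epsilon}\big)|\eta|^{\frac16}.
\]
After Cauchy--Schwarz in $\eta$, the leftover weight is $\langle\frac1j\rangle^{\epsilon}+\langle\frac1j\rangle^{\frac16-\epsilon}$, which is square integrable near $j=0$ for $0<\epsilon<\frac12$. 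The paper additionally uses $\langle j\rangle^{m+\gamma}\lesssim\langle j-\eta\rangle^{m+\gamma}\langle\eta\rangle^{m}\langle j\rangle^{-m}$ to get decay at large $j$. Your split at $|j|=1$, with Young for $|j|\ge1$, is a legitimate substitute for that step. The $|j|\le1$ piece still needs this transfer, and your fallback does not supply it. This is exactly where the whole-space low-frequency difficulty is resolved.
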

    \begin{proof}
    	We divide the frequency space into three regions as follows:
    	\begin{align*}
    		&\quad\big\|e^{a A^{-\frac{1}{3}}|k-l|^{\frac{2}{3} } t }\langle k-l\rangle^{m + \gamma  } \langle\frac{1}{k-l}\rangle^\epsilon    \int_{\mathbb{R}}\| n_{k-l-\eta}\|_{L_y^2} \|n_{\eta}\|_{L_y^\infty} d\eta \big\|_{L_{k-l}^2} \\
    		& = \big\| e^{a A^{-\frac{1}{3}}|k-l|^{\frac{2}{3} } t }\langle k-l\rangle^{m + \gamma  } \langle\frac{1}{k-l}\rangle^\epsilon    \| n_{k-l-\eta}\|_{L_y^2} \|n_{\eta}\|_{L_y^\infty}  \big\|_{L_{k-l}^2 L_\eta^1}\\
    		&\leq \big\|  e^{a A^{-\frac{1}{3}}|k-l|^{\frac{2}{3} } t }\langle k-l\rangle^{m + \gamma  } \langle\frac{1}{k-l}\rangle^\epsilon    \| n_{k-l-\eta}\|_{L_y^2} \|n_{\eta}\|_{L_y^\infty}   \big\|_{L_{k-l}^2 L_\eta^1({|k-l-\eta|}/{2} \leq |k-l| \leq 2|k-l-\eta|)} \\
    		&\quad +  \big\| e^{a A^{-\frac{1}{3}}|k-l|^{\frac{2}{3} } t }\langle k-l\rangle^{m + \gamma  } \langle\frac{1}{k-l}\rangle^\epsilon    \| n_{k-l-\eta}\|_{L_y^2} \|n_{\eta}\|_{L_y^\infty}  \big\|_{L_{k-l}^2 L_\eta^1(  2|k-l-\eta|<|k-l|  )}\\
    		&\quad +\big\| e^{a A^{-\frac{1}{3}}|k-l|^{\frac{2}{3} } t }\langle k-l\rangle^{m + \gamma  } \langle\frac{1}{k-l}\rangle^\epsilon    \| n_{k-l-\eta}\|_{L_y^2} \|n_{\eta}\|_{L_y^\infty} \big\|_{L_{k-l}^2 L_\eta^1(  2|k-l| < |k-l-\eta| )}\\
    		&=: I_1 + I_2 +I_3.  \alabel{eq:n^2}
    	\end{align*}
     The integration domain is decomposed following the same strategy as in Remark \ref{rem:domain}---namely, by treating 
$k-l$ as a single variable. For $I_1$, using \eqref{eq:GN1} and 
    	\begin{align*}
    		\langle k-l \rangle^{m + \gamma}\langle\frac{1}{k-l}\rangle^\epsilon \lesssim \langle k-l-\eta\rangle^{m + \gamma}\langle\frac{1}{k-l-\eta}\rangle^\epsilon,
    	\end{align*}
    	one gets
    	\begin{align*}
    		I_1 &\lesssim \big\|  e^{a A^{-\frac{1}{3}}|k-l|^{\frac{2}{3} } t } \langle k-l-\eta\rangle^{m + \gamma}\langle\frac{1}{k-l-\eta}\rangle^\epsilon   \| n_{k-l-\eta}\|_{L_y^2} \|n_{\eta}\|_{L_y^\infty}   \big\|_{L_{k-l}^2 L_\eta^1(\mathbb{R}^2)}\\
    		&\lesssim \big\|  e^{a A^{-\frac{1}{3}}|k-l-\eta|^{\frac{2}{3} } t } \langle k-l-\eta\rangle^{m + \gamma}\langle\frac{1}{k-l-\eta}\rangle^\epsilon   \| n_{k-l-\eta}\|_{L_y^2}     \big\|_{L_{k-l-\eta}^2 }       \big\|  e^{a A^{-\frac{1}{3}}|\eta|^{\frac{2}{3} } t }    \|n_{\eta}\|_{L_y^\infty}   \big\|_{L_{\eta}^1 }\\
    		&\lesssim \big\|  e^{a A^{-\frac{1}{3}}|D_x|^{\frac{2}{3} } t } \langle D_x\rangle^{m + \gamma}\langle\frac{1}{D_x}\rangle^\epsilon   n    \|_{L^2 }   \||\eta|^{-\frac16}\langle \eta\rangle^{-m}\langle\frac{1}{\eta}\rangle^{-\epsilon}\big\|_{L_{\eta}^2 }\\
    		&\quad\times \big\|  e^{a A^{-\frac{1}{3}}|\eta|^{\frac{2}{3} } t }    \langle \eta\rangle^m\langle\frac{1}{\eta}\rangle^\epsilon \||\eta|^{\frac13} n_{\eta}\|_{L_y^2}^\frac12  \|\py n_{\eta}\|_{L_y^2}^\frac12   \big\|_{L_{\eta}^2 }\\
    		&\lesssim  \big\|  e^{a A^{-\frac{1}{3}}|D_x|^{\frac{2}{3} } t } \langle D_x\rangle^{m + \gamma}\langle\frac{1}{D_x}\rangle^\epsilon   n    \big\|_{L^2 }
    		\big\|  e^{a A^{-\frac{1}{3}}|D_x|^{\frac{2}{3} } t } \langle D_x\rangle^m\langle\frac{1}{D_x}\rangle^\epsilon |D_x|^\frac13  n    \big\|_{L^2 }^\frac12 \\
    		&\quad\times \big\|  e^{a A^{-\frac{1}{3}}|D_x|^{\frac{2}{3} } t } \langle D_x\rangle^m\langle\frac{1}{D_x}\rangle^\epsilon  \py n    \big\|_{L^2 }^\frac12 \alabel{eq:I_1}.
    	\end{align*}
    	For $I_2$, note that $|k-l| \sim |l|$. Then, based on \eqref{eq:GN1},
    	\begin{align*}
    		\langle k-l \rangle^{m + \gamma}\langle\frac{1}{k-l}\rangle^\epsilon \lesssim \langle \eta\rangle^{m + \gamma}\langle\frac{1}{\eta}\rangle^\epsilon,
    	\end{align*}
    	and $1\lesssim|k-l-\eta|^{-\frac16}|\eta|^\frac16$, we have
    	\begin{align*}
    		I_2 &\lesssim \big\|  e^{a A^{-\frac{1}{3}}|k-l|^{\frac{2}{3} } t } \langle  \eta\rangle^{m + \gamma}\langle\frac{1}{ \eta}\rangle^\epsilon   \| n_{k-l-\eta}\|_{L_y^2} \|n_{\eta}\|_{L_y^\infty}   \big\|_{L_{k-l}^2 L_\eta^1(\mathbb{R}^2)}\\
    		&\lesssim \big\|  e^{a A^{-\frac{1}{3}}|k-l-\eta|^{\frac{2}{3} } t }  |k-l-\eta|^{-\frac16}  \| n_{k-l-\eta}\|_{L_y^2}     \big\|_{L_{k-l-\eta}^1 }\big\|  e^{a A^{-\frac{1}{3}}|\eta|^{\frac{2}{3} } t } \langle  \eta\rangle^{m + \gamma}\langle\frac{1}{ \eta}\rangle^\epsilon  |\eta|^\frac16 \|n_{\eta}\|_{L_y^\infty}   \big\|_{L_{\eta}^2 }\\
    		&\lesssim \big\|  e^{a A^{-\frac{1}{3}}|D_x|^{\frac{2}{3} } t } \langle D_x\rangle^{m + \gamma}\langle\frac{1}{D_x}\rangle^\epsilon |D_x|^\frac13  n    \big\|_{L^2 }^\frac12 \big\|  e^{a A^{-\frac{1}{3}}|D_x|^{\frac{2}{3} } t } \langle D_x\rangle^{m + \gamma}\langle\frac{1}{D_x}\rangle^\epsilon  \py n    \big\|_{L^2 }^\frac12 \\
    		&\quad\times \big\|  e^{a A^{-\frac{1}{3}}|k-l-\eta|^{\frac{2}{3} } t } \langle k-l-\eta\rangle^m\langle\frac{1}{k-l-\eta}\rangle^\epsilon   \| n_{k-l-\eta}\|_{L_y^2}     \big\|_{L_{k-l-\eta}^2 } \\
    		&\quad\times \big\||k-l-\eta|^{-\frac16} \langle k-l-\eta\rangle^{-m}\langle\frac{1}{k-l-\eta}\rangle^{-\epsilon}  \big\|_{L_{k-l-\eta}^2 } \\
    		&\lesssim  \big\|  e^{a A^{-\frac{1}{3}}|D_x|^{\frac{2}{3} } t } \langle D_x\rangle^m\langle\frac{1}{D_x}\rangle^\epsilon   n    \big\|_{L^2 }
    		\big\|  e^{a A^{-\frac{1}{3}}|D_x|^{\frac{2}{3} } t } \langle D_x\rangle^{m + \gamma}\langle\frac{1}{D_x}\rangle^\epsilon |D_x|^\frac13  n    \big\|_{L^2 }^\frac12 \\
    		&\quad\times \big\|  e^{a A^{-\frac{1}{3}}|D_x|^{\frac{2}{3} } t } \langle D_x\rangle^{m + \gamma}\langle\frac{1}{D_x}\rangle^\epsilon  \py n    \big\|_{L^2 }^\frac12. \alabel{eq:I_2}
    	\end{align*}
    	At last, we estimate $I_3$. It follows from \eqref{eq:GN1}, 
        $$
        \langle k-l\rangle^{m + \gamma} \lesssim \langle k-l -\eta \rangle^{m + \gamma}\langle \eta\rangle^m\langle k-l\rangle^{-m},
        $$
        and
    	\begin{align*} 
    		1\leq  \langle \frac{1}{k-l-\eta} \rangle^\frac16 |k-l-\eta|^{\frac16} \lesssim \langle\frac{1}{\eta}\rangle^\epsilon\langle\frac{1}{k-l-\eta}\rangle^\epsilon\left(1 +\langle\frac{1}{k-l}\rangle^{\frac{1}{6}-2 \epsilon} \right)|\eta|^{\frac16}
    	\end{align*} 
    	that
    	\begin{align*}
    		I_3 &\lesssim \big\|  e^{a A^{-\frac{1}{3}}|k-l|^{\frac{2}{3} } t }\langle k-l -\eta \rangle^{m + \gamma}\langle \eta\rangle^m\langle k-l\rangle^{-m}\\
    		&\quad\times  \langle\frac{1}{\eta}\rangle^\epsilon\langle\frac{1}{k-l-\eta}\rangle^\epsilon\left(\langle\frac{1}{k-l}\rangle^{\epsilon} +\langle\frac{1}{k-l}\rangle^{\frac{1}{6}- \epsilon} \right)
    		|\eta|^{\frac16} \| n_{k-l-\eta}\|_{L_y^2} \|n_{\eta}\|_{L_y^\infty}   \big\|_{L_{k-l}^2 L_\eta^1(\mathbb{R}^2)}\\
    		&\lesssim \big\|  e^{a A^{-\frac{1}{3}}|k-l-\eta|^{\frac{2}{3} } t }  \langle k-l -\eta \rangle^{m + \gamma} \langle\frac{1}{k-l-\eta}\rangle^\epsilon \| n_{k-l-\eta}\|_{L_y^2}     \big\|_{L_{k-l-\eta}^2 }\big\|  e^{a A^{-\frac{1}{3}}|\eta|^{\frac{2}{3} } t } \langle  \eta\rangle^m\langle\frac{1}{ \eta}\rangle^\epsilon  |\eta|^\frac16 \|n_{\eta}\|_{L_y^\infty}   \big\|_{L_{\eta}^2 }\\
    		&\quad\times  \big\| \left(\langle\frac{1}{k-l}\rangle^{\epsilon} +\langle\frac{1}{k-l}\rangle^{\frac{1}{6}- \epsilon} \right) \langle k-l\rangle^{-m}\big\|_{L_{k-l}^2}\\
    		&\lesssim  \big\|  e^{a A^{-\frac{1}{3}}|D_x|^{\frac{2}{3} } t } \langle D_x\rangle^{m + \gamma}\langle\frac{1}{D_x}\rangle^\epsilon   n    \big\|_{L^2 }
    		\big\|  e^{a A^{-\frac{1}{3}}|D_x|^{\frac{2}{3} } t } \langle D_x\rangle^m\langle\frac{1}{D_x}\rangle^\epsilon |D_x|^\frac13  n    \big\|_{L^2 }^\frac12 \\
    		&\quad\times \big\|  e^{a A^{-\frac{1}{3}}|D_x|^{\frac{2}{3} } t } \langle D_x\rangle^m\langle\frac{1}{D_x}\rangle^\epsilon  \py n    \big\|_{L^2 }^\frac12 \alabel{eq:I_3}.
    	\end{align*}
    	 Combining \eqref{eq:I_1}--\eqref{eq:I_3} with \eqref{eq:n^2}, one completes the proof.
    \end{proof}
    
    \subsection{The energy estimate of $n$.} 
    For the PKSNS system near a Couette flow \eqref{eq:main}, we have the following estimate.
    
    \begin{Lem} \label{lem:est of n}
    	Assume that $0\leq t \leq T$ and $0<a<\frac{1}{16(1+2 \pi)}$. The estimate
    	\begin{align*} 
    			&\quad\|  \langle D_x\rangle^{m+\frac13}  \langle \frac{1}{D_x}\rangle^\epsilon   n  \|_{X_a}^2 \\
                &\lesssim  \|  \langle D_x\rangle^{m +\frac13}  \langle \frac{1}{D_x}\rangle^\epsilon   n_{\mathrm{in}} \|_{L^2}^2 +  \|  \langle D_x\rangle^{m+\frac13}  \langle \frac{1}{D_x}\rangle^\epsilon   n \|_{X_a}^2  \\
    			& \qquad \times \lt(   \frac{1}{A^\frac12} \|  \langle D_x\rangle^m  \langle \frac{1}{D_x}\rangle^\epsilon   \omega \|_{X_a}  + \frac{|\mu|}{A^\frac13}  \|  \langle D_x\rangle^{m+\frac13}   \langle \frac{1}{D_x}\rangle^\epsilon   n \|_{X_a} + \frac{|\kappa|}{A^\frac16}  \|  \langle D_x\rangle^{m+\frac13}   \langle \frac{1}{D_x}\rangle^\epsilon   n \|_{X_a}^2 \rt)
    	\end{align*}
        holds under either of the following assumptions:
    \begin{enumerate}
        \item[(i)] $\alpha > 0$ and $0<\epsilon<\frac12<m$;
        \item[(ii)] $\alpha = 0$ and $\frac13<\epsilon<\frac12<m$.
    \end{enumerate}
    \end{Lem}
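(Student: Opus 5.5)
The plan is to apply Proposition \ref{lem:est of f} to $f=n$ with
$$g=-\frac1A\nabla\cdot\big[n(\kappa n+\mu)\nabla c\big],$$
run at regularity index $m+\frac13$ in place of $m$; this is allowed since $m+\frac13>\frac12$. That proposition, however, pairs the transport term with $\omega$ at the same index. So I would not use it verbatim for the $u\cdot\nabla n$ part.

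\textbf{Transport term.} I would redo that step by hand. Use $\langle k\rangle^{2/3}\lesssim 1+|k|^{2/3}$ to split the pairing of $\frac1A u\cdot\nabla n$ against $\mathcal M e^{2aA^{-1/3}|D_x|^{2/3}t}\langle D_x\rangle^{2m+\frac23}\langle\frac1{D_x}\rangle^{2\epsilon}n$ into two pieces. The piece carrying the weight $1$ is exactly what Proposition \ref{lem:est of f} handles at index $m$. In the piece carrying $|D_x|^{2/3}$, move one factor $|D_x|^{1/3}$ onto $u\cdot\nabla n$ and use $|k|^{1/3}\le |l|^{1/3}+|k-l|^{1/3}$ on the convolution. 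This splits it into a term $u\cdot\nabla |D_x|^{1/3}n$ and a term $(|D_x|^{1/3}u)\cdot\nabla n$.
\begin{itemize}
\item The first is treated as in Proposition \ref{lem:est of f} applied to $|D_x|^{1/3}n$, using that $u\cdot\nabla$ is antisymmetric up to commutators with the multipliers.
\item The second is exactly Lemma \ref{lem:est of dx u}.
\end{itemize}
Both are bounded by $A^{-1/2}\|\langle D_x\rangle^m\langle\frac1{D_x}\rangle^\epsilon\omega\|_{X_a}\|\langle D_x\rangle^{m+\frac13}\langle\frac1{D_x}\rangle^\epsilon n\|_{X_a}^2$, which is the first contribution claimed in the lemma.

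\textbf{Chemotactic term, common first step.} Integrate by parts once and use \eqref{eq:bound of M}. The contribution is then at most
$$\frac1A\int_0^t \big\|e^{\cdots}\langle D_x\rangle^{m+\frac13}\langle\tfrac1{D_x}\rangle^\epsilon\big(n(\kappa n+\mu)\nabla c\big)\big\|_{L^2}\,\big\|e^{\cdots}\langle D_x\rangle^{m+\frac13}\langle\tfrac1{D_x}\rangle^\epsilon\nabla n\big\|_{L^2}\,dt .$$
The $\nabla n$ factor contributes $A^{1/2}\|\cdot\|_{X_a}$ after Cauchy--Schwarz in time.

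\textbf{The $\mu$-term $n\nabla c$.} I would expand this product in horizontal Fourier variables. I would then repeat the three-region decomposition of Lemma \ref{lem:a preliminary step} with $\nabla c_\eta$ playing the role of $n_\eta$. For the $L^\infty_y$ norm of $\nabla c_\eta$, use \eqref{eq:GN2} or \eqref{eq:GN1} together with Lemma \ref{lem:Elliptic estimate}, which trades $\nabla^2 c$ for $n$. This way no $\partial_y n$ is spent, only two half-powers of $|D_x|^{1/3}n$. For $\alpha=0$, the low-frequency singularity $|\eta|^{-1}$ of $\nabla c_\eta$ is absorbed by requiring $\||\eta|^{-5/6}\langle\eta\rangle^{-m}\langle\frac1\eta\rangle^{-\epsilon}\|_{L^2_\eta}<\infty$, which is precisely where $\epsilon>\frac13$ enters.

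In time I would take $\|n\|_{L^\infty L^2}$ times the two half-powers of $|D_x|^{1/3}n$, which lie in $L^2L^2$ with size $A^{1/6}$. The count is then $A^{-1}\cdot A^{1/6}\cdot A^{1/2}=A^{-1/3}$, giving $\frac{|\mu|}{A^{1/3}}\|\cdot\|_{X_a}^3$.

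\textbf{The $\kappa$-term $n^2\nabla c$.} Here I would first bound $\nabla c$ in $L^\infty$, uniformly in the weights, by the same elliptic and Fourier-$L^1$ argument, so that it costs only $\|\cdot\|_{X_a}$ in $L^\infty_t$. The weighted $n^2$ is then handled by Lemma \ref{lem:a preliminary step} with $\gamma=\frac13$; this costs $\|n\|\,\||D_x|^{1/3}n\|^{1/2}\|\partial_y n\|^{1/2}$. In time this gives $A^{1/12}\cdot A^{1/4}=A^{1/3}$, and the total is $A^{-1}A^{1/3}A^{1/2}=A^{-1/6}$, with a quartic power of the norm, as claimed.

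\textbf{Main obstacle.} I expect the main difficulty to be the trilinear $\kappa$-term: the distribution of weights among three horizontal frequencies, and in particular the low-frequency factor $\langle\frac1{k}\rangle^\epsilon$ when the output frequency is smallest. I would first try to reduce it to the bilinear Lemma \ref{lem:a preliminary step} by treating $\nabla c$ as an algebra multiplier in the Wiener-type norm $\|e^{\cdots}\langle\eta\rangle^{m+\frac13}\langle\frac1\eta\rangle^\epsilon\,\widehat{\cdot}\,\|_{L^1_\eta L^\infty_y}$. This norm is controlled by $\|n\|_{X_a}$ via the elliptic estimate and Cauchy--Schwarz in $\eta$.
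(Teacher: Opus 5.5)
Your treatment of the transport term and of the $\mu$-term matches the paper in substance. The paper splits off $|D_x|^{1/3}n$, applies Proposition \ref{lem:est of f} to its equation together with Lemma \ref{lem:est of dx u}, and quotes the $\mu$-bounds from earlier work. Your time count $A^{-1}A^{1/3}A^{1/2}=A^{-1/6}$ for the $\kappa$-term is also the right target.

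The gap is the step meant to deliver that count. You propose to reduce $\kappa n^2\nabla c$ to Lemma \ref{lem:a preliminary step} by treating $\nabla c$ as a multiplier in the Wiener norm $W(f)=\|e^{aA^{-1/3}|\eta|^{2/3}t}\langle\eta\rangle^{m+\frac13}\langle\frac1\eta\rangle^\epsilon\|f_\eta\|_{L^\infty_y}\|_{L^1_\eta}$. Both ingredients of this reduction fail.

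\textbf{First ingredient.} $W(\nabla c)$ is not bounded by $\|\langle D_x\rangle^{m+\frac13}\langle\frac1{D_x}\rangle^\epsilon n\|_{L^\infty L^2}$. Estimate \eqref{eq:GN2} with Lemma \ref{lem:Elliptic estimate} gives exactly $|\eta|^{\frac12}\|\nabla_\eta c_\eta\|_{L^\infty_y}\le\|n_\eta\|_{L^2_y}$, and with only $L^2_y$ information on $n_\eta$ no better power of $|\eta|$ is available. Once the full weight is spent, Cauchy--Schwarz in $\eta$ therefore needs $|\eta|^{-\frac12}\in L^2_\eta$. This diverges logarithmically at infinity, and for $\alpha=0$ also at the origin.

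\textbf{Second ingredient.} Let $N$ be the corresponding weighted $L^2$ norm. A bound $N(fg)\lesssim W(f)N(g)$ requires $w(k)\lesssim w(l)w(k-l)$. The exponential and $\langle\cdot\rangle^{m+\frac13}$ factors satisfy this, but $\langle\frac1k\rangle^\epsilon$ does not: it blows up as $k\to0$ while $l\approx-(k-l)$ stays of size one. Concretely, if $f_l$ concentrates near $l=1$, then $(fg)_k\approx g_{k-1}$. In that case $\int_{|k|<1}|k|^{-2\epsilon}|g_{k-1}|^2\,dk$ is not controlled by $\|g\|_{L^2}$; take $|g_j|\sim|j+1|^{-\frac12+\delta}$ with $0<\delta<\epsilon$. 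So the reduction breaks exactly in the low-output region $2|k|<|k-l|$ that you flagged. You name the obstacle but do not resolve it.

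What is needed, and what the paper does, is to choose the norm on $\nabla c_l$ region by region rather than use one algebra norm.
\begin{enumerate}
\item[(a)] For $\frac12|k-l|\le|k|\le2|k-l|$, all polynomial and $\langle\frac1\cdot\rangle^\epsilon$ weight goes onto $n^2$, via Lemma \ref{lem:a preliminary step} with $\gamma=\frac13$. Here $\nabla c$ is taken in $L^1_lL^\infty_y$ with only the exponential weight. This is finite by \eqref{eq:temp0.341}, because the spare factor $\langle l\rangle^m\langle\frac1l\rangle^\epsilon$ gives $\||l|^{-\frac12}\langle l\rangle^{-m}\langle\frac1l\rangle^{-\epsilon}\|_{L^2_l}<\infty$.
\item[(b)] For $|k|>2|k-l|$, the weight must sit on $\nabla c$, measured in $L^2_l$ through $|l|^{\frac12}\|\nabla_lc_l\|_{L^\infty_y}$. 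The factor $|l|^{\frac12}$ is paid for by $1\lesssim|k-l|^{-\frac12}|l|^{\frac12}$, which places $n^2$ in $L^1_{k-l}$ (Lemma \ref{lem:a preliminary step} with $\gamma=0$).
\item[(c)] For $2|k|<|k-l|$, use the same $L^2_l$ bound on $\nabla c$ together with the transfer $1\lesssim\langle\frac1l\rangle^\epsilon\langle\frac1{k-l}\rangle^\epsilon\bigl(1+\langle\frac1k\rangle^{\frac12-2\epsilon}\bigr)|l|^{\frac12}$. Then apply the $L^2$--$L^2$ convolution bound $\sup_k\int F(l)G(k-l)\,dl\le\|F\|_{L^2}\|G\|_{L^2}$. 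Finally use $\|(\langle\frac1k\rangle^\epsilon+\langle\frac1k\rangle^{\frac12-\epsilon})\langle k\rangle^{-m}\|_{L^2_k}<\infty$, which is where $\epsilon<\frac12<m$ enters.
\end{enumerate}
The paper runs this inside the trilinear pairing, so one factor $\langle\frac1k\rangle^\epsilon$ stays on $\nabla_kn_k$. The same three-case argument also works after your Cauchy--Schwarz step, since the resulting output kernel is the same. It yields the $\frac{|\kappa|}{A^{1/6}}\|\langle D_x\rangle^{m+\frac13}\langle\frac1{D_x}\rangle^\epsilon n\|_{X_a}^4$ bound you were aiming for.
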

    
    \begin{proof}
    	Note that
    	\begin{equation}\begin{aligned} \label{eq:temp0.12}
    			\|  \langle D_x\rangle^{m+\frac13}  \langle \frac{1}{D_x}\rangle^\epsilon   n  \|_{X_a}^2 \lesssim \|  \langle D_x\rangle^{m }  \langle \frac{1}{D_x}\rangle^\epsilon   n  \|_{X_a}^2 + \|  \langle D_x\rangle^{m }  \langle \frac{1}{D_x}\rangle^\epsilon  |D_x|^\frac13 n  \|_{X_a}^2.
    	\end{aligned}\end{equation}
    	One applies $|D_x|^\frac13$ to $\eqref{eq:main}_1$ and has that
    	\begin{equation}\begin{aligned} \label{eq:dx 13 n}
    			\partial_t |D_x|^\frac13  n+y \partial_x |D_x|^\frac13  n-\frac{1}{A} \Delta |D_x|^\frac13  n=-\frac{1}{A} |D_x|^\frac13   \nabla \cdot  \lt[n \lt(\kappa n +\mu\rt) \nabla c\rt]  - \frac1A |D_x|^\frac13  (u \cdot \nabla n).
    	\end{aligned}\end{equation}
    	Since 
    	\begin{align*}
    		\big(|D_x|^\frac13 (fg) \big| |D_x|^\frac13 h\big) &\lesssim   \big( |k| ^\frac13 \widehat{fg} \big|  |k| ^\frac13 \hat{h} \big) \\
    		&\lesssim \int_{\mathbb{R}^3}  |k-l|^\frac13|\hat{f}(k-l, \xi) \hat{g}(l, \xi)| |k| ^\frac13 |\hat{h}(k)| dkdld \xi\\
    		&\quad  +  \int_{\mathbb{R}^3}   |l|^ \frac13|\hat{f}(k-l, \xi) \hat{g}(l, \xi)| |k| ^\frac13 |\hat{h}(k)| dkdld\xi
    	\end{align*}
    	holds for some functions $f$, $g$ and $h$, we formally decompose $ |D_x|^\frac13  (u \cdot \nabla n) $ into $u \cdot \nabla |D_x|^\frac13  n $ and $| D_x |^\frac13  u \cdot \nabla n$ in $L^2$ energy estimate. Hence, by applying Proposition \ref{lem:est of f} to \eqref{eq:dx 13 n}, one has
    	\begin{align*} 
    			&\quad\|  \langle D_x\rangle^m  \langle \frac{1}{D_x}\rangle^\epsilon   |D_x|^\frac13  n  \|_{X_a}^2 \\ &\lesssim   \|  \langle D_x\rangle^m  \langle \frac{1}{D_x}\rangle^\epsilon   |D_x|^\frac13  n_{\mathrm{in}} \|_{L^2}^2 + 	\frac{1}{A^\frac12}  \|  \langle D_x\rangle^m  \langle \frac{1}{D_x}\rangle^\epsilon   \omega \|_{X_a}   \|  \langle D_x\rangle^m  \langle \frac{1}{D_x}\rangle^\epsilon   |D_x|^\frac13  n \|_{X_a}^2 \\
    			&\quad +  \frac1A \int_{0}^{t} \left|\Re\left(  | D_x |^\frac13  u \cdot \nabla n \left\lvert\, \mathcal{M} e^{2 a A^{-\frac{1}{3}}\left|D_x\right|^{\frac{2}{3} }t }\langle D_x\rangle^{2 m}\langle\frac{1}{D_x}\rangle^{2 \epsilon} |D_x|^\frac13  n \right.\right)\right| dt \\
                &\quad + \frac1A \int_{0}^{t} \left|\Re\left( |D_x|^\frac13  (\nabla \cdot(\mu n \nabla c) ) \left\lvert\, \mathcal{M} e^{2 a A^{-\frac{1}{3}}\left|D_x\right|^{\frac{2}{3} }t }\langle D_x\rangle^{2 m}\langle\frac{1}{D_x}\rangle^{2 \epsilon} |D_x|^\frac13  n\right.\right)\right| dt\\
        			&\quad + \frac1A \int_{0}^{t} \left|\Re\left( |D_x|^\frac13  (\nabla \cdot(\kappa n^2 \nabla c) ) \left\lvert\, \mathcal{M} e^{2 a A^{-\frac{1}{3}}\left|D_x\right|^{\frac{2}{3} }t }\langle D_x\rangle^{2 m}\langle\frac{1}{D_x}\rangle^{2 \epsilon} |D_x|^\frac13  n\right.\right)\right| dt.\alabel{eq:temp001}
    	\end{align*}
    	Using Lemma \ref{lem:est of dx u}, one gets
    	\begin{equation}\begin{aligned}   \label{eq:temp002} 
    			&\quad\frac1A \int_{0}^{t} \left|\Re\left(|D_x|^{\frac13}u \cdot \nabla n \left\lvert\, \mathcal{M} e^{2 a A^{-\frac{1}{3}}\left|D_x\right|^{\frac{2}{3} }t }\langle D_x\rangle^{2 m   }\langle\frac{1}{D_x}\rangle^{2 \epsilon} |D_x|^\frac13  n  \right.\right)\right|   dt  \\
    			&\lesssim \frac1{A^\frac12} \| \langle D_x\rangle^m\langle\frac{1}{D_x}\rangle^\epsilon \omega \|_{X_a}    \| \langle D_x\rangle^m\langle\frac{1}{D_x}\rangle^\epsilon |D_x|^\frac13 n \|_{X_a}^2.
    	\end{aligned}\end{equation}
    	Similarly, applying Proposition \ref{lem:est of f} to $\eqref{eq:main}_1$, we have
    	\begin{align*} 
    			&\quad\|  \langle D_x\rangle^m  \langle \frac{1}{D_x}\rangle^\epsilon     n  \|_{X_a}^2 \\ &\lesssim   \|  \langle D_x\rangle^m  \langle \frac{1}{D_x}\rangle^\epsilon     n_{\mathrm{in}} \|_{L^2}^2 + 	\frac{1}{A^\frac12}  \|  \langle D_x\rangle^m  \langle \frac{1}{D_x}\rangle^\epsilon   \omega \|_{X_a}   \|  \langle D_x\rangle^m  \langle \frac{1}{D_x}\rangle^\epsilon      n \|_{X_a}^2 \\
                &\quad + \frac1A \int_{0}^{t} \left|\Re\left(  \nabla \cdot(\mu n \nabla c)   \left\lvert\, \mathcal{M} e^{2 a A^{-\frac{1}{3}}\left|D_x\right|^{\frac{2}{3} }t }\langle D_x\rangle^{2 m}\langle\frac{1}{D_x}\rangle^{2 \epsilon}   n\right.\right)\right| dt\\
    			&\quad + \frac1A \int_{0}^{t} \left|\Re\left(  \nabla \cdot(\kappa n^2 \nabla c)   \left\lvert\, \mathcal{M} e^{2 a A^{-\frac{1}{3}}\left|D_x\right|^{\frac{2}{3} }t }\langle D_x\rangle^{2 m}\langle\frac{1}{D_x}\rangle^{2 \epsilon}   n\right.\right)\right| dt.
                \alabel{eq:temp003}
    	\end{align*}
        Regarding the term $\nabla\cdot (n\nabla c)$, we invoke (3.6) (for $\alpha>0$) and (3.8) (for $\alpha = 0$) from \cite{CWY2025b}, and  correspondingly under the conditions $0<\epsilon<\frac12<m$ and $\frac13<\epsilon<\frac12<m$, we obtain
\begin{align}
    \frac1A \int_{0}^{t} \left|\Re\left(  \nabla \cdot(\mu n \nabla c)   \left\lvert\, \mathcal{M} e^{2 a A^{-\frac{1}{3}}\left|D_x\right|^{\frac{2}{3} }t }\langle D_x\rangle^{2 m}\langle\frac{1}{D_x}\rangle^{2 \epsilon}   n\right.\right)\right| dt \lesssim \frac{|\mu|}{A^\frac13} \|  \langle D_x\rangle^m  \langle \frac{1}{D_x}\rangle^\epsilon     n  \|_{X_a}^3.
\end{align}
Here, the $X_a$-norm follows the definition in \cite[(1.6)]{CWY2025b} with parameters $\Xi = 2\pi$, $\theta_1 = \frac14$, $\theta_2 =\frac12$ and $0<a<\frac{1}{16(1+2 \pi)}$. Similarly, for $|D_x|^\frac13 \nabla\cdot (n\nabla c)$, utilizing (4.9) and (4.10) in the same reference yields
\begin{align}  
    \frac1A \int_{0}^{t} \left|\Re\left(  |D_x|^\frac13 \nabla \cdot(\mu n \nabla c)   \left\lvert\, \mathcal{M} e^{2 a A^{-\frac{1}{3}}\left|D_x\right|^{\frac{2}{3} }t }\langle D_x\rangle^{2 m}\langle\frac{1}{D_x}\rangle^{2 \epsilon} |D_x|^\frac13  n\right.\right)\right| dt \lesssim \frac{|\mu|}{A^\frac13} \|  \langle D_x\rangle^{m+ \frac13} \langle \frac{1}{D_x}\rangle^\epsilon     n  \|_{X_a}^3.
    \label{eq:temp004}
\end{align}

    	For simplicity, we combine the last term of \eqref{eq:temp001} and the last term of \eqref{eq:temp003} into a single term for estimation as follows:
    	\begin{align*} 
    			&\quad \frac1A\left|\Re\left( \nabla\cdot(\kappa n^2\nabla c) \left\lvert\, \mathcal{M} e^{2 a A^{-\frac{1}{3}}\left|D_x\right|^{\frac{2}{3} }t }\langle D_x\rangle^{2 m   }\langle\frac{1}{D_x}\rangle^{2 \epsilon} n\right.\right)\right| \\
    			& \quad +  \frac1A \left|\Re\left( |D_x|^{\frac13} ( \nabla\cdot(\kappa n^2\nabla c) ) \left\lvert\, \mathcal{M} e^{2 a A^{-\frac{1}{3}}\left|D_x\right|^{\frac{2}{3} }t }\langle D_x\rangle^{2 m  }\langle\frac{1}{D_x}\rangle^{2 \epsilon} |D_x|^{\frac13} n\right.\right)\right|\\
    			& \lesssim\frac1A \left|\Re\left(   \nabla\cdot(\kappa n^2 \nabla c) \left\lvert\, \mathcal{M} e^{2 a A^{-\frac{1}{3}}\left|D_x\right|^{\frac{2}{3} }t }\langle D_x\rangle^{2 m + \frac23 }\langle\frac{1}{D_x}\rangle^{2 \epsilon}    n\right.\right)\right|\\
    			&\lesssim \frac{|\kappa|} A \int_{\mathbb{R}^3} e^{2 a A^{-\frac{1}{3}}|k|^{\frac{2}{3} } t }\langle k\rangle^{2 m +\frac23 }\langle\frac{1}{k}\rangle^{2 \epsilon}\left|\int_{\mathbb{R}} \mathcal{M}\left(k, D_y\right) \nabla_k n_k(y) \cdot\nabla_l c_l(y)  n_{k-l-\eta}(y) n_{\eta}(y)   d y\right| d k d ld\eta . \alabel{eq:Fourier}
    	\end{align*}
    	
    	In the sequel, using Lemma \ref{lem:Elliptic estimate}, we formally estimate $\| \nabla^2 c\|_{L^2}$ via $\| n\|_{L^2}$ for both the $\alpha > 0$ and $\alpha = 0$ cases. Furthermore, we partition the integration domain into three regions, as outlined in Remark \ref{rem:domain}.
    	
    	$\bullet$~Case 1:   $\frac{|k-l|}{2} \leq |k| \leq 2|k-l|$. By applying
    	\begin{align*}
    		\langle k\rangle^{m+\frac13}\langle\frac{1}{k}\rangle^\epsilon \lesssim \langle k-l\rangle^{m+\frac13}\langle\frac{1}{k-l}\rangle^\epsilon,
    	\end{align*}
        we have
    	\begin{align*}  
    			&\quad \int_{\frac{|k-l|}{2} \leq|k| \leq 2|k-l|} e^{2 a A^{-\frac{1}{3}}|k|^{\frac{2}{3} } t }\langle k\rangle^{2 m +\frac23 } \langle\frac{1}{k}\rangle^{2 \epsilon}   \left|   \int_{\mathbb{R}} \mathcal{M}\left(k, D_y\right) \nabla_k n_k(y) \cdot\nabla_l c_l(y)   n_{k-l-\eta}(y) n_{\eta}(y) d y\right| d k d l d\eta \\
    			&\lesssim \big\|e^{a A^{-\frac{1}{3}}|k|^{\frac{2}{3} }t}\langle k\rangle^{m+\frac13 }\langle\frac{1}{k}\rangle^\epsilon   \|\nabla_k n_k\|_{L_y^2}\big\|_{L_k^2}     \big\|e^{a A^{-\frac{1}{3}}|l|^{\frac{2}{3} } t } \| \nabla_l c_l \|_{L_y^{\infty}}\big\|_{L_l^1} \\
    			&\quad \times\big\|e^{a A^{-\frac{1}{3}}|k-l|^{\frac{2}{3} } t }\langle k-l\rangle^{m +\frac13 } \langle\frac{1}{k-l}\rangle^\epsilon    \int_{\mathbb{R}}\| n_{k-l-\eta}\|_{L_y^2} \|n_{\eta}\|_{L_y^\infty} d\eta \big\|_{L_{k-l}^2} \\
    			&\lesssim \big\|e^{a A^{-\frac{1}{3}}\left|D_x\right|^{\frac{2}{3} } t }\langle D_x\rangle^{m +\frac13 }\langle\frac{1}{D_x}\rangle^\epsilon \nabla n \big\|_{L^2}   \big\|e^{a A^{-\frac{1}{3}}\left|D_x\right|^{\frac{2}{3} } t }\langle D_x\rangle^m\langle\frac{1}{D_x}\rangle^\epsilon n\big\|_{L^2}     \\
    			&\quad \times  \big\|e^{a A^{-\frac{1}{3}}|k-l|^{\frac{2}{3} } t }\langle k-l\rangle^{m +\frac13  } \langle\frac{1}{k-l}\rangle^\epsilon    \int_{\mathbb{R}}\| n_{k-l-\eta}\|_{L_y^2} \|n_{\eta}\|_{L_y^\infty} d\eta \big\|_{L_{k-l}^2} , \alabel{eq:case1}
    	\end{align*}
    	where we used \eqref{eq:GN2} and Lemma \ref{lem:Elliptic estimate} to get
    	\begin{align*}
    			 \big\|e^{a A^{-\frac{1}{3}}|l|^{\frac{2}{3} } t }     \| \nabla_l c_l \|_{L_y^{\infty}}\big\|_{L_l^1} 
    			& \leq \big\|e^{a A^{-\frac{1}{3}}|l|^{\frac{2}{3} }t}\langle l\rangle^m\langle\frac{1}{l}\rangle^\epsilon   |l|^\frac12 \|   \nabla_l c_l \|_{L_y^\infty}  \big\|_{L_l^2}\big\| |l|^{-\frac12} \langle l\rangle^{-m}\langle\frac{1}{l}\rangle^{-\epsilon} \big\|_{L_l^2}  \\
    			&\lesssim  \|e^{a A^{-\frac{1}{3}}\left|D_x\right|^{\frac{2}{3} }t}\langle D_x\rangle^m\langle\frac{1}{D_x}\rangle^\epsilon  n\|_{L^2} \alabel{eq:temp0.341}.
    	\end{align*}
    	Applying Lemma \ref{lem:a preliminary step} (choosing $\gamma=\frac13$) to \eqref{eq:case1}, we get
    	\begin{align*}  
    		&\quad \int_{\frac{|k-l|}{2} \leq|k| \leq 2|k-l|} e^{2 a A^{-\frac{1}{3}}|k|^{\frac{2}{3} } t }\langle k\rangle^{2 m +\frac23 } \langle\frac{1}{k}\rangle^{2 \epsilon}   \left|   \int_{\mathbb{R}} \mathcal{M}\left(k, D_y\right) \nabla_k n_k(y) \cdot\nabla_l c_l(y)   n_{k-l-\eta}(y) n_{\eta}(y) d y\right| d k d l d\eta \\
    		&\lesssim \big\|e^{a A^{-\frac{1}{3}}\left|D_x\right|^{\frac{2}{3} } t }\langle D_x\rangle^{m +\frac13 }\langle\frac{1}{D_x}\rangle^\epsilon \nabla n \big\|_{L^2}   \big\|e^{a A^{-\frac{1}{3}}\left|D_x\right|^{\frac{2}{3} } t }\langle D_x\rangle^m\langle\frac{1}{D_x}\rangle^\epsilon n\big\|_{L^2}  \big\|e^{a A^{-\frac{1}{3}}\left|D_x\right|^{\frac{2}{3} } t }\langle D_x\rangle^{m+\frac13}\langle\frac{1}{D_x}\rangle^\epsilon n\big\|_{L^2}    \\
    		&\quad \times  \big\|  e^{a A^{-\frac{1}{3}}|D_x|^{\frac{2}{3} } t } \langle D_x\rangle^{m+\frac13}\langle\frac{1}{D_x}\rangle^\epsilon |D_x|^\frac13  n    \big\|_{L^2 }^\frac12 \big\|  e^{a A^{-\frac{1}{3}}|D_x|^{\frac{2}{3} } t } \langle D_x\rangle^{m+\frac13}\langle\frac{1}{D_x}\rangle^\epsilon  \py n    \big\|_{L^2 }^\frac12 . \alabel{case1}
    	\end{align*}

    	$\bullet$~Case 2:   $2|k-l|<|k|$.
    	By $1\lesssim|k-l|^{-\frac12} |l|^\frac12$, Lemma \ref{lem:a preliminary step} ($\gamma=0$) and
    	\begin{align*}
    		\langle k\rangle^{m+\frac13}\langle\frac{1}{k}\rangle^\epsilon \lesssim \langle l\rangle^{m+\frac13}\langle\frac{1}{l}\rangle^\epsilon,
    	\end{align*}
    	we get
    	\begin{align*}   
    			&\quad\int_{2|k-l|<|k|}e^{2 a A^{-\frac{1}{3}}|k|^{\frac{2}{3} } t }\langle k\rangle^{2 m  +\frac23} \langle\frac{1}{k}\rangle^{2 \epsilon}   \left|   \int_{\mathbb{R}} \mathcal{M}\left(k, D_y\right) \nabla_k n_k(y) \cdot\nabla_l c_l(y)   n_{k-l-\eta}(y) n_{\eta}(y) d y\right| d k d l d\eta  \\
    			&\lesssim  \big\|e^{a A^{-\frac{1}{3}}|k|^{\frac{2}{3} }t}\langle k\rangle^{m +\frac13 }\langle\frac{1}{k}\rangle^\epsilon  \| \nabla_k n_k\|_{L_y^2}\big\|_{L_k^2}     \big\|e^{a A^{-\frac{1}{3}}|l|^{\frac{2}{3} } t } \langle l\rangle^{m  +\frac13  }\langle\frac{1}{l}\rangle^\epsilon |l|^\frac12 \| \nabla_l c_l \|_{L_y^{\infty}}\big\|_{L_l^2} \\
    			&\quad \times\big\|e^{a A^{-\frac{1}{3}}|k-l|^{\frac{2}{3} } t }\langle k-l\rangle^{m  } \langle\frac{1}{k-l}\rangle^\epsilon    \int_{\mathbb{R}}\| n_{k-l-\eta}\|_{L_y^2} \|n_{\eta}\|_{L_y^\infty} d\eta \big\|_{L_{k-l}^2} 
    			\big\||k-l|^{-\frac12}\langle k-l\rangle^{-m}\langle\frac{1}{k-l}\rangle^{-\epsilon}\big\|_{L_{k-l}^2}\\
    			&\lesssim \big\|e^{a A^{-\frac{1}{3}}\left|D_x\right|^{\frac{2}{3} } t }\langle D_x\rangle^{m +\frac13  }\langle\frac{1}{D_x}\rangle^\epsilon \nabla n \big\|_{L^2}   \big\|e^{a A^{-\frac{1}{3}}\left|D_x\right|^{\frac{2}{3} } t }\langle D_x\rangle^{m+\frac13}\langle\frac{1}{D_x}\rangle^\epsilon n\big\|_{L^2}  \big\|e^{a A^{-\frac{1}{3}}\left|D_x\right|^{\frac{2}{3} } t }\langle D_x\rangle^m\langle\frac{1}{D_x}\rangle^\epsilon n\big\|_{L^2}    \\
    			&\quad \times  \big\|  e^{a A^{-\frac{1}{3}}|D_x|^{\frac{2}{3} } t } \langle D_x\rangle^m\langle\frac{1}{D_x}\rangle^\epsilon |D_x|^\frac13  n    \big\|_{L^2 }^\frac12 \big\|  e^{a A^{-\frac{1}{3}}|D_x|^{\frac{2}{3} } t } \langle D_x\rangle^m\langle\frac{1}{D_x}\rangle^\epsilon  \py n    \big\|_{L^2 }^\frac12, \alabel{case2}
    	\end{align*}
    	where we used $\epsilon>0$ and $m>0$.
    	
    	$\bullet$~Case 3:   $2|k|<|k-l|$.
    	Using Lemma \ref{lem:a preliminary step} ($\gamma=\frac13$),
        $$
        \langle k \rangle^{2m +\frac23} \leq  \langle l \rangle^{m+\frac13} \langle k-l \rangle^{m+\frac13},
        $$
        and
        \begin{align*} 
    			1 \leq \langle \frac{1}{k-l} \rangle^\frac12 |k-l|^{\frac12} \lesssim \langle\frac{1}{l}\rangle^\epsilon\langle\frac{1}{k-l}\rangle^\epsilon\left(1+\langle\frac{1}{k}\rangle^{\frac{1}{2}-2 \epsilon} \right)|l|^{\frac12},
    	\end{align*}
    	when $0<\epsilon<\frac12<m$, we obtain
    	\begin{align*}  
    			&\quad\int_{2|k|<|k-l|} e^{2 a A^{-\frac{1}{3}}|k|^{\frac{2}{3} } t }\langle k\rangle^{2 m +\frac23 } \langle\frac{1}{k}\rangle^{2 \epsilon}   \left|   \int_{\mathbb{R}} \mathcal{M}\left(k, D_y\right) \nabla_k n_k(y) \cdot\nabla_l c_l(y)   n_{k-l-\eta}(y) n_{\eta}(y) d y\right| d k d l d\eta\\
    			& \lesssim \big\|e^{a A^{-\frac{1}{3}}|k|^{\frac{2}{3} } t }\langle k\rangle^{m }\langle\frac{1}{k}\rangle^\epsilon   \|\nabla_k  n_k\|_{L_y^2}\big\|_{L_k^2}  \big\|e^{a A^{-\frac{1}{3}}|l|^{\frac{2}{3} } t }\langle l\rangle^{m +\frac13  }\langle\frac{1}{l}\rangle^\epsilon  |l|^{\frac12}  \|\nabla_l  c_l\|_{L_y^\infty}\big\|_{L_l^2} \\
    			& \quad \times   \big\|e^{a A^{-\frac{1}{3}}|k-l|^{\frac{2}{3} } t }\langle k-l\rangle^{m  +\frac13 } \langle\frac{1}{k-l}\rangle^\epsilon    \int_{\mathbb{R}}\| n_{k-l-\eta}\|_{L_y^2} \|n_{\eta}\|_{L_y^\infty} d\eta \big\|_{L_{k-l}^2} 
    			\big\|  \left(\langle\frac{1}{k}\rangle^\epsilon  +\langle\frac{1}{k}\rangle^{\frac12 -\epsilon}  \right)\langle k\rangle^{-m}\big\|_{L_k^2}\\
    			&\lesssim \big\|e^{a A^{-\frac{1}{3}}\left|D_x\right|^{\frac{2}{3} } t }\langle D_x\rangle^{m  }\langle\frac{1}{D_x}\rangle^\epsilon \nabla n \big\|_{L^2}   \big\|e^{a A^{-\frac{1}{3}}\left|D_x\right|^{\frac{2}{3} } t }\langle D_x\rangle^{m+\frac13}\langle\frac{1}{D_x}\rangle^\epsilon n\big\|_{L^2}^2     \\
    			&\quad \times  \big\|  e^{a A^{-\frac{1}{3}}|D_x|^{\frac{2}{3} } t } \langle D_x\rangle^{m+\frac13}\langle\frac{1}{D_x}\rangle^\epsilon |D_x|^\frac13  n    \big\|_{L^2 }^\frac12 \big\|  e^{a A^{-\frac{1}{3}}|D_x|^{\frac{2}{3} } t } \langle D_x\rangle^{m+\frac13}\langle\frac{1}{D_x}\rangle^\epsilon  \py n    \big\|_{L^2 }^\frac12. \alabel{case3}
    	\end{align*}
    	
    	Collecting the estimates \eqref{case1}--\eqref{case3} and using \eqref{eq:Fourier}, we arrive at
    	\begin{equation*}\begin{aligned}
    			&\quad\left|\Re\left(\nabla \cdot(\kappa n^2\nabla c) \left\lvert\, \mathcal{M} e^{2 a A^{-\frac{1}{3}}\left|D_x\right|^{\frac{2}{3} }t }\langle D_x\rangle^{2 m +\frac23 }\langle\frac{1}{D_x}\rangle^{2 \epsilon} n\right.\right)\right| \\
    			&\lesssim |\kappa| \big\|e^{a A^{-\frac{1}{3}}\left|D_x\right|^{\frac{2}{3} } t }\langle D_x\rangle^{m+\frac13} \langle\frac{1}{D_x}\rangle^\epsilon \nabla n \big\|_{L^2}   \big\|e^{a A^{-\frac{1}{3}}\left|D_x\right|^{\frac{2}{3} } t }\langle D_x\rangle^{m+\frac13}\langle\frac{1}{D_x}\rangle^\epsilon n\big\|_{L^2}^2     \\
    			&\quad \times  \big\|  e^{a A^{-\frac{1}{3}}|D_x|^{\frac{2}{3} } t } \langle D_x\rangle^{m+\frac13}\langle\frac{1}{D_x}\rangle^\epsilon |D_x|^\frac13  n    \big\|_{L^2 }^\frac12 \big\|  e^{a A^{-\frac{1}{3}}|D_x|^{\frac{2}{3} } t } \langle D_x\rangle^{m+\frac13}\langle\frac{1}{D_x}\rangle^\epsilon  \py n    \big\|_{L^2 }^\frac12, 
    	\end{aligned}\end{equation*}
    	which implies 
    	\begin{align*} 
    			&\quad \frac1A \int_{0}^{t}  \left|\Re\left( \nabla\cdot(\kappa n^2\nabla c) \left\lvert\, \mathcal{M} e^{2 a A^{-\frac{1}{3}}\left|D_x\right|^{\frac{2}{3} }t }\langle D_x\rangle^{2 m   }\langle\frac{1}{D_x}\rangle^{2 \epsilon} n\right.\right)\right| dt \\
    			& \quad +  \frac1A \int_{0}^{t} \left|\Re\left( |D_x|^{\frac13} ( \nabla\cdot(\kappa n^2\nabla c) ) \left\lvert\, \mathcal{M} e^{2 a A^{-\frac{1}{3}}\left|D_x\right|^{\frac{2}{3} }t }\langle D_x\rangle^{2 m  }\langle\frac{1}{D_x}\rangle^{2 \epsilon} |D_x|^{\frac13} n\right.\right)\right|dt \\ 
    			&\lesssim \frac{|\kappa| }{A^\frac16} \|  \langle D_x\rangle^{m+\frac13} \langle \frac{1}{D_x}\rangle^\epsilon   n \|_{X_a}^4. \alabel{eq:nabla n^2 nabla c}
    	\end{align*}
    	Combining \eqref{eq:temp001}--\eqref{eq:temp004} and \eqref{eq:nabla n^2 nabla c} with \eqref{eq:temp0.12}, we complete the proof.
    \end{proof}

    To improve the hypotheses \eqref{eq:bootstap}, it remains to complete the $L^\infty L^\infty$ estimate of $n$, which will be achieved by the Moser--Alikakos iteration.
    
    \begin{Lem} \label{lem:n L infty L infty}
    	Suppose that $0\leq t \leq T$, $0<a<\frac{1}{16(1+2 \pi)}$ and $0<\epsilon< 1/2<m$. Then it holds that\\
        {\bf Case of $\alpha>0$:} 
    	\begin{align*}
           &\quad\| n \|_{L^\infty L^\infty} \\ 
           &\leq C\lt(|\kappa|^3\|{n}\|_{L^{\infty}L^{2}}^3+\mu^4 \|\nabla c\|_{L^{\infty}L^{4}}^4  + \kappa^2 + 1\rt)\\
           &\qquad\times \lt( \|n_{\mathrm{in}}\|_{L^\infty}^2 +  \|n \|_{L^\infty L^2}^2 + \frac{1}{A^\frac13} \lt( |\kappa| \|n\|_{L^\infty L^\infty}^2 + |\mu| \|n\|_{L^\infty L^\infty} \rt)  \| \langle D_x \rangle^m \langle \frac{1}{D_x} \rangle^\epsilon n \|_{X_a}^2  +1\rt);
       \end{align*}
       {\bf Case of $\alpha=0$:}
    	\begin{align*}
           &\quad \| n \|_{L^\infty L^\infty} \\
           &\leq C\lt(|\kappa|^3\|{n}\|_{L^{\infty}L^{2}}^3+\mu^4 \|\nabla c\|_{L^{\infty}L^{4}}^4  + \kappa^2 + 1\rt)\\
           &\qquad\times \lt( \|n_{\mathrm{in}}\|_{L^\infty}^2 +  \|n \|_{L^\infty L^2}^2 + \frac{1}{A^\frac16} \lt( {|\kappa|}^\frac12 \|n\|_{L^\infty L^\infty} + {|\mu|}^\frac12 \|n\|_{L^\infty L^\infty}^\frac12 \rt)  \| \langle D_x \rangle^m \langle \frac{1}{D_x} \rangle^\epsilon n \|_{X_a}^2  +1\rt).
       \end{align*}
    \end{Lem}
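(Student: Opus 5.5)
I will run a Moser--Alikakos iteration on the $n$-equation in \eqref{eq:main}. Since the iteration is not tied to the Couette structure, I will work in the unscaled form \eqref{eq:main1}, where transport by $Ay\partial_x+u\cdot\nabla$ is divergence free and disappears from $L^p$ estimates. For $p\ge 2$, testing the $n$-equation with $p n^{p-1}$ (recall $n\ge0$) gives
\begin{equation*}
\frac{d}{dt}\|n\|_{L^p}^p+\frac{4(p-1)}{p}\|\nabla n^{p/2}\|_{L^2}^2
= p(p-1)\int n^{p-1}(\kappa n+\mu)\nabla c\cdot\nabla n .
\end{equation*}
I will treat the two pieces separately.

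For the $\mu$-piece I will not integrate by parts. I bound it by $C p^2\mu\int n^{p/2}|\nabla c|\,|\nabla n^{p/2}|$, then use H\"older with $\|\nabla c\|_{L^4}$ and $\|n^{p/2}\|_{L^4}$, followed by Gagliardo--Nirenberg in $\mathbb{R}^2$ and Young. This absorbs the gradient into the dissipation and leaves a term of the form $C p^{C}\mu^4\|\nabla c\|_{L^\infty L^4}^4\|n^{p/2}\|_{L^2}^2$, which is the origin of the $\mu^4\|\nabla c\|^4_{L^4}$ factor in the statement.

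For the $\kappa$-piece I will integrate by parts, so that it equals $\frac{\kappa(p-1)p}{p+1}\int n^{p+1}(n-\alpha c)$, using $-\Delta c=n-\alpha c$ and the sign of $c$. The dangerous part is $\kappa\int n^{p+2}$. I plan to write it as $\int (n^{p/2})^2 n^2$ and bound it by H\"older with $\|n\|_{L^\infty}$ to a small power and $\|n\|_{L^2}$, then apply Gagliardo--Nirenberg and Young, giving
\[
\le \tfrac{1}{2}\|\nabla n^{p/2}\|^2+Cp^{C}\bigl(|\kappa|^3\|n\|_{L^\infty L^2}^3+\kappa^2\bigr)\|n^{p/2}\|_{L^2}^2 .
\]
This is where the first factor in the statement comes from.

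This is the step I expect to be the main obstacle. Bounding $n^2$ purely by $L^2$ quantities is supercritical, and the extra input has to come from the Couette structure. The way I intend to handle it is to not bound all of $\kappa\int n^{p+2}$ (and, for $\mu$, the analogous term) directly. Instead, I will use the bootstrap quantity $\|\langle D_x\rangle^m\langle D_x^{-1}\rangle^\epsilon n\|_{X_a}^2$, whose time integrals of $\frac{1}{A^{1/3}}\||D_x|^{1/3}n\|^2$ and $\frac1A\|\nabla n\|^2$ control $\int_0^t\|n\|_{L^\infty}^2\,ds$ up to a factor $A^{-1/3}$ (resp.\ $A^{-1/6}$ when $\alpha=0$, since $\nabla c$ is controlled less well). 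Time integrating, this gives the correction term
\[
A^{-1/3}\bigl(|\kappa|\|n\|_{L^\infty L^\infty}^2+|\mu|\|n\|_{L^\infty L^\infty}\bigr)\|\cdot\|_{X_a}^2 .
\]

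I then close the iteration. A standard Nash-type step gives an ODE inequality of the form $\frac{d}{dt}y_p+y_p\le Cp^{C}\Lambda\,(\sup y_{p/2})^2+\text{forcing}$, with $\Lambda$ the first bracket. Taking $p=2^j$ and iterating as in Alikakos, I obtain
\[
\|n\|_{L^\infty L^\infty}\le C\Lambda\,\sup_j\bigl(\|n_{\rm in}\|_{L^\infty}^2,\ \|n\|_{L^\infty L^2}^2,\ \text{forcing}\bigr)+\text{const},
\]
which is the claimed form in each case. The iteration product $\prod_j (Cp_j^C\Lambda)^{2^{-j}}$ converges to a constant multiple of a power of $\Lambda$, and I will normalize this into the stated bound.
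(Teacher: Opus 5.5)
Your treatment of the $\mu$-piece matches the paper's. However, there is a genuine gap exactly at the step you flag, and neither version of your $\kappa$-estimate closes.

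\textbf{The displayed $\kappa$-bound is false.} The bound $\kappa\int n^{p+2}\le\frac12\|\nabla n^{p/2}\|_{L^2}^2+Cp^C(|\kappa|^3\|n\|_{L^\infty L^2}^3+\kappa^2)\|n^{p/2}\|_{L^2}^2$ cannot hold. Under the $L^2$-invariant rescaling $n\mapsto\lambda n(\lambda\,\cdot)$, both $\int n^{p+2}$ and $\|\nabla n^{p/2}\|_{L^2}^2$ scale like $\lambda^{p}$, while $\|n^{p/2}\|_{L^2}^2$ scales like $\lambda^{p-2}$. Letting $\lambda\to\infty$ would force $\kappa\int n^{p+2}\le\frac12\|\nabla n^{p/2}\|_{L^2}^2$ for every profile, which fails at large amplitude. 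Since $\|n\|_{L^2}$ is critical here, no coefficient depending on it alone can make a bound linear in $\|n^{p/2}\|_{L^2}^2$ work. Inserting $\|n\|_{L^\infty}^\theta$ would put $\|n\|_{L^\infty}$ into $\Lambda$, which the statement does not allow.

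\textbf{The fallback does not repair this.} In rescaled time the $\kappa$-term at level $p$ is bounded by $\frac{p|\kappa|}{A}\|n\|_{L^\infty}^2\|n\|_{L^p}^p$. Your bound $\frac1A\int_0^t\|n\|_{L^\infty}^2ds\lesssim A^{-\frac13}\|\langle D_x\rangle^m\langle\frac1{D_x}\rangle^\epsilon n\|_{X_a}^2$ is correct. After time integration, though, it only gives $p|\kappa|A^{-\frac13}\|\langle D_x\rangle^m\langle\frac1{D_x}\rangle^\epsilon n\|_{X_a}^2\sup_s\|n\|_{L^p}^p$, which can be absorbed only if $A\gtrsim p^3$. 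Suppose instead you pull out powers of $\|n\|_{L^\infty L^\infty}$ to get an additive forcing like the correction in the statement. Homogeneity then forces a power of $\|n\|_{L^\infty L^\infty}$ comparable to $p$, so after $p$-th roots the gain $A^{-1/3}$ disappears and the bound is circular. A $p$-independent forcing simply tends to $1$ under $p$-th roots. So the correction term cannot come from an every-level argument. Moreover, the recursion $y_p\lesssim p^C\Lambda(\sup y_{p/2})^2$ you write gives a bound linear, not quadratic, in the base norm, so your final normalization step is also unjustified.

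\textbf{What the paper does instead.} It uses the Couette structure exactly once, at the base level $L^4$, and handles the supercritical term at all higher levels by superlinear absorption.

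In Step I, one tests with $n^3$ and uses
$\|n^2(\kappa n+\mu)\nabla c\|_{L^2}^2\le(\kappa^2\|n\|_{L^\infty}^4+\mu^2\|n\|_{L^\infty}^2)\|n\|_{L_x^\infty L_y^2}^2\|\nabla c\|_{L_x^2L_y^\infty}^2$,
together with $\|n\|_{L_x^\infty L_y^2}\lesssim\|\langle D_x\rangle^m\langle\frac1{D_x}\rangle^\epsilon|D_x|^{\frac13}n\|_{L^2}$ and $\|\nabla c\|_{L_x^2L_y^\infty}\lesssim\|n\|_{L^2}$. For $\alpha=0$ one instead integrates by parts to $\int n^6$, $\int n^5$ and interpolates with $|D_x|^{\frac13}n$ and $\nabla n$, which only yields $A^{-1/6}$. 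This produces exactly the stated correction, but as a bound on $\|n\|_{L^\infty L^4}^2$.

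In Step II, for $p=2^j$, there is no Couette input. Gagliardo--Nirenberg gives $\int n^{2p+2}\lesssim\|n^p\|_{L^2}^2\|\nabla n^p\|_{L^2}^{2/p}$, so after Young the source is superlinear: $|\kappa|^{\frac{p}{p-1}}\|n^p\|_{L^2}^{\frac{2p}{p-1}}$, plus an analogue weighted by $\|n\|_{L^2}$. Nash's inequality turns the dissipation into $\|n^p\|_{L^2}^4/\|n^p\|_{L^1}^2$, which wins because $\frac{2p}{p-1}<4$. This gives $\sup_s\int n^{2p}\le Bp^8(\sup_s\int n^p)^{\frac{2p-2}{p-2}}$, with exponent $\frac{2p-2}{p-2}$ rather than $2$. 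Iterating down to $L^4$ yields $\|n\|_{L^\infty L^\infty}\lesssim B(\|n\|_{L^\infty L^4}^2+\|n_{\rm in}\|_{L^\infty}+1)$. This is where both the squares and the factor $|\kappa|^3\|n\|_{L^\infty L^2}^3+\kappa^2$ come from.

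Your time-integrated $L^\infty$ control can be salvaged differently. A single integrating factor $\exp(-\frac{|\kappa|}{A}\int_0^t\|n\|_{L^\infty}^2ds)$, applied to $n$ at all levels simultaneously, absorbs the $\kappa$-term. This uses $c\ge0$ for $\kappa>0$ and $\alpha c\le\|n\|_{L^\infty}$ for $\kappa<0$. What remains is a pure $\mu$-type Alikakos iteration. That route proves a differently shaped estimate, not the one stated.
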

    \begin{proof}
    	\underline{\bf Step I: $L^\infty L^4$ estimate of n.} 
        
        $\bullet$~Case of $\alpha>0$.
    	Multiplying $\eqref{eq:main}_1$ by $4 n^{3}$, integrating the resulting equation over $\mathbb{R}^2 $, and using $ \nabla\cdot u=0 $, one has
    	\begin{align*}
    		\frac{d}{d t}\|n^2\|_{L^2}^2+\frac{3}{ A}\|\nabla n^2\|_{L^2}^2 
    		&	=  \frac{6}{A} \int_{  \mathbb{R}^2 } n^2(\kappa n+\mu) \nabla c \cdot \nabla n^2 d x d y 
    		\leq  \frac{6}{A}\|n^2(\kappa n+\mu) \nabla c\|_{L^2}\|\nabla n^2\|_{L^2} \\
    		&\leq  \frac{3}{2 A}\|\nabla n^2\|_{L^2}^2+\frac{6}{A}\|n^2(\kappa n+\mu) \nabla c\|_{L^2}^2 \\
    		& \leq \frac{3}{2 A}\|\nabla n^2\|_{L^2}^2  + \frac{C}{A} \lt( \kappa^2 \|n\|_{L^\infty}^4 + \mu^2 \|n\|_{L^\infty}^2\rt)\| n\|_{L_x^\infty L_y^2}^2 \| \nabla c\|_{L_x^2 L_y^\infty}^2. \alabel{eq:L^infty L^4'}
    	\end{align*}
        Thanks to the Fourier inversion formula, we get
        \begin{align*}
            \| n (\cdot,y) \|_{L_x^\infty} & \leq   \| n_k(y) \|_{L_k^1} \leq \| \langle k \rangle^m \langle \frac{1}{k} \rangle^\epsilon |k|^\frac13 n_k(y) \|_{L_k^2} \| |k|^{-\frac13} \langle k \rangle^{-m} \langle \frac{1}{k} \rangle^{-\epsilon}   \|_{L_k^2}\\
            &\lesssim \| \langle D_x \rangle^m \langle \frac{1}{D_x} \rangle^\epsilon |D_x|^\frac13 n \|_{L_x^2},
        \end{align*}
        which implies 
        \begin{align*}
            \| n\|_{L_x^\infty L_y^2} \lesssim \| \langle D_x \rangle^m \langle \frac{1}{D_x} \rangle^\epsilon |D_x|^\frac13 n \|_{L^2}.
        \end{align*}
        Using Lemma \ref{lem:Elliptic estimate}, we infer that
        \begin{align*}
            \| \nabla c\|_{L_x^2 L_y^\infty} & =  \big\| \|\nabla c\|_{L_y^\infty} \big\|_{L_x^2 } \leq \big\| \|\nabla c\|_{L_y^2}^\frac12   \|\py \nabla c\|_{L_y^2}^\frac12 \big\|_{L_x^2 } \\
            &\leq \|\nabla c\|_{L^2}^\frac12 \|\py \nabla c\|_{L^2}^\frac12 \lesssim \|n\|_{L^2}.
        \end{align*}
        Hence, integrating \eqref{eq:L^infty L^4'} over $(0, t)$, we have
    	\begin{align*}
    		\|n\|_{L^\infty L^4}^4 &\leq \|n_{\mathrm{in}}\|_{L^4}^4 + \frac{C}{A}\lt( \kappa^2 \|n\|_{L^\infty L^\infty}^4 + \mu^2 \|n\|_{L^\infty L^\infty}^2 \rt) \| \langle D_x \rangle^m \langle \frac{1}{D_x} \rangle^\epsilon |D_x|^\frac13 n \|_{L^2L^2}^2 \| n \|_{L^\infty L^2}^2\\
            &\leq \frac12\|n_{\mathrm{in}}\|_{L^\infty}^4 + \frac12 \|n \|_{L^\infty L^2}^4 + \frac{C}{A^\frac23} \lt( \kappa^2 \|n\|_{L^\infty L^\infty}^4 + \mu^2 \|n\|_{L^\infty L^\infty}^2 \rt)   \| \langle D_x \rangle^m \langle \frac{1}{D_x} \rangle^\epsilon n \|_{X_a}^4. \alabel{eq:L^infty L^4}
    	\end{align*}

        $\bullet$~Case of $\alpha = 0$. One rewrites \eqref{eq:L^infty L^4'} as
        \begin{align*}
    		\frac{d}{d t}\|n^2\|_{L^2}^2+\frac{3}{ A}\|\nabla n^2\|_{L^2}^2 
    		&	=  \frac{6\kappa}{A} \int_{  \mathbb{R}^2 } n^3 \nabla c \cdot \nabla n^2 d x d y + \frac{6\mu}{A} \int_{  \mathbb{R}^2 } n^2 \nabla c \cdot \nabla n^2 d x d y\\ 
    		& =  -\frac{12\kappa}{5A} \int_{  \mathbb{R}^2 } n^5 \Delta c  d x d y -\frac{3\mu}{A} \int_{  \mathbb{R}^2 } n^4 \Delta c  d x d y   \\
            & =  \frac{12\kappa}{5A} \int_{  \mathbb{R}^2 } n^6  d x d y  + \frac{3\mu}{A} \int_{  \mathbb{R}^2 } n^5 d x d y   \\
    		& \leq \frac{C}{A} \lt( |\kappa| \|n\|_{L^\infty}^2 + |\mu| \|n\|_{L^\infty}\rt)\| n\|_{L_x^\infty L_y^2}^2 \| n \|_{L_x^2 L_y^\infty}^2. \alabel{eq:L^infty L^4''}
    	\end{align*}
        On one hand, we get
        \begin{align*}
            \| n (\cdot,y) \|_{L_x^\infty} &\leq \| n_k(y) \|_{L_k^1} \leq \| \langle k \rangle^m \langle \frac{1}{k} \rangle^\epsilon |k|^\frac16 n_k(y) \|_{L_k^2} \| |k|^{-\frac16} \langle k \rangle^{-m} \langle \frac{1}{k} \rangle^{-\epsilon}   \|_{L_k^2}\\
            &\lesssim \| \langle D_x \rangle^m \langle \frac{1}{D_x} \rangle^\epsilon n \|_{L_x^2}^\frac12 \| \langle D_x \rangle^m \langle \frac{1}{D_x} \rangle^\epsilon |D_x|^\frac13 n \|_{L_x^2}^\frac12,
        \end{align*}
        which implies 
        \begin{align*}
            \| n\|_{L_x^\infty L_y^2} \lesssim \| \langle D_x \rangle^m \langle \frac{1}{D_x} \rangle^\epsilon   n \|_{L^2}^\frac12 \| \langle D_x \rangle^m \langle \frac{1}{D_x} \rangle^\epsilon |D_x|^\frac13 n \|_{L^2}^\frac12.
        \end{align*}
        On the other hand, we infer that
        \begin{align*}
            \| n \|_{L_x^2 L_y^\infty} & =  \big\| \|n \|_{L_y^\infty} \big\|_{L_x^2 } \leq \big\| \|n\|_{L_y^2}^\frac12   \|\py n\|_{L_y^2}^\frac12 \big\|_{L_x^2 } \\
            &\leq \|n\|_{L^2}^\frac12 \|\py n\|_{L^2}^\frac12 \lesssim \| D_x \rangle^m \langle \frac{1}{D_x} \rangle^\epsilon n\|_{L^2}^\frac12 \| D_x \rangle^m \langle \frac{1}{D_x} \rangle^\epsilon \nabla n\|_{L^2}^\frac12.
        \end{align*}
        Hence, integrating \eqref{eq:L^infty L^4''} over $(0, t)$, we have
    	\begin{align*}
    		\|n\|_{L^\infty L^4}^4 &\leq \|n_{\mathrm{in}}\|_{L^4}^4 + \frac{C}{A}\lt( |\kappa| \|n\|_{L^\infty L^\infty}^2 + |\mu| \|n\|_{L^\infty L^\infty} \rt) \\
            &\qqquad \qqquad\times \| \langle D_x \rangle^m \langle \frac{1}{D_x} \rangle^\epsilon |D_x|^\frac13 n \|_{L^2L^2} \| \langle D_x \rangle^m \langle \frac{1}{D_x} \rangle^\epsilon \nabla n \|_{L^2L^2} \| \langle D_x \rangle^m \langle \frac{1}{D_x} \rangle^\epsilon n \|_{L^\infty L^2}^2\\
            &\leq \frac12\|n_{\mathrm{in}}\|_{L^\infty}^4 + \frac12 \|n \|_{L^\infty L^2}^4 + \frac{C}{A^\frac13} \lt( |\kappa| \|n\|_{L^\infty L^\infty}^2 + |\mu| \|n\|_{L^\infty L^\infty} \rt)   \| \langle D_x \rangle^m \langle \frac{1}{D_x} \rangle^\epsilon n \|_{X_a}^4. \alabel{eq:L^infty L^4 2}
    	\end{align*}

    \underline{\bf Step II: $L^\infty L^p$ estimate of n.} 
    For $ p=2^{j} $ with $ j=3,4...$, multiplying $\eqref{eq:main}_1$ by $2pn^{2p-1}$, integrating by parts over $\mathbb{R}^2$, and using Lemma \ref{lem:Elliptic estimate}, one deduces
		\begin{equation}\label{eq:n^p}
			\begin{aligned}
				&\dfrac{d}{dt}\|n^p\|_{L^2}^2+\dfrac{2(2p-1)}{pA}\|\nabla n^p\|_{L^2}^2\\
				=&\frac{2\mu(2p-1)}{A}\int_{ \mathbb{R}^2}n^p\nabla c\cdot\nabla n^pdxdy  +  \frac{2\kappa p(2p-1)}{(2p+1)A}\int_{ \mathbb{R}^2}\nabla c\cdot\nabla n^{2p+1}dxdy\\
				:=&I_{nl1}+I_{nl2}.
			\end{aligned}
		\end{equation}
		For $I_{nl1}$, it holds
        \begin{equation}\label{eq:Inl1}
			\begin{aligned}
				I_{nl1}&\leq\dfrac{2|\mu|(2p-1)}{A}\|\nabla n^p\|_{L^2}\|n^p\nabla c\|_{L^2}\\
				&\leq\dfrac{2p-1}{4pA}\|\nabla n^p\|_{L^2}^2+\dfrac{2\mu^2 p(2p-1)}{A}\|n^p\nabla c\|_{L^2}^2\\
				&\leq\dfrac{2p-1}{4pA}\|\nabla n^p\|_{L^2}^2+\dfrac{C\mu^2 p(2p-1)}{A}\|n^p\|_{L^2}\|\nabla n^{p}\|_{L^{2}}\|\nabla c\|_{L^4}^2\\
				&\leq\dfrac{2p-1}{2pA}\|\nabla n^p\|_{L^2}^2+\dfrac{C \mu^4 p^3(2p-1)}{A}\|n^p\|_{L^2}^2\|\nabla c\|_{L^4}^4.
			\end{aligned}
		\end{equation}
        For $I_{nl2}$, it follows from $\eqref{eq:main}_2$ that
		\begin{equation}\label{eq:Inl2}
			\begin{aligned}
				I_{nl2}&=-\frac{2\kappa p(2p-1)}{(2p+1)A}\int_{\mathbb{R}^2}n^{2p+1}\Delta cdxdy\\ 
                &=\frac{2\kappa p(2p-1)}{(2p+1)A}\int_{ \mathbb{R}^2}n^{2p+2}dxdy-\frac{2\kappa p(2p-1)\alpha}{(2p+1)A}\int_{ \mathbb{R}^2}n^{2p+1}cdxdy\\
				&\leq C\frac{2|\kappa| p(2p-1)}{(2p+1)A}\left(\|n^p\|_{L^2}^2\|\nabla n^p\|_{L^2}^{\frac2p}+\|n\|_{L^2}\| n^p\|_{L^2}\|\nabla n^p\|_{L^2}^{\frac{p+1}{p}}\right)\\
                &\leq\frac{2p-1}{2pA}\|\nabla n^{p}\|_{L^{2}}^{2}+\frac{C |\kappa|^{\frac{p}{p-1}}(2p-1)(p-1)}{pA}\left(\frac{p}{2p+1} \right)^{\frac{p}{p-1}}\|n^{p}\|_{L^{2}}^{\frac{2p}{p-1}}\\
                &\quad +\frac{C|\kappa|^{\frac{2p}{p-1}}(p-1)(2p-1)}{p(p+1)A}\left[\frac{p(p+1)}{2p+1} \right]^{\frac{2p}{p-1}}\|n\|_{L^{2}}^{\frac{2p}{p-1}}\|n^{p}\|_{L^{2}}^{\frac{2p}{p-1}}.
			\end{aligned}
		\end{equation}

		Adding \eqref{eq:n^p}--\eqref{eq:Inl2} together, we obtain
		\begin{equation*} 
			\begin{aligned}
				\frac{d}{dt}\|n^{p}\|_{L^{2}}^{2}\leq&-\frac{1}{A}\|\nabla n^{p}\|_{L^{2}}^{2}+\frac{C|\kappa|^{\frac{p}{p-1}} p}{A}\|n^{p}\|_{L^{2}}^{\frac{2p}{p-1}}+\frac{C|\kappa|^{\frac{2p}{p-1}}p^{3}}{A}\|n\|_{L^{2}}^{\frac{2p}{p-1}}\|n^{p}\|_{L^{2}}^{\frac{2p}{p-1}}\\
				&+\dfrac{C\mu^4 p^4}{A}\|n^p\|_{L^2}^2\|\nabla c\|_{ L^4}^4,
			\end{aligned}
		\end{equation*}
		which, along with the Nash inequality
		$$
		-\|\nabla n^p\|_{L^2}^{2}\leq-C\dfrac{\|n^p\|_{L^2}^4}{\|n^p\|_{L^1}^{2}},
		$$
		yields that
			\begin{align*}
				\frac{d}{dt}\|n^{p}\|_{L^{2}}^{2}&\leq -\frac{C}{A}\frac{\|n^{p}\|_{L^{2}}^{4}}{\|n^{p}\|_{L^{1}}^{2}}+\frac{C|\kappa|^{\frac{p}{p-1}}p}{A}\|n^{p}\|_{L^{2}}^{\frac{2p}{p-1}}+\frac{C|\kappa|^{\frac{2p}{p-1}}p^{3}}{A}\|n\|_{L^{2}}^{\frac{2p}{p-1}}\|n^{p}\|_{L^{2}}^{\frac{2p}{p-1}}
				+\frac{C\mu^4 p^4}{A}\|n^p\|_{L^2}^2\|\nabla c\|_{ L^4}^4\\
				&\leq-\frac{C\|n^p\|_{L^2}^2}{A\|n^{p}\|_{L^{1}}^{2}}\big(\|n^{p}\|_{L^{2}}^2-|\kappa|^{\frac{p}{p-1}}p\|n^{p}\|_{L^{1}}^{2}\|n^p\|_{L^2}^{\frac{2}{p-1}}   -     |\kappa|^{\frac{2p}{p-1}} p^{3}  \|n^p\|_{L^1}^2  \|n\|_{L^{2}}^{\frac{2p}{p-1}}\|n^p\|_{L^2}^{\frac{2}{p-1}}\\
				&\qqquad \qqquad\quad  - \mu^4 p^4\|n^p\|_{L^1}^2\|\nabla c\|_{L^4}^4\big)\\
				&\leq-\frac{C\|n^p\|_{L^2}^2}{A\|n^{p}\|_{L^{1}}^{2}}\big(\|n^p\|_{L^2}^2  - |\kappa|^{\frac{p}{p-2}} p^{\frac{p-1}{p-2}}\|n^p\|_{L^1}^{\frac{2(p-1)}{p-2}} -  |\kappa|^{\frac{2p}{p-2}} p^{\frac{3(p-1)}{p-2}}\|n^p\|_{L^1}^{\frac{2(p-1)}{p-2}}  \|n\|_{L^2}^{\frac{2p}{p-2}}\\
				&\qqquad \qqquad\quad-\mu^4 p^4\|n^p\|_{L^1}^2\|\nabla c\|_{ L^4}^4\big).
			\end{align*}
		Hence, proceeding by contradiction, we conclude that
		\begin{align*}
				&\quad \sup_{s\in [0, t]} \|n^p\|_{L^2}^2 \\
                &\leq C\bigg(\|n^{p}_{\mathrm{in}}\|_{L^{2}}^{2}  +  |\kappa|^{\frac{p}{p-2}}  p^{\frac{p-1}{p-2}}\|n^{p}\|_{L^{\infty}L^{1}}^{\frac{2(p-1)}{p-2}} \\
                &\qqquad+  |\kappa|^{\frac{2p}{p-2}} p^{\frac{3(p-1)}{p-2}}\|n\|_{L^{\infty}L^{2}}^{\frac{2p}{p-2}}\|n^{p}\|_{L^{\infty}L^{1}}^{\frac{2(p-1)}{p-2}}+ \mu^4 p^4\|n^p\|_{L^{\infty}L^1}^2\|\nabla c\|_{L^\infty L^4}^4 + 1\bigg)\\
			&= C\bigg(\|n_{\rm in}\|_{L^{2p}}^{2p}  +  |\kappa|^{\frac{p}{p-2}}  p^{\frac{p-1}{p-2}}\|n\|_{L^{\infty}L^{p}}^{\frac{2p(p-1)}{p-2}} \\
            &\qqquad +  |\kappa|^{\frac{2p}{p-2}}  p^{\frac{3(p-1)}{p-2}}\|{n}\|_{L^{\infty}L^{2}} ^{\frac{2p}{p-2}}\|n\|_{L^{\infty}L^{p}}^{\frac{2p(p-1)}{p-2}}+  \mu^4 p^{4}\|\nabla c\|_{L^{\infty}L^{4}}^{4}\|n\|_{L^{\infty}L^{p}}^{2p} + 1\bigg)\\
				&\leq C \lt(|\kappa|^3\|{n}\|_{L^{\infty}L^{2}}^3+\mu^4 \|\nabla c\|_{L^{\infty}L^{4}}^4  + \kappa^2 + 1\rt)\\
                &\qquad\times \left(\Vert{n_{\mathrm{in}}}\Vert_{L^{\infty}}^{2p-2}+p^{4}\|n\|_{L^{\infty}L^{p}}^{\frac{2p(p-1)}{p-2}}+\frac{p-2}{p-1}p^{\frac{4(p-1)}{p-2}}\|n\|_{L^{\infty}L^{p}}^{\frac{2p(p-1)}{p-2}} + 1\right)\\
				&\leq C\lt(|\kappa|^3\|{n}\|_{L^{\infty}L^{2}}^3+\mu^4 \|\nabla c\|_{L^{\infty}L^{4}}^4  + \kappa^2 + 1\rt)\max\left\{\left(1+\Vert{n_{\mathrm{in}}}\Vert_{L^{\infty}}\right)^{2p-2},p^{8}\|n\|_{L^{\infty}L^{p}}^{\frac{2p(p-1)}{p-2}} \right\}, \alabel{eq:claim}
			\end{align*}
        where we used that $\frac{p}{p-2} \leq 2$ and $\frac{2p}{p-2}\leq3$.

    \underline{\bf Step III: $L^\infty L^\infty$ estimate of n.} Denote $B:=C\lt(|\kappa|^3\|{n}\|_{L^{\infty}L^{2}}^3+\mu^4 \|\nabla c\|_{L^{\infty}L^{4}}^4  + \kappa^2 + 1\rt)$ and \\$D:=1+\Vert{n_{\mathrm{in}}}\Vert_{L^{\infty}}$. 
    Recalling $ p=2^{j}$ with $j\geq 2$, by iteration we infer from \eqref{eq:claim} that
		\begin{align*}
			 &\quad\sup_{s\in [0, t]}\int_{ \mathbb{R}^2 }|n|^{2^{j+1}}dxdy \\
			&\leq B \cdot 2^{8j}\left(\sup_{s\in [0, t]}\int_{ \mathbb{R}^2  }|n|^{2^{j}}dxdy \right)^{\frac{2^{j+1}-2}{2^{j}-2} }  \\
			&\leq B \cdot 2^{8j} \left(  B \cdot 2^{8(j-1)}\left(\sup_{s\in [0, t]}\int_{ \mathbb{R}^2  }|n|^{2^{j-1}}dxdy \right)^{\frac{2^{j}-2}{2^{j-1}-2} } \right)^{\frac{2^{j+1}-2}{2^{j}-2} } \\
            &= B^{1 + \frac{2^{j+1}-2}{2^{j}-2}} \cdot 2^{8j + 8(j-1)\frac{2^{j+1}-2}{2^{j}-2}} \left(\sup_{s\in [0, t]}\int_{ \mathbb{R}^2  }|n|^{2^{j-1}}dxdy \right)^{\frac{2^{j+1}-2}{2^{j}-2}\cdot\frac{2^{j}-2}{2^{j-1}-2}} \\
            &\leq B^{1 + \frac{2^{j+1}-2}{2^{j}-2}} \cdot 2^{8j + 8(j-1)\frac{2^{j+1}-2}{2^{j}-2}} \left(B \cdot 2^{8(j-2)}\left(\sup_{s\in [0, t]}\int_{ \mathbb{R}^2  }|n|^{2^{j-2}}dxdy \right)^{\frac{2^{j-1}-2}{2^{j-2}-2} } \right)^{\frac{2^{j+1}-2}{2^{j}-2}\cdot\frac{2^{j}-2}{2^{j-1}-2}}\\
            &= B^{1 + \frac{2^{j+1}-2}{2^{j}-2} + \frac{2^{j+1}-2}{2^{j}-2}\cdot\frac{2^{j}-2}{2^{j-1}-2} } \cdot 2^{8j + 8(j-1)\frac{2^{j+1}-2}{2^{j}-2}  + 8(j-2) {\frac{2^{j+1}-2}{2^{j}-2}\cdot\frac{2^{j}-2}{2^{j-1}-2}}} \\
            &\qquad \times \left(\sup_{s\in [0, t]}\int_{ \mathbb{R}^2  }|n|^{2^{j-2}}dxdy \right)^{ {\frac{2^{j+1}-2}{2^{j}-2}\cdot\frac{2^{j}-2}{2^{j-1}-2}}\cdot \frac{2^{j-1}-2}{2^{j-2}-2} }\\
            &\quad...\\
            &\leq B^{ (2^{j+1} -2)\Sigma_{i=3}^{j+1} \frac{1}{2^i -2 }}  2^{8 (2^{j+1} -2)\Sigma_{i=3}^{j+1} \frac{i-1}{2^i -2}}  \left(\sup_{s\in [0, t]}\int_{ \mathbb{R}^2  }|n|^{2^{2}}dxdy \right)^{\frac{2^{j+1}-2}{2^{2}-2}}, \alabel{Moser1}
		\end{align*}
       where we assumed the maximum was attained by the iterative term. Otherwise, let $j\geq j_0(j)\geq 3$ be the first step where the maximum is determined by the first term, i.e., 
        \begin{align*}
            &\quad\sup_{s\in [0, t]}\int_{ \mathbb{R}^2 }|n|^{2^{j+1}}dxdy \\
			&\leq B^{ (2^{j+1} -2)\Sigma_{i=j_0+1}^{j+1} \frac{1}{2^i -2 }}  2^{8 (2^{j+1} -2)\Sigma_{i=j_0+1}^{j+1} \frac{i-1}{2^i -2}}  \left(\sup_{s\in [0, t]}\int_{ \mathbb{R}^2  }|n|^{2^{j_0}}dxdy \right)^{\frac{2^{j+1}-2}{2^{j_0}-2}} \\
            &\leq B^{ (2^{j+1} -2)\Sigma_{i=j_0+1}^{j+1} \frac{1}{2^i -2 }}  2^{8 (2^{j+1} -2)\Sigma_{i=j_0+1}^{j+1} \frac{i-1}{2^i -2}}  \left( B D^{2\cdot2^{j_0 -1}  -2 } \right)^{\frac{2^{j+1}-2}{2^{j_0}-2}} \\
            &\leq B^{ (2^{j+1} -2)\Sigma_{i=3}^{j+1} \frac{1}{2^i -2 }}  2^{8 (2^{j+1} -2)\Sigma_{i=3}^{j+1} \frac{i-1}{2^i -2}}  D^{2^{j+1}-2}.  \alabel{Moser2}
        \end{align*}
        In this case, one can find that the bound is already established, and the subsequent iteration steps are unnecessary.
        
        Direct calculations show
       \begin{align*}
        \sum_{i=3}^{\infty} \frac{1}{2^i -2 } \leq \sum_{i=1}^{\infty} \frac{1}{2^i  } \leq 1, \quad
        \sum_{i=3}^{\infty} \frac{i-1}{2^i -2 } \leq \sum_{i=1}^{\infty} \frac{i}{2^i  } \leq 2.
       \end{align*}
       Then, raising both sides of \eqref{Moser1} and \eqref{Moser2} to the power of $\frac{1}{2^{j+1}}$, and letting  $j\rightarrow+\infty$, we obtain
       \begin{align*}
           \| n \|_{L^\infty L^\infty} &\leq 2^{16} B \max\{\| n\|_{L^\infty L^4}^2 , D \}\\
           &\leq C\lt(|\kappa|^3\|{n}\|_{L^{\infty}L^{2}}^3+\mu^4 \|\nabla c\|_{L^{\infty}L^{4}}^4  + \kappa^2 + 1\rt)\lt( \| n\|_{L^\infty L^4}^2 +  \Vert{n_{\mathrm{in}}}\Vert_{L^{\infty}} +1\rt),
       \end{align*}
       which, combined with \eqref{eq:L^infty L^4} and \eqref{eq:L^infty L^4 2}, completes the proof.
    \end{proof}

\subsection{The energy estimate of $\omega$.} 
    In this subsection, we give the energy estimate of $\omega$.
    
    \begin{Lem} \label{lem:est of omega}
    	For any $0<\epsilon<1/2<m$, $0<a<\frac{1}{16(1+2 \pi)}$ and $0\leq t \leq T$, it holds that 
    	\begin{equation*}\begin{aligned}
    			\|  \langle D_x\rangle^m  \langle \frac{1}{D_x}\rangle^\epsilon   \omega  \|_{X_a}^2 &\lesssim   \|  \langle D_x\rangle^m  \langle \frac{1}{D_x}\rangle^\epsilon   \omega_{\mathrm{in}} \|_{L^2}^2 + \frac{1}{A^\frac12}  \|  \langle D_x\rangle^m  \langle \frac{1}{D_x}\rangle^\epsilon   \omega \|_{X_a}^3  \\
    			&\qquad + \frac{1}{A^\frac23} \|  \langle D_x\rangle^m  \langle \frac{1}{D_x}\rangle^\epsilon   \omega \|_{X_a}   \|  \langle D_x\rangle^{m+\frac13}  \langle \frac{1}{D_x}\rangle^\epsilon   n \|_{X_a}.
    	\end{aligned}\end{equation*}
    \end{Lem}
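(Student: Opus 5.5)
The plan is to apply Proposition \ref{lem:est of f} to the vorticity equation $\eqref{eq:main}_3$, which has the form \eqref{eq:f} with $f=\omega$ and $g=-\frac1A\partial_x n$. This gives at once
\begin{align*}
\|\langle D_x\rangle^m\langle\tfrac{1}{D_x}\rangle^\epsilon\omega\|_{X_a}^2
&\lesssim \|\langle D_x\rangle^m\langle\tfrac{1}{D_x}\rangle^\epsilon\omega_{\rm in}\|_{L^2}^2
+\frac1{A^{\frac12}}\|\langle D_x\rangle^m\langle\tfrac{1}{D_x}\rangle^\epsilon\omega\|_{X_a}^3\\
&\quad+\frac1A\int_0^t\Big|\Re\Big(\partial_x n\,\Big|\,\mathcal{M}e^{2aA^{-\frac13}|D_x|^{\frac23}t}\langle D_x\rangle^{2m}\langle\tfrac1{D_x}\rangle^{2\epsilon}\omega\Big)\Big|\,dt .
\end{align*}
The first two terms are already the first two terms of the claim, and the transport nonlinearity $u\cdot\nabla\omega$ is absorbed entirely by the proposition. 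The remaining work is therefore only the buoyancy term.

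For the buoyancy term I would pass to the Fourier side. Since $\mathcal{M}$ is a bounded real multiplier by \eqref{eq:bound of M}, the integrand is controlled by
\[
\int |k|\,e^{2aA^{-\frac13}|k|^{\frac23}t}\langle k\rangle^{2m}\langle\tfrac1k\rangle^{2\epsilon}|\hat n||\hat\omega|\,dk\,d\xi .
\]
I would then split $|k|=|k|^{\frac23}\cdot|k|^{\frac13}$, putting $|k|^{\frac23}$ on $n$ and $|k|^{\frac13}$ on $\omega$. This is the derivative transfer mentioned in the remarks (cf. \eqref{eq:px n omega}).

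Applying Cauchy--Schwarz in space and time bounds the integral by
\[
\frac1A\big\|e^{aA^{-\frac13}|D_x|^{\frac23}t}\langle D_x\rangle^m\langle\tfrac1{D_x}\rangle^\epsilon|D_x|^{\frac23}n\big\|_{L^2L^2}\,\big\|e^{aA^{-\frac13}|D_x|^{\frac23}t}\langle D_x\rangle^m\langle\tfrac1{D_x}\rangle^\epsilon|D_x|^{\frac13}\omega\big\|_{L^2L^2}.
\]
The $\omega$ factor is at most $A^{\frac16}\|\langle D_x\rangle^m\langle\frac1{D_x}\rangle^\epsilon\omega\|_{X_a}$, using the enhanced-dissipation part of the $X_a$ norm. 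For the $n$ factor, I write $|D_x|^{\frac23}=|D_x|^{\frac13}\cdot|D_x|^{\frac13}$ and use $|k|^{\frac13}\le\langle k\rangle^{\frac13}$ on one of them. This bounds it by $A^{\frac16}\|\langle D_x\rangle^{m+\frac13}\langle\frac1{D_x}\rangle^\epsilon n\|_{X_a}$, again through the $A^{-\frac13}\||D_x|^{\frac13}\cdot\|_{L^2L^2}^2$ component.

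Multiplying the factors gives $A^{-1}\cdot A^{\frac16}\cdot A^{\frac16}=A^{-\frac23}$, which is exactly the last term in the statement.

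The only delicate point is the choice of splitting. Placing the full $|k|$ on $n$ alone would cost more powers of $A$ or require the $\partial_x\nabla\Delta^{-1}$ component, which is ill-suited here. The symmetric $\frac13$--$\frac13$ use of enhanced dissipation, together with the extra $\langle D_x\rangle^{\frac13}$ regularity built into the norm of $n$, is what produces the favorable $A^{-\frac23}$. The low frequencies cause no problem, because $|k|^{\frac13}$ only helps there and the weights $\langle\frac1k\rangle^{\epsilon}$ match on both sides.
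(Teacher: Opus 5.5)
Your proposal is correct and matches the paper's proof essentially step for step. Like the paper, you apply Proposition~\ref{lem:est of f} to the vorticity equation with $g=-\frac1A\partial_x n$. You then split $|k|=|k|^{\frac13}|k|^{\frac23}$ and bound both factors through the enhanced-dissipation component of the $X_a$ norm, putting the surplus $|D_x|^{\frac13}$ on $n$ into the $\langle D_x\rangle^{m+\frac13}$ weight, which gives $A^{-1}\cdot A^{\frac16}\cdot A^{\frac16}=A^{-\frac23}$.
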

    \begin{proof}
    	Applying Proposition \ref{lem:est of f} to $\eqref{eq:main}_3$, we have
    	\begin{equation}\begin{aligned} \label{eq:est of omega1}
    			\|  \langle D_x\rangle^m  \langle \frac{1}{D_x}\rangle^\epsilon   \omega  \|_{X_a}^2 &\lesssim   \|  \langle D_x\rangle^m  \langle \frac{1}{D_x}\rangle^\epsilon   \omega_{\mathrm{in}} \|_{L^2}^2 + \frac{1}{A^\frac12}  \|  \langle D_x\rangle^m  \langle \frac{1}{D_x}\rangle^\epsilon   \omega \|_{X_a}^3  \\
    			&\qquad + \frac1A \int_{0}^{t}  \left|\Re\left(\px n \left\lvert\, \mathcal{M} e^{2 a A^{-\frac{1}{3}}\left|D_x\right|^{\frac{2}{3} }t }\langle D_x\rangle^{2 m}\langle\frac{1}{D_x}\rangle^{2 \epsilon} \omega\right.\right)\right|  dt.
    	\end{aligned}\end{equation}
    	Moreover, inspired by the handling of the buoyancy term $\partial_x \theta$ in \cite{CWY2025a}, we infer that
    	\begin{equation}\begin{aligned} \label{eq:px n omega}
    			&\frac1A \int_{0}^{t}  \left|\Re\left(\px n \left\lvert\, \mathcal{M} e^{2 a A^{-\frac{1}{3}}\left|D_x\right|^{\frac{2}{3} }t }\langle D_x\rangle^{2 m}\langle\frac{1}{D_x}\rangle^{2 \epsilon} \omega\right.\right)\right|  dt \\
    			\lesssim & \frac1A \|e^{a A^{-\frac{1}{3}}\left|D_x\right|^{\frac{2}{3} } t }  \langle D_x\rangle^m\langle\frac{1}{D_x}\rangle^\epsilon |D_x|^\frac13 \omega\|_{L^2 L^2}  \|e^{a A^{-\frac{1}{3}}\left|D_x\right|^{\frac{2}{3} } t }  \langle D_x\rangle^m\langle\frac{1}{D_x}\rangle^\epsilon |D_x|^\frac23 n\|_{L^2 L^2} \\
    			\lesssim & \frac{1}{A^\frac23} \|  \langle D_x\rangle^m  \langle \frac{1}{D_x}\rangle^\epsilon   \omega \|_{X_a}   \|  \langle D_x\rangle^{m+\frac13}  \langle \frac{1}{D_x}\rangle^\epsilon   n \|_{X_a}.
    	\end{aligned}\end{equation}
    	Combining \eqref{eq:px n omega} with \eqref{eq:est of omega1}, the proof is complete.
    \end{proof}
    
    \subsection{Closing the energy}
    With the a priori estimates for $n$ and $\omega$ at hand, we can let $A$ sufficiently large so that the energy can be closed as follows.
    
    \begin{proof}[Proof of Proposition \ref{main prop}]
    	By \eqref{eq:bootstap}, Lemma \ref{lem:est of n} and Lemma \ref{lem:est of omega}, we get for $0<t\leq T$ that
    	\begin{align*} 
    			E(t)^2 &\leq C \|  \langle D_x\rangle^m  \langle \frac{1}{D_x}\rangle^\epsilon   \omega_{\mathrm{in}} \|_{L^2}^2 + \frac{C}{A^\frac12}  \|  \langle D_x\rangle^m  \langle \frac{1}{D_x}\rangle^\epsilon   \omega \|_{X_a}^3   \\
    			&\quad + \frac{C}{A^\frac23} \|  \langle D_x\rangle^m  \langle \frac{1}{D_x}\rangle^\epsilon   \omega \|_{X_a}   \|  \langle D_x\rangle^{m+\frac13}  \langle \frac{1}{D_x}\rangle^\epsilon   n \|_{X_a}  \\
    			&\quad +C \|  \langle D_x\rangle^{m +\frac13}  \langle \frac{1}{D_x}\rangle^\epsilon   n_{\mathrm{in}} \|_{L^2}^2  + \|  \langle D_x\rangle^{m+\frac13}  \langle \frac{1}{D_x}\rangle^\epsilon   n \|_{X_a}^2  \\
    			&\quad \quad \times \lt( \frac{C}{A^\frac12}  \|  \langle D_x\rangle^m  \langle \frac{1}{D_x}\rangle^\epsilon   \omega \|_{X_a}  +  \frac{C|\mu|}{A^\frac13} \|  \langle D_x\rangle^{m+\frac13}   \langle \frac{1}{D_x}\rangle^\epsilon   n \|_{X_a}  +  \frac{C|\kappa|}{A^\frac16} \|  \langle D_x\rangle^{m+\frac13}   \langle \frac{1}{D_x}\rangle^\epsilon   n \|_{X_a}^2  \rt)\\
    			&\leq C\lt(\|  \langle D_x\rangle^m  \langle \frac{1}{D_x}\rangle^\epsilon   \omega_{\mathrm{in}} \|_{L^2}^2  +  \|  \langle D_x\rangle^{m +\frac13}  \langle \frac{1}{D_x}\rangle^\epsilon   n_{\mathrm{in}} \|_{L^2}^2  \rt) \\
    			&\quad +  C\lt[\frac{(2K)^3}{A^\frac12}  +  \frac{(2K)^2}{A^\frac23}  + \frac{(2K)^3}{A^\frac12} + \frac{|\mu|(2K)^3}{A^\frac13}   + \frac{|\kappa|(2K)^4}{A^\frac16} \rt].\alabel{eq:temp1}
    	\end{align*}
    	Choose  $\bar{A}_1 = \bar{A}_1(K, \mu, \kappa):= C \lt(|\kappa|^6K^{24} + |\mu|^3K^9 + K^6\rt) $ large enough such that if $A> \bar{A}_1$, it holds that
    	\begin{equation}\begin{aligned} \label{eq:bar A1}
    			C\lt[\frac{(2K)^3}{A^\frac12}  +  \frac{(2K)^2}{A^\frac23}  + \frac{(2K)^3}{A^\frac12} + \frac{|\mu|(2K)^3}{A^\frac13} + \frac{|\kappa|(2K)^4}{A^\frac16} \rt] \leq 1.
    	\end{aligned}\end{equation}
    	Now we denote 
        $$
        K:= \sqrt{C\lt(\|  \langle D_x\rangle^m  \langle \frac{1}{D_x}\rangle^\epsilon   \omega_{\mathrm{in}} \|_{L^2}^2  +  \|  \langle D_x\rangle^{m +\frac13}  \langle \frac{1}{D_x}\rangle^\epsilon   n_{\mathrm{in}} \|_{L^2}^2  \rt) + 1}.
        $$
        Then, by Lemma \ref{lem:n L infty L infty}, Lemma \ref{lem:Elliptic estimate}, \eqref{eq:temp1} and \eqref{eq:bar A1}, we have for $\alpha>0$ that
    	\begin{align*}
           E_{\infty}(t)
           &\leq C\lt(|\kappa|^3\|{n}\|_{L^{\infty}L^{2}}^3+\mu^4 \|\nabla c\|_{L^{\infty}L^{4}}^4  + \kappa^2 + 1\rt)\\
           &\qquad\times \lt[ \|n_{\mathrm{in}}\|_{L^\infty}^2 +  \|n \|_{L^\infty L^2}^2 + \frac{1}{A^\frac13} \lt( |\kappa| \|n\|_{L^\infty L^\infty}^2 + |\mu| \|n\|_{L^\infty L^\infty} \rt)  \| \langle D_x \rangle^m \langle \frac{1}{D_x} \rangle^\epsilon n \|_{X_a}^2  +1\rt]\\
           &\leq C\lt(|\kappa|^3\|{n}\|_{X_a}^3+ \alpha^{-1}\mu^4 \|n\|_{X_a}^4   + \kappa^2 + 1\rt)\\
           &\qquad\times \lt[ \|n_{\mathrm{in}}\|_{L^\infty}^2 +  \|n \|_{X_a}^2 + \frac{1}{A^\frac13}  \lt( |\kappa| \|n\|_{L^\infty L^\infty}^2 + |\mu| \|n\|_{L^\infty L^\infty}\rt)    \| \langle D_x \rangle^m \langle \frac{1}{D_x} \rangle^\epsilon n \|_{X_a}^2  +1\rt]\\
           &\leq C\lt(|\kappa|^3K^3+ \mu^4 K^4 + \kappa^2 + 1\rt)  \lt( \|n_{\mathrm{in}}\|_{L^\infty}^2 +  K^2 + \frac{  |\kappa|K_\infty^2 + |\mu| K_\infty }{A^\frac13} K^2 +1\rt),
       \end{align*}
    and for $\alpha=0$ that
    \begin{align*}
           E_{\infty}(t)
           &\leq C\lt(|\kappa|^3\|{n}\|_{L^{\infty}L^{2}}^3+\mu^4 \|\nabla c\|_{L^{\infty}L^{4}}^4  + \kappa^2 + 1\rt)\\
           &\qquad\times \lt[ \|n_{\mathrm{in}}\|_{L^\infty}^2 +  \|n \|_{L^\infty L^2}^2 + \frac{1}{A^\frac16} \lt( |\kappa| \|n\|_{L^\infty L^\infty}^2 + |\mu| \|n\|_{L^\infty L^\infty} \rt)^{\frac12}  \| \langle D_x \rangle^m \langle \frac{1}{D_x} \rangle^\epsilon n \|_{X_a}^2  +1\rt]\\
           &\leq C\lt(|\kappa|^3\|{n}\|_{X_a}^3+ \mu^4 \|n\|_{X_a}^4+\mu^4 M^4   + \kappa^2 + 1\rt)\\
           &\qquad\times \lt[ \|n_{\mathrm{in}}\|_{L^\infty}^2 +  \|n \|_{X_a}^2 + \frac{1}{A^\frac16}  \lt( |\kappa| \|n\|_{L^\infty L^\infty}^2 + |\mu| \|n\|_{L^\infty L^\infty}\rt) ^{\frac12}  \| \langle D_x \rangle^m \langle \frac{1}{D_x} \rangle^\epsilon n \|_{X_a}^2  +1\rt]\\
           &\leq C\lt(|\kappa|^3K^3+ \mu^4 K^4 + \mu^4 M^4 + \kappa^2 + 1\rt)  \lt[\|n_{\mathrm{in}}\|_{L^\infty}^2 +  K^2 + \lt(\frac{  |\kappa|K_\infty^2 + |\mu| K_\infty }{A^\frac13}\rt)^{\frac12} K^2 +1\rt].
       \end{align*}
    	Choose  $\bar{A}_2 = \bar{A}_2(K_\infty, \mu, \kappa):=  C\lt(|\kappa|^3 K_\infty^{6} + |\mu|^3 K_\infty^3 \rt)$ large enough such that if $A> \bar{A}_2$, it holds that
    	\begin{equation*}\begin{aligned}
    			\frac{ |\kappa|K_\infty^2 + |\mu| K_\infty }{A^\frac13} \leq 1.
    	\end{aligned}\end{equation*}
        Then, we define
        \begin{align*} 
    		K_\infty:= \left\{\begin{array}{l}
        C \lt(|\kappa|^3K^3+ \mu^4 K^4 + \kappa^2 + 1\rt)  \lt( \|n_{\mathrm{in}}\|_{L^\infty}^2 +  K^2  +1\rt) ,\, \text{if} \,\, \alpha>0 ;\\
        C \lt(|\kappa|^3K^3+ \mu^4 K^4 + \mu^4 M^4 + \kappa^2 + 1\rt)  \lt( \|n_{\mathrm{in}}\|_{L^\infty}^2 +  K^2  +1\rt) ,\, \text{if} \,\, \alpha=0.
        \end{array}\right.
        \end{align*}
        Let $\bar{A} = \max\lt\{\bar{A}_1, \bar{A}_2\rt\},$ and then the proof is complete.
    \end{proof}

    \section*{Declarations}
    \begin{itemize}
    \item \textbf{Acknowledgements} W. Wang was supported by National Key R\&D Program of China (No. 2023YFA1009200) and NSFC under grant 12471219.
    \item \textbf{Conflict of interest} The authors declare that they have no conflict of interest.
    \item \textbf{Data Availability} Data sharing is not applicable to this article as no datasets were generated or analyzed during the current study.
    \end{itemize}

    
    \bibliographystyle{amsalpha}
    \bibliography{RRNNrefs}
\end{document}